\documentclass[12pt]{article}

\AtBeginDocument{}

\usepackage{amsmath}
\usepackage{amssymb}
\usepackage{titling}
\usepackage[margin= 1in]{geometry}
\usepackage{graphicx}
\usepackage{mathtools}
\usepackage{graphicx}
\usepackage{xcolor}
\usepackage{amsthm}
\usepackage[utf8]{inputenc}
\usepackage[english]{babel}
\usepackage{dsfont}
\usepackage[all,cmtip]{xy}
\usepackage{subcaption}
\usepackage{float}
\usepackage{framed,enumitem}
\usepackage[nodisplayskipstretch]{setspace}
\usepackage{array}
\usepackage{tabularray}
\usepackage[export]{adjustbox}
\usepackage{multirow}
\usepackage{pdfpages}
\usepackage{comment}

\newtheorem{theorem}{Theorem}[section]

\newtheorem{lemma}[theorem]{Lemma}
\newtheorem{proposition}[theorem]{Proposition}
\theoremstyle{definition}
\newtheorem{remark}{Remark}[]

\begin{document}

\title{Skeletal Snub Polyhedra in Ordinary Space, I} 
 
\author{
Egon Schulte
\thanks{Supported by the Simons Foundation Award No. 420718}\\
Northeastern University\\
Boston, Massachusetts,  USA, 02115
\and and \\[.05in]
Tom\'a\v{s} Sk\'acel\\
Northeastern University\\
Boston, Massachusetts,  USA, 02115}

\date{ \today } 
\maketitle 

\begin{center}
\textit{In memory of Chris King, our friend, colleague, and teacher.}
\end{center}
\bigskip

\begin{abstract}
Skeletal polyhedra are discrete connected structures consisting of finite (planar or skew) or infinite (linear, planar, or spatial) polygons as faces, with two faces on each edge and a circular vertex figure at each vertex. The present paper describes the blueprint for the snub construction and shows that it can be applied to both regular and chiral skeletal polyhedra in ordinary space. The resulting skeletal snub polyhedra are vertex-transitive and highly locally symmetric. Their properties - from a combinatorial, topological, and geometric perspective - are described and illustrated on some particularly interesting examples. We examine when the construction yields uniform skeletal polyhedra and discuss the completeness of our list of generated structures.   
\bigskip\medskip

\noindent
Key Words:  Uniform polyhedron, Archimedean solid, snub polyhedra, regular polyhedron, maps on surfaces, Wythoff's construction, truncation. 
\medskip

\noindent
MSC Subject Classification (2020): Primary: 51M20. Secondary: 52B15.  
\end{abstract}




\section{Introduction}
\label{intro}

The exploration of polyhedra in ordinary Euclidean 3-space has a long and varied history, with roots dating back to ancient Greece. Throughout the centuries, the concept of what constitutes a polyhedron has evolved, giving rise to new classes of highly symmetrical structures, such as the five Platonic solids, the Kepler-Poinsot polyhedra, the Petrie-Coxeter polyhedra, and the Grünbaum-Dress polyhedra (see \cite{5,7,18,19,22}). The various classes of polyhedra mentioned can be seen as representing the development of the concept of a polyhedron, starting with the examination of convex solids, progressing to a topological and algebraic approach that analyzed polyhedra as maps on surfaces, and culminating in more recent graph-theoretical approaches that emphasize the combinatorial incidence structure of skeletal figures in three-dimensional space.\smallskip

The theory of skeletal polyhedra received an important impetus from Grünbaum when he relaxed the constraint that membranes must span the faces of a polyhedron and allowed skew and even infinite polygons to occur as faces (see \cite{22}). This new approach gave rise to the Grünbaum-Dress polyhedra, pushing the number of regular polyhedra to 48. The enumeration of regular skeletal polyhedra was presented by Grünbaum \cite{22} and Dress \cite{18,19}. This classification received more attention later when McMullen and Schulte presented a simpler approach to the classification \cite{40,41}. Naturally, the study of highly symmetric non-regular skeletal polyhedra followed, and the chiral skeletal polyhedra were classified in \cite{Chiral 1, Chiral 2}. Further work related to geometric chiral polyhedra can be found in Pellicer and Weiss \cite{47}. More recently, certain classes of skeletal polyhedra with few flag orbits have been classified: the finite polyhedra with 3 flag orbits were enumerated in Cunningham and Pellicer \cite{CuPe,CuPe2}, and certain types of polyhedra with 2 flag orbits (other than chiral polyhedra) were classified by Pellicer and Williams \cite{PeWi} (see also the related work in Hubard \cite{Hu}, and Cutler and Schulte \cite{14}).
\smallskip

The present paper and its companion \cite{Sk2} (as well as \cite{Sk1}) by the second author were inspired by the desire to achieve a more profound understanding of the uniform skeletal polyhedra in ordinary Euclidean 3-space, through building on the line of investigation begun in Schulte and Williams~\cite{Wythoffians} and in~\cite{Abby Thesis}. These skeletal polyhedra, by definition, have a vertex-transitive symmetry group and regular faces, and serve as skeletal equivalents to the Archimedean solids (and prisms and antiprisms). Coxeter, Longuet-Higgins, and Miller published a comprehensive list of finite, convex or non-convex, uniform polyhedra with planar (convex or star-polygon) faces in \cite{11}, but the completeness of this list was only proved later, independently, by Skilling \cite{55} and Har'El \cite{30}. Uniform skeletal polyhedra have not been classified, in fact, not even the finite uniform skeletal polyhedra have been classified.  \smallskip

Our approach exploits a ``skeletal" variant of Wythoff's construction (see \cite{7,41}) and generates highly locally symmetric ``snub" polyhedra, skeletal variants of the classical snub polyhedra, from orbits of initial points (initial vertices) under the subgroup of the symmetry groups of regular or chiral polyhedra consisting of all  combinatorial rotations (viewed as geometric symmetries). The method generates a large collection of novel snub polyhedra from the 48 regular polyhedra and from the 6 infinite families of chiral polyhedra.  These newly generated objects are often uniform skeletal polyhedra, that is, in addition to a vertex-transitive symmetry group they also have regular faces. Famous examples are the snub cube and snub dodecahedron. Our work contributes considerably to the growing incomplete list of uniform skeletal polyhedra, adding further to the list of \cite{Wythoffians,Abby Thesis}. \smallskip

The paper is organized as follows. In Sections~\ref{skelpo} and \ref{abpo}, we begin with a review of basic concepts about skeletal polyhedra and abstract polyhedra. Section~\ref{regpo} revisits the classification of skeletal regular polyhedra in 3-space. Then, Section~\ref{snub} describes in detail the main construction and explores the geometric, combinatorial, and topological properties of the generated structures. In Section~\ref{completenessarguments}, we investigate a key question, a converse, relevant for a completeness proof of uniform polyhedra of snub type: which polyhedra of snub type, $p.3.3.q.3$, are snub polyhedra of regular or chiral polyhedra? We provide strong partial answers to this question. Finally, in Sections~\ref{secgplus}, \ref{finite uniform polyhedra}, and \ref{new finite uniform polyhedra}, we illustrate examples of uniform and non-uniform snub polyhedra derived from the 18 finite regular polyhedra. A full analysis of the snub polyhedra associated with all 48 regular polyhedra is carried out in the companion paper \cite{Sk2} and produces a wealth of new uniform examples (see also \cite{Sk1}). The snub polyhedra derived from chiral polyhedra have not yet been fully investigated. 
\smallskip

\section{Skeletal Polyhedra}
\label{skelpo}

In this section, we give a brief overview of geometric polyhedra in $\mathbb{E}^3$ as described in \cite{22} and \cite[Ch.~7]{41}. We will use the term geometric and skeletal interchangeably. \smallskip

Informally speaking, a geometric polyhedron consists of a set of vertices, edges, and faces, all fitting together in a way that is consistent with the combinatorial structure of convex polyhedra. A vertex of a geometric polyhedron is simply a point in space. An edge of a geometric polyhedron, denoted by $\{u,v\}$, is the closed line segment between two distinct vertices $u$ and $v$. A face of a geometric polyhedron is either a finite or infinite polygon, which we define below. \smallskip

A \textit{finite polygon}, or simply an \textit{$n$-gon}, $(v_1,v_2,...,v_n)$ in ordinary Euclidean space $\mathbb{E}^3$ is a figure formed by distinct points $v_1,...,v_n$, together with the line segments $\{v_i,v_{i+1}\}$ for $i=1,\dots,n-1$ and $\{v_n,v_1\}$. Similarly, an \textit{infinite polygon}, or \textit{apeirogon}, consists of an infinite sequence of distinct points $(\dots,v_{-2},v_{-1},v_0,v_1,v_2,\dots)$ and of the line segments $\{v_i,v_{i+1}\}$ for each $i$, such that each compact subset of $\mathbb{E}^3$ meets only finitely many line segments. In either case the points are the \textit{vertices} and the line segments are the \textit{edges} of the polygon. \smallskip

We say that a polygon is \textit{geometrically regular} if its symmetry group acts transitively on the flags of this polygon. By a \textit{flag} of the polygon we mean a 2-element set consisting of a vertex and an incident edge. The planar regular polygons comprise the familiar regular convex and star-polygons, as well as the regular zigzags and linear apeirogons. The spatial, non-planar  regular polygons comprise the finite skew (prismatic or anti-prismatic) regular polygons and the helical regular polygons. For more on regular polygons, see \cite{7,22}.  \smallskip

General polygons in $\mathbb{E}^3$ can of course be considerably more complicated than regular polygons. However, apart from triangles and some quadrangles and pentagons, all polygons in this paper are regular polygons.\smallskip 

A \textit{geometric} (or \textit{skeletal}) \textit{polyhedron}, or simply \textit{polyhedron} (if the context is clear), $P$ in $\mathbb{E}^3$ consists of a set of distinct points, called \textit{vertices}, a set of line segments, called \textit{edges}, and a set of polygons, called \textit{faces}, such that the following properties are satisfied: 
\noindent
\begin{itemize}
\item[(a)] The graph defined by the vertices and edges of $P$, called the \textit{edge graph} of $P$, is connected.
    
\item[(b)] The vertex figure of $P$ at each vertex of $P$ is connected. By the \textit{vertex figure} of $P$ at a vertex $v$ we mean the graph whose vertices are the neighbors of $v$ in the edge graph of $P$ and whose edges are the line segments $\{u,w\}$, where $\{u,v\}$ and $\{v,w\}$ are adjacent edges of a common face of $P$. 
    
\item[(c)] Each edge of $P$ is contained in exactly two faces of $P$. 
    
\item[(d)] $P$ is \textit{discrete}, meaning that each compact subset of $\mathbb{E}^3$ meets only finitely many faces of $P$. 
\end{itemize}

If $P$ is a geometric polyhedron, then the set of all vertices, edges, and faces of $P$ is a partially ordered set, where the partial order is induced by incidence. This is an abstract polyhedron (if suitable ``improper" elements of ranks $-1$ and $3$ are added) and often is a lattice.
\smallskip

A \textit{flag} of a geometric polyhedron $P$ is a 3-element set containing a vertex, an edge, and a face of $P$, all mutually incident. We say that two flags of $P$ are \textit{adjacent} if they differ in precisely one element. \smallskip

We call a geometric polyhedron $P$ in $\mathbb{E}^3$ (geometrically) \textit{regular} if its symmetry group $G(P)$ acts transitively on the flags of $P$. Thus, if $P$ is regular, then $G(P)$ is transitive, separately, on the vertices, edges, and faces of $P$. Furthermore, the faces and vertex figures of $P$ must be regular polygons. The faces of $P$ can take finite (convex, star, or skew) or infinite (zig-zag or helical) forms. Linear apeirogons do not occur as faces of geometrically regular polyhedra. We say that $P$ is of type $\{p,q\}$ if the faces of $P$ are $p$-gons and the vertex figures are $q$-gons, allowing $p=\infty$; note that $q$ is finite, by the discreteness condition~(d) above.\smallskip

Suppose $P$ is a (geometric) regular polyhedron, and let $\Phi := \{F_{0},F_{1},F_{2}\}$ be a {\em base\/} flag of $P$. Then $G(P)$ is generated by {\em distinguished generators\/} $r_{0},r_{1},r_{2}$ ({\em with respect to\/}~$\Phi$), where $r_{j}$ is the unique symmetry of $P$ which fixes all elements of $\Phi$ but the vertex if $j=0$, the edge if $j=1$, or the face if $j=2$. These generators satisfy the standard Coxeter-type relations for a string Coxeter diagram with three nodes,
\begin{equation} 
\label{symgreg}
r_{0}^2 = r_{1}^2 = r_{2}^2 = 
(r_{0}r_{1})^{p} = (r_{1}r_{2})^{q} =
(r_{0}r_{2})^{2} = 1,
\end{equation}
where $\{p,q\}$ is the type of $P$ (when $p=\infty$ the corresponding relation is omitted); in general there are other independent relations too. As involutory isometries, the generators $r_j$ must be reflections in points, lines, or planes. Note that, in a natural way, the symmetry group of the face $F_2$ in $\Phi$ is $\langle r_{0},r_{1}\rangle$, while that of the vertex figure  at $F_0$ is $\langle r_1,r_2\rangle$. 
\smallskip

We define three special symmetries of $P$, 
\[s_1:=r_{0}r_{1},\, s_2:= r_{1}r_2,\, s_0:= r_{0}r_2 =s_{1}s_{2},\] 
and set
\[G^{+}(P):=\langle s_1,s_2 \rangle.\]
Then observe that the relations in (\ref{symgreg}) imply 
$$s_1^p = s_2^q = (s_1 s_2)^2 = 1.$$ 
We refer to the subgroup $G^{+}(P)$ of $G(P)$ as the \textit{combinatorial rotation subgroup} of $G(P)$, and note that $G^{+}(P)$ has index at most 2 in $G(P)$ and can be generated by any two of the \textit{distinguished combinatorial rotations} $s_1,s_2,s_0$. The generator $s_1$ cycles through (or shifts) the vertices of $F_2$, and $s_2$ cycles through the vertices of the vertex figure at $F_0$. However, it is important to keep in mind that the elements of $G^+(P)$, and thus $s_1,s_2$, need not be direct isometries of $\mathbb{E}^3$, that is, geometric rotations, translations, or screw rotations. We say that $P$ is {\it directly regular\/} if $G^+(P)$ has index 2 in $G(P)$. 

A geometric polyhedron $P$ in $\mathbb{E}^3$ is called (geometrically) \textit{chiral} if its symmetry group $G(P)$ has two orbits on the flags such that adjacent flags are in distinct orbits. A geometrically chiral polyhedron $P$ has regular polygons as faces and vertex figures, and its symmetry group $G(P)$ acts transitively, separately, on the vertices, edges, and faces. Both polyhedra with finite faces and polyhedra with infinite faces occur. The symmetry group $G(P)$ has a pair of generators $S_1,S_2$ (with respect to a base flag $\Phi$) with properties similar to the generators $s_1,s_2$ for the combinatorial rotation subgroup of a regular polyhedron, but in this case there are no symmetries corresponding to the reflections $r_j$. 
For a chiral polyhedron we set
\[G^{+}(P):=G(P)\]
and again define three special symmetries, 
\[s_1:=S_1,\, s_2:= S_2,\, s_0:=s_{1}s_{2}=S_1S_2.\] 
Thus the symmetry group of a chiral polyhedron coincides with its combinatorial rotation subgroup, $G^+(P)$. Changing the base flag to an adjacent flag of $\Phi$ results in new distinguished generators of $G(P)$ that are not conjugate in $G(P)$ to those determined by $\Phi$ itself, that is, $S_1,S_2$; the two sets of generators represent the two ``enantiomorphic" (mirror image) forms of a chiral polyhedron, in a sense, a left form and a right form. The chiral polyhedra in $\mathbb{E}^3$ were classified in \cite{Chiral 1,Chiral 2}.\smallskip

If a geometric polyhedron $P$ is \textit{vertex transitive}, meaning that its symmetry group $G(P)$ acts transitively on the vertex set of $P$, we can define the \textit{vertex symbol} of $P$ as the sequence of numbers of vertices of the polygonal faces surrounding, in cyclic order, each vertex of $P$, allowing $\infty$. This symbol is unique up to cyclic permutation and reversal of orientation. So, the vertex symbol $p.q.r.s$ means that each vertex of $P$ is surrounded by a $p$-gon, $q$-gon, $r$-gon, and $s$-gon, in that order. A vertex transitive  polyhedron is said to be of \textit{snub type}, or has a \textit{vertex symbol of snub type}, $p.3.3.q.3$, if each vertex is surrounded, in cyclic order and up to reversal of orientation, by a $p$-gon, two triangles, a $q$-gon, and another triangle; we allow the possibility that $p=3$, $q=3$, or $p=q$.
\smallskip

It is often useful to encode additional information about the geometric structure of the polygons present in the vertex symbol of a vertex transitive polyhedron $P$. If $p > 3$ is finite, then in the vertex symbol of $P$ we record a regular convex $p$-gon, a regular star $p$-gon (with density $d$), or a regular skew $p$-gon as $p_c$, $\frac{p}{d}$, or $p_s$, respectively. Triangles are convex, so no subscript on $p$ is needed when $p=3$. Similarly, if $p=\infty$, then in the vertex symbol of $P$ we denote a regular linear apeirogon, a regular zig-zag, or a regular helical polygon over a base $b$-gon by $\infty_1$, $\infty_2$, or $\infty_b$, respectively. In this paper, we frequently encounter triangles in the vertex symbol, both regular and non-regular. \smallskip 

Finally, we discuss geometric polyhedra which are skeletal analogs of the Archimedean solids and are neither regular nor chiral, but are still highly symmetric. We say that a geometric polyhedron $P$ in $\mathbb{E}^3$ is \textit{uniform} if the faces of $P$ are regular polygons and the symmetry group of $P$ acts transitively on the vertex set of $P$. All regular or chiral polyhedra in $\mathbb{E}^3$ are uniform. 
\smallskip

\section{Abstract Polyhedra}
\label{abpo}

In this section, we briefly review basic notions and results about abstract polyhedra following~\cite{41}. Topologically, abstract polyhedra are maps on surfaces (if they are locally finite) \cite{Conder}. Geometric polyhedra are often viewed as realizations of abstract polyhedra. 
\smallskip
 
An {\em abstract polyhedron\/} is a partially ordered set $\mathcal{P}$ with a strictly monotone {\em rank\/} function with range $\{-1,0,1,2,3\}$. Its elements are called faces. (As is common for abstract polytopes, in this section, the term ``face" is used in a more general sense than in the geometric context above where it meant ``face of rank 2".)  The elements of rank $j$ are the {\em $j$-faces\/} of~$\mathcal{P}$. For $j = 0$, $1$ or $2$, the $j$-faces are also called {\em vertices}, {\em edges\/} and {\em facets\/}, respectively. There are two \textit{improper} faces: a minimum face $F_{-1}$ (of rank $-1$) and a maximum face~$F_3$ (of rank $3$). The \textit{flags} (maximal totally ordered subsets) of $\mathcal{P}$ all contain one vertex, one edge and one facet, in addition to $F_{-1}$ and $F_3$ (which are often suppressed in listing the elements in a flag). Further, $\mathcal{P}$ is \textit{ strongly flag-connected}, meaning that any two flags $\Phi$ and $\Psi$ of $\mathcal{P}$ can be joined by a sequence of flags $\Phi = \Phi_{0},\Phi_{1},\ldots,\Phi_{k} =\Psi$, where $\Phi_{i-1}$ and $\Phi_{i}$ are {\em adjacent\/} (differ by one element), and $\Phi \cap\Psi \subseteq \Phi_{i}$ for each $i$. Finally, if $F$ and $G$ are a $(j-1)$-face and a $(j+1)$-face with $F < G$ and $0 \leq j \leq 2$, then there are exactly {\em two\/} $j$-faces $H$ such that $F < H < G$. As a consequence, for $0 \leq j \leq 2$, every flag $\Phi$ of $\mathcal{P}$ is adjacent to just one other flag, denoted $\Phi^j$ and called the \textit{$j$-adjacent} flag of $\Phi$, differing in the $j$-face.
\smallskip

When $F$ and $G$ are two elements of an abstract polyhedron with $F \leq G$, we call
$G/F := \{H \mid F \leq H \leq G\}$ 
a \textit{section} of $\mathcal{P}$. We usually identify a $j$-face $F$ with the section $F/F_{-1}$. If $F$ is a facet, then $F/F_{-1}$ (or simply, $F$) is isomorphic to the face lattice of a (finite) convex polygon or an (infinite) apeirogon. The same also holds for the \textit{vertex figure } $F_{3}/F$ at a vertex $F$ of $\mathcal{P}$.
\smallskip

If all facets of an abstract polyhedron $\mathcal{P}$ are $p$-gons for some $p$, and all vertex figures are $q$-gons for some $q$, then $\mathcal{P}$ is said to be of {\em type\/} $\{p,q\}$; here $p=\infty$ or $q=\infty$ are permitted. 
\smallskip

An abstract polyhedron $\mathcal{P}$ is said to be {\em regular\/} if its (combinatorial) {\em automorphism group\/} $\Gamma(\mathcal{P})$ is transitive on the flags of $\mathcal{P}$. Suppose $\mathcal{P}$ is an abstract regular polyhedron, and let $\Phi := \{F_{0},F_{1},F_{2}\}$ be a {\em base\/} flag of $\mathcal{P}$. Then $\Gamma(\mathcal{P})$ is generated by {\em distinguished generators\/} $\rho_{0},\rho_{1},\rho_{2}$ ({\em with respect to\/}~$\Phi$), where $\rho_{j}$ is the unique automorphism which fixes all faces of $\Phi$ but the $j$-face (and maps $\Phi$ to its $j$-\textit{adjacent} flag $\Phi^j$). As with geometric regular polyhedra we define three automorphisms, 
\[\sigma_1:=\rho_0\rho_1,\; \sigma_2:= \rho_1\rho_2,\; \sigma_0:= \rho_0\rho_2,\]
called the \textit{distinguished rotations} of $\mathcal{P}$, and observe that any two of these generate the \textit{rotation subgroup} $\Gamma^+(\mathcal{P})$ of $\Gamma(\mathcal{P})$, which has index at most $2$ in $\Gamma(\mathcal{P})$. We say that an abstract polyhedron $\mathcal{P}$ is \textit{directly regular} (or \textit{orientably regular}) if the index is 2. Note that a locally finite abstract regular polyhedron (with finite $p$, $q$) is directly regular if and only if it corresponds to a regular map on an orientable surface~\cite{Conder}. A directly regular geometric polyhedron is also a directly regular abstract polyhedron.
\smallskip

The generators $\rho_0,\rho_1,\rho_2$ of $\Gamma(\mathcal{P})$ and $\sigma_1,\sigma_2$ of $\Gamma^+(\mathcal{P})$, respectively, have properties very similar to those of the generators $r_0,r_1,r_2$ of $G(P)$ and $s_1,s_2$ of $G^+(P)$ of a geometric regular polyhedron $P$, and we will not repeat them all. In particular, $\sigma_1$ cycles through (or shifts) the vertices of $F_2$, and $\sigma_2$ cycles (or shifts) through the vertices of the vertex figure at $F_0$. 
\smallskip

An abstract polyhedron $\mathcal{P}$ is {\em chiral\/} if $\Gamma(\mathcal{P})$ has two orbits on the flags, such that adjacent flags are in distinct orbits. Chiral polyhedra have automorphism groups generated by a pair $\sigma_1,\sigma_2$ of ``rotations'' with properties similar to those of the generators for the combinatorial rotation subgroup of a regular polyhedron. Note that the underlying abstract polyhedron of a geometrically chiral polyhedron must combinatorially be chiral or regular.
\smallskip

We say that an abstract polyhedron $\mathcal{P}$ is \textit{vertex-transitive} if $\Gamma(\mathcal{P})$ is transitive on the vertices. This is equivalent to saying that $\mathcal{P}$ is (combinatorially) \text{uniform}, meaning that $\mathcal{P}$ is vertex-transitive and has (combinatorially) regular facets. In fact, the latter holds trivially for any abstract polyhedron.
\smallskip

The abstract theory of polyhedra is closely linked to the geometric theory via the concept of a realizations (see \cite[Ch.5]{41} and McMullen~\cite{McMullen New Book}). The regular, chiral, or uniform polyhedra investigated here are faithful realizations of underlying abstract polyhedra in the sense of \cite{41,McMullen New Book}. In this situation, the geometric symmetry group becomes a subgroup of the combinatorial automorphism group, but in general only a proper subgroup. For geometric regular polyhedra the two groups coincide, but for geometric chiral or uniform polyhedra the subgroup relationship can be proper. For example, all geometric chiral polyhedra with helical faces are combinatorially regular and their symmetry group has index 2 in the automorphism group~\cite{47}. 
\smallskip

From now on, when there is little chance of confusion, we use the term ``face'' to mean ``$2$-face'' (facet) and will do so primarily in the context of geometric polyhedra. 

\section{Geometric Regular Polyhedra}
\label{regpo}

In this section, we briefly describe the regular (skeletal) polyhedra following the notation and classification scheme of \cite[Ch.~7E]{41} (which differs from that in \cite{18,19,22}). There are 48 regular polyhedra, up to similarity and scaling of components (when applicable). They can be arranged into the following families: finite regular polyhedra, planar regular polyhedra, blended regular polyhedra, and pure (non-blended) regular polyhedra. There are 18 finite, 6 planar, 12 blended, and 12 pure regular polyhedra. (In \cite{18,19,22}, the grouping into families is slightly different from ours and leads to a low count of polyhedra.)\smallskip 

Within each of these families, we can choose a set of ``base polyhedra" and obtain all other polyhedra in the family via certain operations $\alpha$. These operations are the duality $\delta$, Petrie duality $\pi$, faceting $\varphi_2$, halving $\eta$, and skewing $\sigma$. When the operation $\alpha$ applies to a regular polyhedron $P$ to give a new regular polyhedron $P^\alpha$, the distinguished generators $r_0,r_1,r_2$ of $G(P)$ are transformed into the distinguished generators of $G(P^\alpha)$ as follows: 
\begin{equation}
\label{ops}
\begin{array}{rccl}
\delta:& (r_0,r_1,r_2)& \mapsto & (r_2,r_1,r_0), \\
\pi:& (r_0,r_1,r_2) &\mapsto &(r_0r_2,r_1,r_2), \\
\varphi_2:& (r_0,r_1,r_2)& \mapsto &(r_0,r_1r_2r_1,r_2), \\
\eta:& (r_0,r_1,r_2)& \mapsto &(r_0r_1r_0,r_2,r_1), \\
\sigma:& (r_0,r_1,r_2)& \mapsto &(r_1,r_0r_2,(r_1r_2)^2).
\end{array}
\end{equation}
\smallskip

For example, the family of finite regular polyhedra is given in the following display taken from \cite{41}:
\begin{equation*}
\includegraphics[height=8cm]{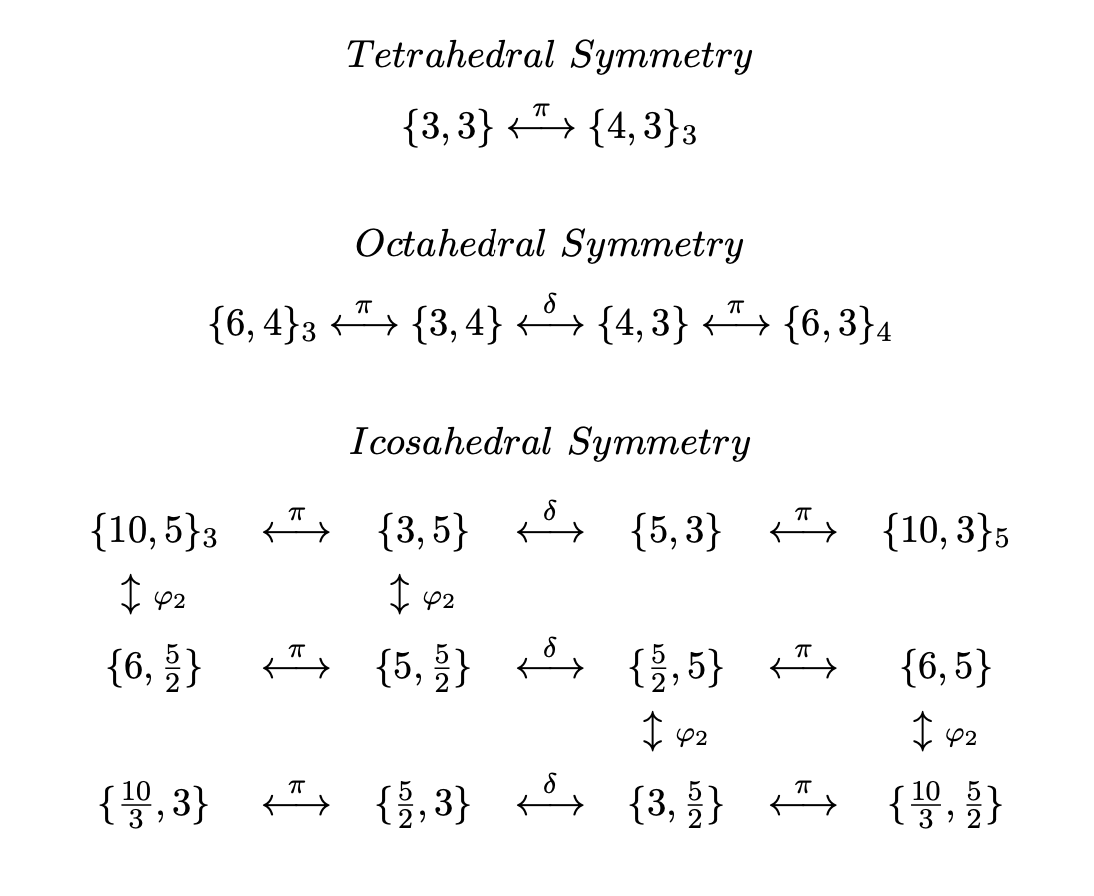}
\label{fig: finite regular polyhedra table 1}
\end{equation*}
Possible base polyhedra are the tetrahedron $\{3,3\}$, the cube $\{4,3\}$, and the icosahedron $\{3,5\}$. All the other finite regular polyhedra (and their symmetry groups) can be obtained from these by applying the operations $\delta$, $\pi$, and $\varphi_2$.
\smallskip

A \textit{Petrie polygon} of a regular polyhedron $P$ is a path along edges such that any two, but no three, successive edges lie in a face of $P$. A regular polyhedron of type $\{p,q\}$ is denoted $\{p,q\}_r$ if the length $r$ of its Petrie polygons determines its combinatorial type completely. This explains some of the subscripts in the above display. Similarly, a \textit{2-hole} of $P$ is a path along edges that leaves a vertex by the second edge from which it entered, always in the same sense (in some local orientation). A regular polyhedron of type $\{p,q\}$ is denoted $\{p,q\mid h\}$ if the length $h$ of its 2-holes determines its combinatorial type completely. 
\smallskip

For regular polyhedra $P$ we often require the index of the combinatorial rotation subgroup $G^+(P)$ in the full symmetry group $G(P)$. We know the index to be 1 or 2. In determining the index we exploit a well-known fact about presentations for groups generated by involutions, which in the context of regular polyhedra can be phrased as follows. Suppose $P$ is a regular polyhedron and its symmetry group has a presentation of the form 
\[G(P) = \langle r_0,r_1,r_2\mid \mathcal{R} \rangle,\] where $\mathcal{R}$ is a set of relators (words in $r_0,r_1,r_2$ that equal the identity element in $G(P)$). Then, $G^+(P)$ has index 2 in $G(P)$ if and only if $\mathcal{R}$ contains only words of even length in $r_0,r_1,r_2$.
\smallskip

Inspection of the presentations for the symmetry groups of regular polyhedra given in \cite[Ch. 7E]{41}, then allows us to sort the regular polyhedra into two groups; regular polyhedra with $G^+(P)$ having index 2 in $G(P)$ and regular polyhedra with $G^+(P) = G(P)$. The former are precisely the directly regular polyhedra, which topologically are regular maps on an orientable surface if they have finite faces. The latter are not directly regular, and topologically are regular maps on non-orientable surfaces if they have finite faces. We summarize the results in Theorem~\ref{index2summ} using the notation of \cite[Ch. 7E]{41}.
\smallskip

\begin{theorem}
\label{index2summ}
\noindent\begin{itemize}
\item[(a)] The finite regular polyhedra with an index 2 combinatorial rotation subgroup are: 
$$\{3,3\}, \ \{3,4\}, \ \{4,3\}, \ \{6,3\}_4, \ \{3,5\}, \ \{5,3\}, \ \{5,\tfrac{5}{2}\}, \ \{\tfrac{5}{2},5\}, \ \{3,\tfrac{5}{2}\}, \ \{\tfrac{5}{2},3\}.$$
For all other finite regular polyhedra the index is~1. 
\item[(b)] The planar regular apeirohedra with an index 2 combinatorial rotation subgroup are 
$$\{4,4\}, \ \{3,6\}, \ \{6,3\}, \ \{\infty,4\}_4, \ \{\infty,3\}_6.$$ 
For the remaining planar regular apeirohedron, $\{\infty,6\}_3$, the index is~1.
\item[(c)] Every blended regular apeirohedron has an index 2 combinatorial rotation subgroup: 
$$\{4,4\}\#\{\}, \ \{3,6\}\#\{\}, \ \{6,3\}\#\{\}, \ \{\infty,4\}_4\#\{\}, \ \{\infty,3\}_6\#\{\}, \ \{\infty,6\}_3\#\{\},$$ 
$$\{4,4\}\#\{\infty\}, \, \{3,6\}\#\{\infty\}, \, \{6,3\}\#\{\infty\}, \, \{\infty,4\}_4\#\{\infty\}, \, \{\infty,3\}_6\#\{\infty\}, \, \{\infty,6\}_3\#\{\infty\}.$$
\item[(d)] The pure regular apeirohedra with an index 2 combinatorial rotation subgroup are: 
$$\{4,6|4\}, \ \{6,4|4\}, \ \{6,6\}_4, \ \{4,6\}_6, \ \{6,4\}_6,$$ $$\{6,6|3\}, \ \{\infty,4\}_{6,4}, \ \{\infty,6\}_{4,4}, \ \{\infty,3\}^{(a)}, \ \{\infty,3\}^{(b)}.$$ For the remaining pure regular apeirohedra, $\{\infty,6\}_{6,3}$ and $\{\infty, 4\}_{\cdot,*3}$, the index is~1.
\end{itemize}
\end{theorem}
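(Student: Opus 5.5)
The plan is to apply the parity criterion stated just before the theorem: if $G(P)=\langle r_0,r_1,r_2\mid\mathcal{R}\rangle$, then $G^+(P)$ has index $2$ exactly when every relator in $\mathcal{R}$ has even length. A cleaner route in most cases avoids presentations altogether. Since $P$ comes from a base polyhedron $Q$ by the operations in (\ref{ops}), the new generators are explicit words in the old ones. The new rotation subgroup then lies in the even-length words of $G(Q)$ if and only if each new generator has the same parity as the corresponding old one.

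More usefully, $G^+(P)=G(P)$ holds iff the identity can be written as an odd-length word in the new generators. Equivalently, it holds iff some old word of odd length in $r_0,r_1,r_2$ becomes, after rewriting in the new generators, a word of odd length.

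For the base cases I would check parity directly.
\begin{itemize}
\item For $\{3,3\}$, $\{4,3\}$, $\{3,5\}$, the planar polyhedra $\{4,4\}$ and $\{3,6\}$, and the three pure base polyhedra of \cite[Ch.~7E]{41}, the groups are the geometric Coxeter-type groups. There each $r_j$ is a plane reflection (or a composite with a known determinant), and the standard Coxeter presentations have only even relators. So each of these has index~$2$.
\item Duality $\delta$ and the faceting $\varphi_2$, which sends $r_1\mapsto r_1r_2r_1$, preserve the parity of each generator. Hence they preserve the index. This handles $\{3,4\}$, $\{5,3\}$, $\{5,\tfrac52\}$, $\{\tfrac52,5\}$, $\{3,\tfrac52\}$, $\{\tfrac52,3\}$, $\{6,3\}$, and the like.
\item The Petrie operation $\pi$ replaces $r_0$ by $r_0r_2$, which has even length in the old generators. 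So $\pi$ preserves the index exactly when there is an odd relation in the new generators coming from $r_0$ being expressible separately.
\end{itemize}

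Concretely, I would decide each Petrie dual by a direct geometric test. For the finite and planar cases, use the determinant homomorphism $\det: G(P)\to\{\pm1\}$ composed with a parity argument. Alternatively, exhibit an explicit odd relator, namely the extra relation listed in \cite[Ch.~7E]{41}. For example, $\{4,3\}^\pi=\{6,3\}_4$ retains index $2$, while $\{3,3\}^\pi=\{4,3\}_3$ and the Petrials of the icosahedral family acquire an odd relator and so drop to index $1$. This is checked from the presentations such as $\{4,3\}_3$: $(r_0r_1r_2)^3=1$, which has length~$9$.

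For the blended apeirohedra in~(c), I would note that $G(P\#\{\})$ or $G(P\#\{\infty\})$ is a subgroup of $G(P)\times G(\{\})$ or $G(P)\times G(\{\infty\})$. The second factor assigns a parity to $r_0,r_1,r_2$ (for instance $r_0\mapsto$ reflection of the segment, $r_1,r_2\mapsto 1$). Combining this with the component parity gives a nontrivial homomorphism to $\{\pm1\}$ sending every $r_j$ to $-1$, even for $\{\infty,6\}_3\#\{\}$. Constructing this sign homomorphism carefully is the only subtle point there.

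The main obstacle is part~(d), in particular the two exceptional cases $\{\infty,6\}_{6,3}$ and $\{\infty,4\}_{\cdot,*3}$. For these I would locate, in the presentations of \cite[Ch.~7E]{41}, a relator of odd length. Examples are the helix-type relation for $\{\infty,6\}_{6,3}$ and the defining relation $(r_0r_1r_2r_1)^3=1$ for $\{\infty,4\}_{\cdot,*3}$, after rewriting under $\sigma$ or $\eta$. I would also verify that the other ten presentations contain only even relators. For the operations $\eta$ and $\sigma$, new generators such as $r_0r_1r_0$ or $r_0r_2$ have changed parity. So here I would rely directly on the published presentations rather than transferring parity, and checking those relator lengths is the routine but error-prone core of the proof.
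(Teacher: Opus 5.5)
Your main strategy, reading off relator parities from the presentations in \cite[Ch.~7E]{41}, is the paper's entire proof, which is just that inspection. Your treatment of the blends in (c) is a correct and cleaner alternative: restrict a sign character from $G(P)\times G(Q)$. For $P=\{\infty,6\}_3$, take the character of $G(\{3,6\})=[3,6]$ sending every old generator to $-1$. On the Petrial generators $(r_0r_2,r_1,r_2)$ it reads $(1,-1,-1)$; multiply it by the character of $G(\{\})$ or $G(\{\infty\})$ that is $-1$ on $r_0''$ and $+1$ on $r_1''$.

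Several of the shortcuts you add, however, are false as stated.

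\emph{Parity transfer.} The principle is only sufficient: if $Q$ has index $2$ and every new generator is an odd word in $G(Q)$, restricting the sign character shows $Q^\alpha$ has index $2$. The converse fails. For example, $\{6,3\}_4=\{4,3\}^\pi$ has index $2$ although $r_0r_2$ is even. Consequences:
\begin{itemize}
\item You have only shown that $\varphi_2$ carries index $2$ to index $2$.
\item Your sentence about $\pi$ has the logic reversed: an odd relator makes the index drop to $1$.
\item $\eta$ does not change parity at all, since $r_0r_1r_0$ has length $3$.
\end{itemize}

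\emph{Determinant.} It cannot certify index $2$ for $\{6,3\}_4$, $\{\infty,4\}_4$ or $\{\infty,3\}_6$, because $\det(r_0r_2)=+1$. The right tool is the set of all characters $\chi\colon G(P)\to\{\pm1\}$: the index is $2$ iff some $\chi$ sends all three distinguished generators to $-1$. For a Petrial this means a character of the old group with $(r_0,r_1,r_2)\mapsto(1,-1,-1)$. If $p$ is odd, $(r_0r_1)^p=1$ forces $\chi(r_0)=\chi(r_1)$, which rules this out (this is the paper's Lemma). If the group is exactly $[p,q]$ with $p$ even, the character exists. This settles every Petrial in (a) and (b) at once: those of $\{4,3\},\{4,4\},\{6,3\}$ have index $2$, and all others have odd $p$, hence index $1$.

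In (d) the same problems arise.

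\emph{Pure base groups.} They are not Coxeter groups generated by plane reflections. The Petrie--Coxeter vertex figures are skew, so $r_1$ and $r_2$ cannot both be plane reflections; for $\{4,6|4\}$, $r_1$ is a half-turn. The presentations also need the 2-hole relator $(r_0r_1r_2r_1)^h$. Its length $4h$ is even, so index $2$ survives, but this must be read from the presentation, not from $\det$.

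\emph{Your witness for $\{\infty,4\}_{\cdot,*3}$.} This is the more serious error. $(r_0r_1r_2r_1)^3$ has length $12$, so it certifies nothing. An index-$1$ certificate must be an odd word in the distinguished generators of that polyhedron itself, and ``rewriting under $\sigma$ or $\eta$'' does not supply one; under $\eta$ parities do not even change. You have to transport relators explicitly. For example, $\{\infty,6\}_{6,3}=\{6,6|3\}^\pi$, and in the new generators the old $r_0$ equals $r_0'r_2'$. So the 2-hole relator $(r_0r_1r_2r_1)^3$ of $\{6,6|3\}$ becomes $(r_0'r_2'r_1'r_2'r_1')^3$, of length $15$. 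Equivalently, that relator forces $\chi(r_0)=\chi(r_2)$, so no character sends $r_0r_2$ to $-1$. This is the odd relator your vague ``helix-type relation'' should be. You still need to locate the genuine odd word in the actual presentation of $\{\infty,4\}_{\cdot,*3}$. With that fixed, and with the character formulation replacing the parity-transfer claims, the argument is complete.
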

\smallskip

From these results, one may make the following useful observation. 
\smallskip

\begin{lemma}
The Petrie dual $P^\pi$ of a regular geometric polyhedron $P$ of type $\{p,q\}$ cannot be directly regular if $p$ is odd. Thus $G^{+}(P^\pi)=G(P^\pi)$ in this case.
\end{lemma}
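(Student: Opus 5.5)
We argue via the group presentation and the even-length criterion recalled just before Theorem~\ref{index2summ}, rather than by inspecting the list. Let $r_0,r_1,r_2$ be the distinguished generators of $G(P)$. By (\ref{ops}), the Petrie dual $P^\pi$ has distinguished generators
\[ r_0' := r_0r_2,\quad r_1' := r_1,\quad r_2' := r_2, \]
and these generate the same group, so $G(P^\pi)=G(P)$ as a group of isometries.

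The key step is to exhibit a relation of odd length in the new generators. Since $P$ has type $\{p,q\}$, we have $(r_0r_1)^p=1$. Substituting $r_0 = r_0'r_2'$ gives
\[ (r_0'\,r_2'\,r_1')^p = 1 , \]
a word of length $3p$ in $r_0',r_1',r_2'$, which is odd when $p$ is odd. We then use the parity homomorphism directly instead of appealing to a full presentation. Suppose $G^+(P^\pi)=\langle r_0'r_1',\,r_1'r_2'\rangle$ had index $2$ in $G(P^\pi)$. Then there would be a homomorphism $\varepsilon\colon G(P^\pi)\to\{\pm1\}$ with $\varepsilon(r_j')=-1$ for $j=0,1,2$. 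Indeed, $G^+(P^\pi)$ consists of the even-length words, and index $2$ means no odd word lies in it, so the parity map is well defined. Applying $\varepsilon$ to the displayed relation gives $(-1)^{3p}=1$, which is impossible for odd $p$. Hence the index is $1$, that is, $G^+(P^\pi)=G(P^\pi)$, and $P^\pi$ is not directly regular.

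The only subtle point is justifying the parity map from ``index 2''. Since $G^+$ is generated by the products $r_i'r_j'$ and every generator $r_j'$ lies in the same coset $r_2'G^+$, index $2$ forces the even words to be exactly $G^+$ and the odd words to be its complement. This is the criterion stated before Theorem~\ref{index2summ}, applied without needing a complete presentation.

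As a sanity check, we would compare the result with Theorem~\ref{index2summ}(a),(b). For instance, $\{3,3\}^\pi=\{4,3\}_3$, $\{3,4\}^\pi=\{6,4\}_3$ and $\{3,6\}^\pi=\{\infty,6\}_3$ all have index $1$. This is consistent, since the index-2 entries arising as Petrie duals, such as $\{6,3\}_4=\{4,3\}^\pi$, come from polyhedra with $p$ even.
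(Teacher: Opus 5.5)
Your proof is correct, but it takes a different route from the paper's.

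The paper argues constructively. It writes $u_1=r_0r_2r_1$, $u_2=r_1r_2$ and $u_0=u_1u_2=r_0$ for the distinguished combinatorial rotations of $P^\pi$. It then computes $(r_0r_1)^2=u_0u_2u_1$. With $p=2k+1$, the relation $(r_1r_0)^p=1$ becomes $1=r_1(u_0u_2u_1)^k u_0$, which places $r_1$ in $G^+(P^\pi)$. Finally $r_2=u_0u_1r_1$. So $r_0,r_1,r_2$ are each exhibited as explicit words in the rotations of $P^\pi$, giving $G(P)\subseteq G^+(P^\pi)$.

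You instead give an obstruction argument. An index-$2$ subgroup would produce the sign character $\varepsilon$ with $\varepsilon(r_j')=-1$ for all $j$. But the relation $(r_0r_1)^p=(r_0'r_2'r_1')^p=1$ has odd length $3p$ in the new generators, so $\varepsilon$ cannot send it to $1$. Your justification that $\varepsilon$ is well defined, namely $r_j'\in r_2'G^+$ for every $j$, is exactly what is needed. The same coset argument also shows the index is at most $2$, so ruling out $2$ does give $1$.

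What your version buys:
\begin{itemize}
\item It is shorter.
\item It isolates why oddness of $p$ matters. In the new generators $r_0=r_0'r_2'$ has even length while $r_1=r_1'$ and $r_2=r_2'$ have odd length, so a relation of $G(P)$ has odd length exactly when it contains an odd total number of letters $r_1,r_2$. Any such relation forces $G^+(P^\pi)=G(P^\pi)$.
\item It needs only one relation rather than a full presentation, which is the point you make when comparing with the even-length criterion recalled before Theorem~\ref{index2summ}.
\end{itemize}

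What the paper's version buys is explicit membership certificates: concrete expressions of the reflections $r_1,r_2$ as products of $u_0,u_1,u_2$. These are useful if one wants to compute inside $G^+(P^\pi)$ directly.
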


\begin{proof}
This follows directly from Theorem \ref{index2summ}, but may also be proved directly as follows. Suppose $G(P)=\langle r_0,r_1,r_2\rangle$ so that $G(P^{\pi})=\langle r_0r_2,r_1,r_2\rangle=G(P)$. Let $u_0, u_1, u_2$ be the three generators of $G^+(P^\pi)$ defined in the same way as $s_0, s_1, s_2$ for $G^+(P)$. Then $u_1 = r_0r_2r_1$, $u_2 =r_1r_2 = s_2$, $u_0 = u_1u_2 = r_0$. Now, let $p$ be odd, say $p = 2k+1$ for some positive integer~$k$. As $G(P) = G(P^\pi) \geq G^+(P^\pi)$, it is sufficient to show that the generators of $G(P)$ can be obtained from the generators of $G^+(P^\pi)$. This is clear for $r_0$, since $r_0 = u_0$. For $r_1$, we can argue as follows. Since 
$$(r_0r_1)^2 = r_0r_1r_2r_2r_0r_1 = r_0r_1r_2r_0r_2r_1 = u_0u_2u_0u_2^{-1} = u_0u_2u_1$$ 
we observe that 
$$ 1 = (r_1r_0)^p = r_1(r_0r_1)^{2k}r_0 = r_1(u_0u_2u_1)^k u_0$$ 
and thus $r_1 \in \langle  u_0,u_1,u_2 \rangle = G^+(P^\pi)$. 
For $r_2$, we have 
$$r_2 = r_0(r_0r_2r_1)r_1 = u_0u_1r_1$$ 
and thus $r_2 \in G^+(P)$ also. Thus, if $p$ is odd, the generators of $G(P)$ all lie in $G^+(P^\pi)$ and so $G(P^\pi) = G^+(P^\pi)$.
\end{proof}
\smallskip

We are often concerned with finding fundamental regions for the combinatorial rotation subgroups of the symmetry groups of regular or chiral polyhedra $P$ in $\mathbb{E}^3$. Recall that a \textit{fundamental region} for a discrete group of isometries $G$ of $\mathbb{E}^n$ is an open subset $D$ of $\mathbb{E}^n$ such that
\[g(D) \cap D = \emptyset\;\;\; (g\in G,\, g\neq 1)\]
and 
\[\mathbb{E}^n = \bigcup_{g\in G} g(\text{cl}(D)).\]
That is, the images of the closure cl$(D)$ of $D$ under $G$ tile 3-space. Discrete groups of isometries permit many possible ways to select a fundamental region. For our purposes we begin with the orbit of a point $w$ not fixed by any non-trivial element of $G$ and choose as a fundamental region $D$ the open Dirichlet-Voronoi region of $w$ in the Dirichlet-Voronoi tessellation defined by the point orbit of $w$ \cite{Wythoffians}. The Appendix of \cite{Sk1} includes code written in Mathematica \cite{Mathematica} which generates the fundamental regions for the symmetry groups and combinatorial rotation subgroups of regular and chiral polyhedra.
\smallskip

\section{Skeletal snub polyhedra} 
\label{snub}

In this section, we define the snub construction for both regular and chiral skeletal polyhedra in $\mathbb{E}^3$ and discuss the geometric, combinatorial and topological structures of the resulting objects.  Our construction is based on Wythoff's construction, and the structures generated from the classical regular convex and star polyhedra generally yield well-known convex and star snubs; these classical snub polyhedra are treated in \cite{11}. Consequently, we will call these resulting structures {\it snubs}.
\smallskip

Let $P$ be a regular or chiral polyhedron in $\mathbb{E}^3$, and let $\mathcal{P}$ be the underlying abstract polyhedron of $P$. Recall from Section~\ref{skelpo} the definitions of $G^+(P)$ and the three symmetries $s_1,s_2,s_0$. More explicitly, if $P$ is regular and $r_0,r_1,r_2$ are the generators of $G(P)$ (associated with a base flag $\Phi$), then
$$s_{1}:=r_{0}r_{1}, \ s_{2}:=r_{1}r_{2}, \ s_{0}:=s_{1}s_{2} = r_{0}r_{2}.$$ 
On the other hand, if $P$ is chiral and $S_1, S_2$ are the generators of $G(P)$ (associated with a base flag $\Phi$ of $P$), then 
$$s_1 := S_1, \ s_2:= S_2, \ s_0:= s_1s_2 = S_1S_2.$$ 
In either case, the combinatorial rotation subgroup $G^+(P)$ of $G(P)$ is given by 
$G^+(P):= \langle s_1, s_2 \rangle$. 
Now if $P$ is regular, then $G^+(P)$ is isomorphic to $\Gamma^+(\mathcal{P})$. However, if $P$ is chiral, then $\mathcal{P}$ is either (combinatorially) regular or chiral and thus $G^+(P)=G(P)$ is either isomorphic to $\Gamma^+(\mathcal{P})$ (if $\mathcal{P}$ is regular) or $\Gamma(\mathcal{P})$ (if $\mathcal{P}$ is chiral).  \smallskip

Our construction proceeds from a suitably chosen initial point, $v$, and constructs the snub of a regular or chiral polyhedron $P$ as an orbit structure under the combinatorial rotation subgroup $G^+(P)$ of $P$. When $v$ is chosen in the closure of a fundamental region of $G^+(P)$ in $\mathbb{E}^3$ ``close" to the base flag of $P$, the resulting polyhedron is ``close" (in some sense) to the original polyhedron. In our context, we choose a fundamental region of $G^+(P)$ in such a way that for each $i=0,1,2$ for which the fixed point set of $s_i$ is non-empty, the fundamental region has a boundary point (often many such points) fixed by the symmetry $s_i$. However, the snub construction can be applied more generally with points $v$ chosen outside this fundamental region, but in this case the geometry of the resulting polyhedra is often harder to visualize. 
\smallskip

Suppose a suitable fundamental region $D$ of $G^+(P)$ has been chosen as described, and recall that $D$ is an open set. Roughly speaking, the closure of $D$ will ``split" (generally) into four sets which we also refer to as \textit{regions} (although three of them are not open sets); three regions will consist of points in space that are invariant under one of the symmetries $s_0, \ s_1$ or $s_2$ respectively, and the fourth will consist of points in space which are moved under all $s_0, \ s_1$ and $s_2$. It can happen that the second region is empty; for example, when $s_1$ is a screw translation. Note that if a point $v$ is invariant under two of the symmetries $s_0, s_1$ and $s_2$, then it is also invariant under the third and therefore under $G^+(P)$. Such a point $v$ exists only if $G^+(P)$ is finite, and then this point is unique and given by the center of $P$. Therefore, no two of the first three regions share a common point, unless $P$ is finite. Generally, by letting $v$ be in these four regions (but $v$ not a common fixed point of $s_0$, $s_{1}$, $s_{2}$) we obtain four combinatorially different polyhedra or polyhedra-like structures, namely the medial of $P$, the dual of $P$, the polyhedron $P$ itself, and a genuine snub, respectively. 
\smallskip

Note that the points in $D$ itself are not fixed by any nontrivial isometry in $G^+(P)$, so $D$ lies in the fourth region. The case when $v\in D$ is the scenario we are most interested in, and it results in a genuine snub. 

As we will see, genuine snubs arise more generally from points $v\in\mathbb{E}^3$ that satisfy what is called the \textit{initial placement condition}, 
\begin{equation}
\label{initialplacem}
{\rm IPC:}\;\;\; G_v^+(P)=\{1\},
\end{equation}
where $G_v^+(P)$ denotes the stabilizer of $v$ in $G^+(P)$. This will become clear in the subsequent sections. Note that the IPC  just means that $G^+(P)$ acts freely on the orbit of $v$ under $G^+(P)$, that is, 
$g(v)\neq v$ whenever $g\in G^+(P)$, $g\neq 1$. 

\subsection{The snub ${S}_P(v)$}
\label{snubcon}

We now proceed with the construction of the \textit{snub}  of $P$, which we denote by $S_P(v)$.  One may use Figure~\ref{fig:base faces} as a reference.  \smallskip

\begin{figure}[H]
    \centering
    \includegraphics[height=9cm]{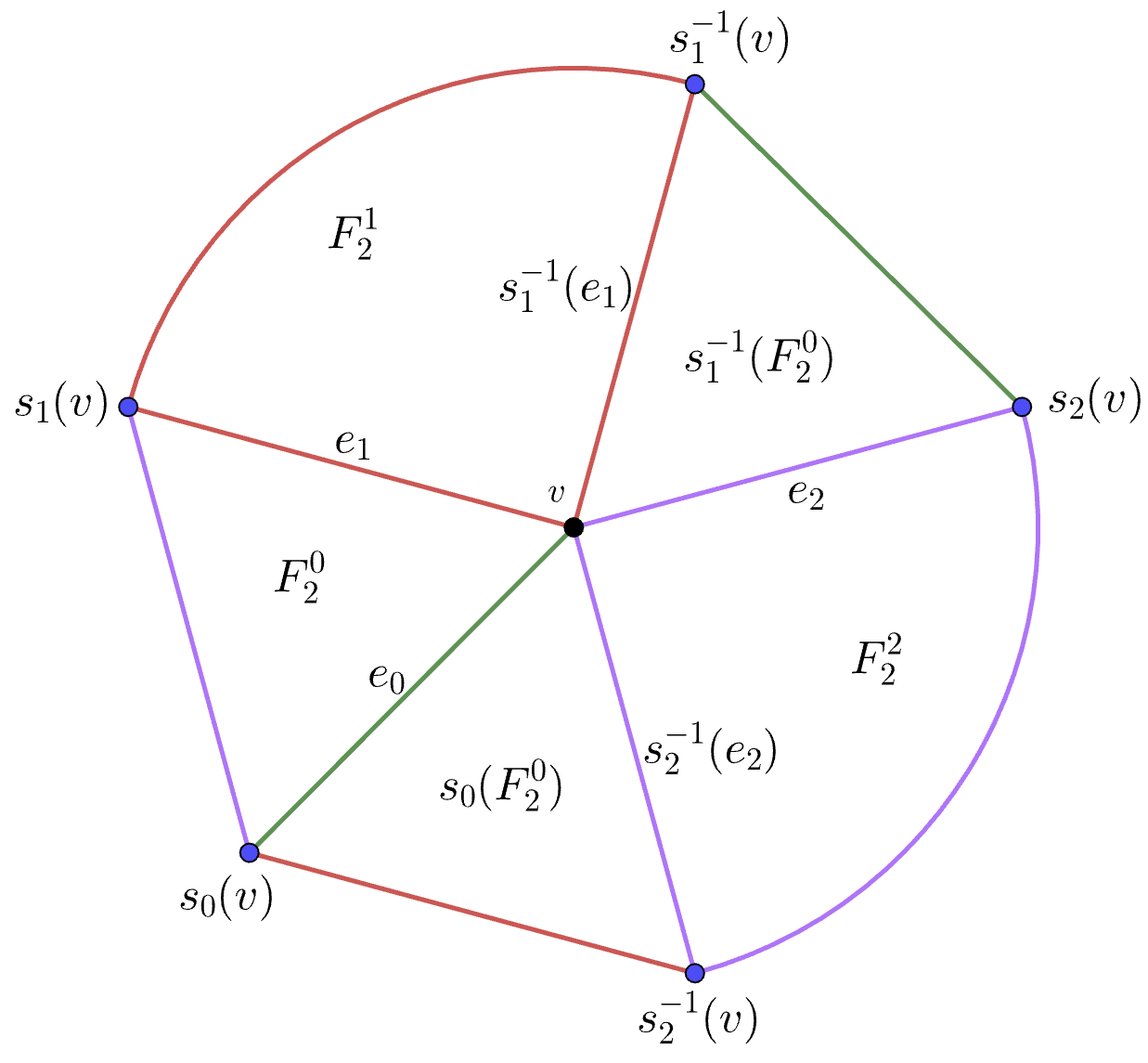}
    \caption{Vertices, edges, and faces of $S_P(v)$ adjacent to the initial vertex $v$}
    \label{fig:base faces}
\end{figure}


Let $v\in\mathbb{E}^3$; in practice, we often choose $v\in\text{cl}(D)$. We set $F_0:=v$ and $\mathcal{F}_0:=\{v\}$, and call $v$ the \textit{initial/base vertex} of the snub. At the outset we are not requiring that $v$ satisfies the initial placement condition of (\ref{initialplacem}). Further, define the \textit{type set} $I_v\subseteq\{0,1,2\}$ of $v$ by 
\begin{equation}
\label{Iv}
I_v:=
\begin{cases} 
      \{1,2\} & \text{if } s_0(v) = v \\
      \{2\} & \text{if } s_1(v) = v \\
      \{1\} & \text{if } s_2(v) = v \\
      \{0,1,2\} & \text{if } s_i(v) \neq v \text{ for all } i \\
\end{cases}
\end{equation}
We already mention here that the four possible type sets $I_v$ generally align with the four possible outcomes of $S_P(v)$:\ the medial of $P$, the dual of $P$, the polyhedron $P$ itself, and a genuine snub, respectively. Note that $I_v=\{0,1,2\}$ if $v$ satisfies the IPC. \smallskip 
   
Next, define the \textit{base edges} of ${S}_P(v)$ as 
\begin{equation}
\label{F1i}
F_1^{i} := \{v,s_i(v)\} \ \ \ \ \ (i \in I_v).
\end{equation}
We will often refer to these base edges as $e_i := F_1^i$. Denote the set of base edges by 
$$\mathcal{F}_1 := \{F_1^{i}:i \in I_v\}.$$ 
\smallskip

Defining the \textit{base faces} of $S_P(v)$ is less straightforward. We certainly would like to include 
\begin{equation}
\label{F2i}
F_2^{i} := \{s(F_1^{i}):s\in \langle s_i \rangle\}\ \ \ \ (i \in I_v \cap \{1,2\})
\end{equation}
as faces. In general, $F_2^1$ will be a face that is combinatorially isomorphic to the base face of~$P$, and $F_2^2$ will be a face that is combinatorially isomorphic to the vertex figure  about the base vertex in the base flag of $P$. There will be no other base faces if $I_v\subseteq\{1,2\}$. However, if $0\in I_v$ (that is, $v$ is moved under all of $s_0,s_1,s_2$), then the images of the faces above, under $G^+(P)$, are insufficient to close up the polyhedron. In this case we close the polyhedron up in a manner consistent with well-known snub polyhedra, like the snub cube or the snub dodecahedron, but variations of this are possible. If $0\in I_v$, then define 
\begin{equation}
\label{F20}
F_2^{0}:=\{F_1^{0}, F_1^{1}, s_1(F_1^{2})\} = \{e_0,e_1,s_1(e_2)\}.
\end{equation}
We show in Theorem \ref{thm1} that this choice is sufficient to close up this new structure. Next, denote the set of base faces as 
\[\mathcal{F}_2 := \{F_2^{i}:i \in I_v\}.\]
\smallskip

Finally, define the greatest face of ${S}_P(v)$ as $$F_3 := \{ s(F):s\in G^+(P), \ F\in\mathcal{F}_2 \}$$ and set $$\mathcal{F}_3 := \{F_3\}.$$ One may wish to be completely formal and consistent with the language of abstract polyhedra, in which case we must define the full \textit{snub} as 
\begin{equation}
\label{SPv}
{S}_P(v) := \{s(F): s\in G^+(P), \ F\in \bigcup_{j=-1}^3 \mathcal{F}_j \},
\end{equation}
with $\mathcal{F}_{-1}:=\{\emptyset\}$. In other words, to find all the vertices (0-faces), edges (1-faces), and faces (2-faces) of ${S}_P(v)$, look at the orbits of the base vertex, base edges, and base faces under $G^+(P)$. We let $V({S}_P(v))$, $E({S}_P(v))$, and $F({S}_P(v))$ denote the sets of vertices, edges, and faces of ${S}_P(v)$, respectively. Note that ${S}_P(v)$ is a partially ordered set. More precisely, if $i\leq j$ and $g,h \in G^+(P)$, then the image $g(B_i)$ of a base $i$-face $B_i$ is incident with the image $h(B_j)$ of a base $j$-face $B_j$ if and only $h^{-1}g(B_i)$ is an $i$-face of $B_j$ (in the usual meaning). \smallskip

We are mostly interested in the case when the stabilizer $G^+(P)_v$ of $v$ in $G^+(P)$ is trivial. On the other hand, when $v$ is chosen such that it is fixed by some $s_i$, ${S}_P(v)$ is often obtained by a well-known geometric operation on the polyhedron $P$. In the case that $s_0(v)=v$, ${S}_P(v)$ is often the medial of $P$; if $s_1(v)=v$, then ${S}_P(v)$ is almost always the dual of $P$; and if $s_2(v)=v$, then ${S}_P(v)$ is almost always $P$ itself. We also use the notation ${S}_P^i(v)$ in place of ${S}_P(v)$ if $s_i(v)=v$, for $i=0,1,2$. In a sense, ${S}_P^i(v)$ is a ``degenerate" snub.
\smallskip

We will discuss the snub construction for abstract regular polyhedra and regular maps in Section~\ref{section: topology}.

\subsection{Polyhedral structure of ${S}_P(v)$}
\label{polyhedralstruc}

In this section, we show that the snub ${S}_P(v)$ constructed from a regular or chiral polyhedron $P$ as described in the previous section indeed gives a skeletal polyhedron if the initial vertex $v$ satisfies the IPC of (\ref{initialplacem}). This is done in Theorem~\ref{thm1}. If $v$ satisfies the IPC, then $I_v=\{0,1,2\}$, $G_v^+(P)=\{1\}$, and $G^{+}(P)$ acts simply transitively on the vertex set of the snub. Geometrically nice snubs arise in particular when $v$ is chosen in the (open) fundamental region $D$ of $G^+(P)$.
\smallskip    

First note that if $P$ is a regular or chiral polyhedron, then $s_0, s_1, s_2$ are distinct isometries that satisfy the condition $\langle s_i \rangle \cap \langle s_j \rangle = \{1\}$ whenever $i\neq j$. This fact, together with the IPC for $v$, guarantees that the base edges $e_0, e_1, e_2$ are distinct, and that the base faces $F_2^0, F_2^1, F_2^2$ are also distinct. In particular, we can say that $F_2^1$ and $F_2^2$ only intersect at $v$, that $F_2^0$ and $F_2^1$ only intersect on $e_1$, and that $F_2^0$ and $F_2^2$ only intersect at $v$ (just as in Figure~\ref{fig:base faces}).  

Next, observe that each edge and face in ${S}_P(v)$ can be assigned a unique {\it type\/}, $i=0,1,2$, since the edge or face is an image of the base edge $e_i$ or the base face $F_2^i$ under $G^{+}(P)$, respectively. In other words, $G^{+}(P)$ has three edge orbits and three face orbits represented by the three base edges and three base faces, respectively. We sometimes refer to an edge or a face of type $i$ as a {\it type-$i$ edge} or {\it type-$i$ face}, respectively. Note that the assignment of types is well-defined, by the simple transitivity on the vertex-set. In fact, to explain by way of example, suppose that an edge $e$ of ${S}_P(v)$ has both type 1 and type 2. Then, there exist $s, t\in G^+(P)$ such that $e = s(e_1) = t(e_2)$  and thus $\{s(v), ss_1(v)\} = \{t(v), ts_2(v)\}$. Here, if $s(v) = t(v)$, then $s = t$ and thus $e_1 = e_2$, a contradiction. If $s(v) = ts_2(v)$, then $s = ts_2$ and thus $s_2 = t^{-1}s$;  similarly, we must also have $s_1 = s^{-1}t = s_2^{-1}$, which is impossible. Once the edges are known to have a unique type, we can easily derive the same result for faces. For example, suppose that a face $F$ of $\mathcal{S}_P(v)$ has type 1 and type 2. Then, there are $s, t\in G^+(P)$ such that $F = s(F_2^1) = t(F_2^2)$ and thus, $t^{-1}s(F_2^1) = F_2^2$. But $F_2^1$ contains only type-1 edges, and $F_2^2$ only type-2 edges, so the edge $t^{-1}s(e_1)$ of $F_2^2$ must have both types 1 and 2, a contradiction. \smallskip

Note that this allows us to assign colorings to the edges and the faces of $S_P(v)$. In our figures we use red for type-1 faces/edges, purple for type-2 faces/edges, and green for type-0 faces. 
\smallskip

The stabilizers of the base edges $e_i$ and base faces $F_2^i$ of $S_P(v)$ in $G^+(P)$ are given by 
\begin{equation}
\label{stabei}
G^{+}_{e_0}(P) = \langle s_0 \rangle,\;\, G^{+}_{e_1}(P) = G^{+}_{e_2}(P) = \{1\} 
\end{equation}
and 
\begin{equation}
\label{stabF2i}
G^{+}_{F_2^0}(P) = \{1\}, \;\, G^{+}_{F_2^1}(P) = \langle s_1 \rangle, \;\, G^{+}_{F_2^2}(P) = \langle s_2 \rangle,
\end{equation}
respectively. This is easy to verify for edges. For example, if $g\in G^+(P)$ fixes $e_0$, then clearly $g(v) = v$ or $g(v) = s_0(v)$, and thus $g=1$ or $g=s_0$. The face $F_2^0$ has only one edge of type 0, namely $e_0$, so if $g$ fixes $F_2^0$, then $g$ fixes $e_0$ and thus $g=1$ or $g=s_0$; however, $s_0(F_2^0) \neq F_2^0$, so $g=1$. If $g\in G^+(P)$ fixes $F_2^1$, then $g(v) = s_1^j(v)$ and thus $g=s_1^j$ for some $j$. The argument for ${F_2^2}$ is similar. 
\medskip

We now can establish the following theorem.

\begin{theorem}
\label{thm1}
Let $P$ be a regular or chiral polyhedron of type $\{p,q\}$, let $v\in \mathbb{E}^3$, and suppose $v$ satisfies the IPC of (\ref{initialplacem}) for $G^+(P)$. Then $S_{P}(v)$ is a geometric polyhedron, $G^{+}(P)$ acts simply vertex-transitively on $S_{P}(v)$, and $S_{P}(v)$ is of snub type $p.3.3.q.3$. 
\end{theorem}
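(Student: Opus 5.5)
Under the IPC, the map $g\mapsto g(v)$ is a bijection from $G^+(P)$ onto $V(S_P(v))$, so $G^+(P)$ acts simply transitively on the vertices. All remaining claims will be verified locally at the base vertex $v$ and then transported by $G^+(P)$, using the type assignment of edges and faces and the stabilizers (\ref{stabei}) and (\ref{stabF2i}).

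\textbf{Step 1: faces are polygons with the right lengths.} Since $\langle s_1\rangle$ acts freely on the orbit of $v$ and $s_1$ has order $p$, the face $F_2^1$ is a polygon with $p$ distinct vertices $s_1^j(v)$. It is finite if $p<\infty$ and an apeirogon if $p=\infty$; in the latter case discreteness of $G^+(P)$ gives local finiteness. Likewise $F_2^2$ is a $q$-gon. The face $F_2^0$ is the triangle with vertices $v$, $s_1(v)$ and $s_0(v)=s_1s_2(v)$. Its edges are $e_1=\{v,s_1(v)\}$, $s_1(e_2)=\{s_1(v),s_1s_2(v)\}$ and $e_0=\{v,s_0(v)\}$, and these three vertices are distinct by the IPC.

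\textbf{Step 2: the vertex star of $v$.} I will list the edges at $v$. Each is $g(e_i)$ with $v\in g(e_i)$, so $g=1$ or $g=s_i^{-1}$. This gives five edges: $e_1$, $s_1^{-1}(e_1)$, $e_2$, $s_2^{-1}(e_2)$ and $e_0=s_0^{-1}(e_0)$, the last coincidence holding because $s_0^2=1$.

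Next I determine the faces containing $v$ by the same stabilizer argument:
\begin{itemize}
\item one type-1 face, $F_2^1$, and one type-2 face, $F_2^2$;
\item the type-0 faces $g(F_2^0)$ with $g^{-1}(v)\in\{v,s_1(v),s_0(v)\}$, i.e.\ $g\in\{1,s_1^{-1},s_0\}$, giving three triangles.
\end{itemize}

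Then I check the cyclic order of these faces around $v$. The triangle $F_2^0$ contains $e_1$ and $e_0$. The triangle $s_0(F_2^0)$ contains $s_0(e_0)=e_0$ and $s_0s_1(e_2)$; using $s_0s_1=s_1s_2s_1$ and the relation $(s_1s_2)^2=1$ (so $s_2s_1s_2=s_1^{-1}$), this edge works out to $s_2^{-1}(e_2)$ or $e_2$, an edge of $F_2^2$. The triangle $s_1^{-1}(F_2^0)$ contains $s_1^{-1}(e_1)$ and $s_1^{-1}s_1(e_2)=e_2$.

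So the cycle around $v$ is
\[
F_2^1 \;\to\; F_2^0 \;\to\; s_0(F_2^0) \;\to\; F_2^2 \;\to\; s_1^{-1}(F_2^0) \;\to\; F_2^1,
\]
with consecutive faces sharing the five edges in turn. This gives a single cycle, so the vertex figure is a connected pentagon of type $p.3.3.q.3$. As a consequence, every edge at $v$ lies in exactly two faces. Transporting by $G^+(P)$ gives conditions (b) and (c) everywhere, and the vertex symbol.

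\textbf{Step 3: connectivity and discreteness.} For connectivity, the edge path $v\to s_i(v)$ realizes multiplication by the generators $s_1,s_2$. Hence the edge graph is connected because $G^+(P)=\langle s_1,s_2\rangle$. For discreteness, $G^+(P)$ is a discrete group, so the orbit of $v$ is discrete. There are only three face orbits, each consisting of polygons that are locally finite, so any compact set meets finitely many faces.

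The main obstacle is the bookkeeping in Step 2: the relations $s_1^p=s_2^q=(s_1s_2)^2=1$ must be used carefully to identify exactly which edges are shared, for instance to show that $s_0s_1(e_2)$ is an edge of $F_2^2$. One must also show that no additional coincidences collapse the cycle, which again follows from the IPC and from $\langle s_i\rangle\cap\langle s_j\rangle=\{1\}$.
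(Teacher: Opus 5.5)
Your proposal is correct and follows the paper's proof essentially step for step. It uses the same stabilizer argument to get exactly five edges at $v$, the same cycle $F_2^1\to F_2^0\to s_0(F_2^0)\to F_2^2\to s_1^{-1}(F_2^0)$, and the same generator-by-generator connectivity argument. The only differences are that you get condition (c) by double counting edge--face incidences at $v$, where the paper checks $e_0,e_1,e_2$ directly via the stabilizers, and that you also address discreteness, which the paper leaves implicit.

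One small fix: the relation you actually need is $s_0s_1=s_1s_2s_1=s_2^{-1}$, not $s_2s_1s_2=s_1^{-1}$. With it, the edge shared by $s_0(F_2^0)$ and $F_2^2$ is exactly $s_2^{-1}(e_2)$. It cannot be $e_2$, which would put $e_2$ in three faces and break your counting.
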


\begin{proof} 
First note that the faces of ${S}_P(v)$ really are polygons. Since every face is an image of a base face, it is sufficient to explain this for the base faces $F_2^1$, $F_2^2$, and $F_2^0$. The latter is a triangle, so there is nothing to show. For $F_2^1$ we can argue as follows. Since $s_1^i(e_1) = \{s_1^i(v), s_1^{i+1}(v)\}$, the cycle $v, s_1(v), s_1^2(v), \dots , s_1^{-1}(v), v$ passes through all the vertices/edges of $F_2^{1}$ and determines a finite polygon if $p$ is finite (that is, $s_1$ has finite order). If $p=\infty$, then the two-sided infinite path $\dots,s_1^{-3}(v), s_1^{-2}(v), s_1^{-1}(v),v,s_1(v),s_1^2(v),s_1^3(v),\dots$ will do the trick. The argument for the (finite) base face $F_2^{2}$ is analogous. 

Next, we show that the edge graph of ${S}_P(v)$ is connected. Clearly it suffices to prove that for any vertex $w \in {S}_P(v)$ there exists an edge path between the base vertex $v$ and $w$. By construction, $w=g(v)$ for some $g \in G^+(P)$. If $g=1$, there is nothing to prove. 

If $g\neq 1$, then we may write $g = t_{1}t_{2}\ldots t_{k}$ where each $t_j$ is one of $s_1$, $s_1^{-1}$, $s_2$, or $s_2^{-1}$ (the inverses are only needed for infinite faces), and proceed by induction on $k$. If $k=1$, then $g$ is one of $s_1$, $s_1^{-1}$, $s_2$, or $s_2^{-1}$, and $g(v)$ is connected to $v$ by a single edge, namely $e_{1}:=F_{1}^{1}$, $s_{1}^{-1}(e_{1})$, $e_{2}:=F_{1}^{2}$, or $s_{2}^{-1}(e_{2})$, respectively. (Note that $e_0:=F_{1}^{0}$ was not used.) Thus the claim holds for $k=1$. 

Now suppose $k>1$ and the claim holds for all elements of $G^+(P)$ that can be expressed as a product of the above form with fewer than $k$ factors. Let $g = t_{1}t_{2}\ldots t_{k}$ as before, and set $h :=t_{1}t_{2}\ldots t_{k-1}$. By inductive hypothesis, $v$ can be connected to $h(v)$ by an edge path in ${S}_P(v)$. Now, since $v$ can be connected to $t_{k}(v)$ by a single edge, $h(v)$ can also be connected to $h(t_{k}(v))=g(v)$ by a single edge. Hence, adjoining the edge connecting $h(v)$ and $g(v)$ to the edge path from $v$ and $h(v)$ then gives an edge path from $v$ to $g(v)$. (Note that this process did not employ any edges of ${S}_P(v)$ that are images of $e_0$ under $G^{+}(P)$. Our argument also shows that at most $k$ edges are needed to connect $v$ and $g(v)$, but the number will generally be smaller if type-0 edges are also used.) Thus ${S}_P(v)$ is connected. \smallskip

Next, we prove that the vertex figures of ${S}_P(v)$ are connected. By the vertex-transitivity of $S_P(v)$ it is sufficient to consider the vertex figure  at $v$. First observe that there are exactly five edges containing $v$. In fact, suppose that the construction leads to a sixth edge, call it $e$, containing $v$. Then $e$ is the image of $e_1$, $e_2$, or $e_0$ under some $g \in G^+(P)$. Since $g\neq 1$, we cannot have $g(v)=v$, and thus $g(s_1(v))=v$, $g(s_2(v))=v$, or $g(s_0(v)) = v$, respectively, giving $g=s_1^{-1}$, $g=s_2^{-1}$, or $g=s_0^{-1}$. In either case we arrive at a contradiction. Our next arguments will show that the vertex degree is indeed 5.

Now, to demonstrate the connectedness of the vertex figure we will cycle through all faces containing $v$, as follows:
\begin{itemize}
    \item $F_2^{1} = \{e_1, s_1(e_1), s_1^2(e_1), ... , s_1^{-1}(e_1)\}$ if $p<\infty$, or\\
    $F_2^{1} = \{\ldots,s_1^{-2}(e_1),s_1^{-1}(e_1), e_1, s_1(e_1), s_1^2(e_1),\ldots\}$ if $p=\infty$.
    \item $F_2^{0} = \{e_1, s_1(e_2), e_0\}$
    \item $s_0(F_2^{0}) = \{s_0(e_1), s_2^{-1}(e_2), e_0\}$
    \item $F_2^{2} = \{e_2, s_2(e_2), ... , s_2^{-1}(e_2)\}$
    \item $s_2s_0(F_2^{0}) = s_1^{-1}(F_2^0) = \{s_1^{-1}(e_1), e_2, s_1^{-1}(e_0)\}$
\end{itemize}
Reading this list top-down and cycling back, observe that each pair of successive faces shares an edge containing $v$. This shows that the vertex figure at $v$ is a pentagon.

It remains to explain why every edge of $S_P(v)$ lies in two faces. Clearly it suffices to consider the base edges $e_1, e_2, e_0$. Let us start with $e_0$. Since $e_0$ only appears as an edge in faces of type 0, every face $F$ containing $e_0$ is such that $F=s(F_2^0)$ for some $s\in G^+(P)$. But, since $F_2^0 = \{e_0, e_1, s_1(e_2)\}$, the only edge that can map to $e_0$ under $s$ is a type-0 edge. But then $s(e_0) = e_0$, implying that either $s = 1$ or $s = s_0$. Thus, there are at most two faces containing $e_0$. Next, consider $e_1$. If $e_1$ lies in a face $F$, then $F$ could have type~0 or type~1. If $F$ has type 0, then $F=s(F_2^0)$ for some $s\in G^+(P)$. But then, as above, the edge of type 1 in $F_2^0$ must be mapped to $e_1$ and thus by (\ref{stabei}), $s = 1$ and $F=F_2^0$. If $F$ has type 1, then $F=s(F_2^1)$ and therefore $ss_1^k(e_1) = e_1$ for some $k$. Then again by (\ref{stabei}), $ss_1^k = 1$ and thus $s \in \langle s_1 \rangle$ and $F = F_2^1$. Hence, $e_1$ lies in two faces. An analogous argument applies to  $e_2$. 

Thus ${S}_P(v)$ is a geometric polyhedron. The remaining statements are clear by construction.
\end{proof}
\smallskip

Note that the inductive argument proving that ${S}_P(v)$ is connected gives rise to an algorithm that allows us to find an edge path between any two vertices in ${S}_P(v)$. 
\smallskip

\begin{remark}
\label{vinclD}
Usually, if $v$ does not satisfy the IPC of (\ref{initialplacem}), then ${S}_P(v)$ still is a geometric polyhedron. There are however rare cases, in particular when the index of $G^+(P)$ in $G(P)$ is 1, where an edge can lie in more than two faces. 
\end{remark}\smallskip

In the situation of Theorem~\ref{thm1}, as $G^{+}(P)$ acts simply transitively on the vertices of $S_P(v)$ and the vertex figures are pentagons, $S_P(v)$ has exactly 10 flag orbits under $G^+(P)$, and thus the number of flags itself is $10\,|G^{+}(P)|$ if $P$ is finite.  In certain instances the full symmetry group $G(S_P(v))$ of $S_P(v)$ is larger than its subgroup $G^+(P)$. In this case there are two possibilities. In the first, the stabilizer of the base vertex $v$ in $G(S_p(v))$ has order $2$ and contains a symmetry interchanging the $p$-gonal face $F_2^1$ and the $q$-gonal face $F_2^2$; in particular, $p=q$ and the $p$-gonal faces and $q$-gonal faces of $S_P(v)$ are congruent under symmetries of $S_P(v)$. In the second, $p=q=3$ and the vertex-stabilizers are isomorphic to $D_5$ (see Proposition~\ref{all3}).
\smallskip

We will see in Section~\ref{section: topology} in thw combinatorial context that the isomorphism type of the snub does not depend on the initial vertex $v$ as long as $v$ satisfies the IPC. In other words, if both $v$ and $v'$ satisfy the IPC, then $S_P(v)$ and $S_P(v')$ are isomorphic as abstract polyhedra. (However, in a geometric context, these snubs would not be congruent or similar in general.)

Not every regular or chiral geometric polyhedron has a geometric dual (but each has an abstract dual). The following observation addresses the relationship between the snubs in the case where geometric duals exist.
\smallskip

\begin{theorem}
Let $P$ be a regular or chiral polyhedron of type $\{p,q\}$ in $\mathbb{E}^3$, let $v\in \mathbb{E}^3$, and suppose $v$ satisfies the IPC of (\ref{initialplacem}) for $G^+(P)=\langle s_1,s_2\rangle$. Further, suppose that $P$ has a geometric dual $P^{\delta}$, and that in the chiral case the snub $S_P(v)$ is constructed from the enantiomorphic form of $P^{\delta}$ associated with the generators $s_2^{-1},s_1^{-1}$ of $G^{+}(P^\delta)$. Then the dual pair of geometric polyhedra $P$ and $P^{\delta}$ have congruent snubs. More precisely, ${S}_{P^{\delta}}(v)$ is congruent to ${S}_P(v)$. 
\end{theorem}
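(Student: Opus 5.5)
The plan is to identify the combinatorial rotation subgroup of $P^\delta$ and its distinguished generators with those of $P$, and then to show that the snub construction is symmetric under the interchange of the roles of $s_1$ and $s_2$ (up to inversion).

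\textbf{Generators of $G^+(P^\delta)$.} In the regular case, duality acts by $(r_0,r_1,r_2)\mapsto(r_2,r_1,r_0)$, as in (\ref{ops}), with respect to a suitable base flag of $P^\delta$. Since the geometric dual is placed so that $G(P^\delta)=G(P)$, the distinguished rotations of $P^\delta$ are
\[
s_1^\delta=r_2r_1=s_2^{-1},\qquad s_2^\delta=r_1r_0=s_1^{-1},\qquad s_0^\delta=s_1^\delta s_2^\delta=r_2r_0=s_0 .
\]
In the chiral case, the enantiomorphic form singled out in the hypothesis has exactly these generators, $s_2^{-1},s_1^{-1}$, and $s_0^\delta=s_2^{-1}s_1^{-1}=(s_1s_2)^{-1}=s_0^{-1}$. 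Here I would check that $s_0$ is an involution, which holds in the regular case; for chiral $P$ one also has $(s_1s_2)^2=1$. Either way $s_0^\delta=s_0$, and $G^+(P^\delta)=\langle s_1,s_2\rangle=G^+(P)$. Hence $v$ satisfies the IPC for $G^+(P^\delta)$ as well, and Theorem~\ref{thm1} applies to both snubs.

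\textbf{Matching the vertices, edges and faces.} Both snubs have vertex set equal to the orbit $G^+(P)v$. For the edges, the base edges of $S_{P^\delta}(v)$ are $\{v,s_2^{-1}v\}$, $\{v,s_1^{-1}v\}$ and $\{v,s_0v\}$. These are $s_2^{-1}(e_2)$, $s_1^{-1}(e_1)$ and $e_0$, so the edge orbits coincide with those of $S_P(v)$, with types $1$ and $2$ interchanged. For the faces, $\langle s_2^{-1}\rangle$ applied to $\{v,s_2^{-1}v\}$ gives $F_2^2$, and likewise the type-1 base face of $S_{P^\delta}(v)$ is $F_2^1$. The type-0 base face of $S_{P^\delta}(v)$ is the triangle
\[
\{v,\ s_0v,\ s_2^{-1}v\}.
\]
This is exactly the face $s_0(F_2^0)=\{s_0(e_1),s_2^{-1}(e_2),e_0\}$ listed in the proof of Theorem~\ref{thm1}. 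As vertex sets, $s_0(F_2^0)=\{s_0v,\,s_0s_1v,\,s_0^2v\}$; using $s_0^2=1$ and $s_0s_1=s_1s_2s_1=s_2^{-1}$ (since $s_1s_2$ is an involution), this becomes $\{v,\,s_2^{-1}v,\,s_0v\}$. So it lies in the $G^+(P)$-orbit of $F_2^0$.

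\textbf{Conclusion and main obstacle.} All vertex, edge and face orbits coincide, so $S_{P^\delta}(v)$ and $S_P(v)$ are identical as sets of vertices, edges and faces, and in particular congruent. If the geometric dual is only positioned up to an isometry $\tau$, so that $G(P^\delta)=\tau G(P)\tau^{-1}$, the same argument gives $S_{P^\delta}(\tau v)=\tau S_P(v)$, which yields the congruence. I expect the main obstacle to be the bookkeeping in the first step: confirming that the base flag of $P^\delta$ can be chosen so that its distinguished rotations are exactly $s_2^{-1},s_1^{-1}$ and not conjugates of them. This is what the chiral hypothesis on the enantiomorphic form secures. The check that the type-0 triangles match, via $s_0(F_2^0)$, is the one genuinely computational point, and I have sketched it above.
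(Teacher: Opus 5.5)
Your proposal is correct and takes essentially the same approach as the paper. You identify the distinguished rotations of $P^\delta$ as $s_2^{-1}, s_1^{-1}, s_0$, note that the $p$- and $q$-gonal base faces are swapped, and check that the type-0 base face of $S_{P^\delta}(v)$ is $s_0(F_2^0)$, so the two snubs are in fact identical (one small labeling slip: the face $F_2^1$ you obtain is the type-2, not the type-1, base face of $S_{P^\delta}(v)$).
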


\begin{proof}
First note that $G^{+}(P^\delta)=G^{+}(P)$ and thus the IPC for $G^{+}(P^\delta)$ and $G^{+}(P)$ are identical conditions. 

Now suppose $P$ is regular, with $G(P) = \langle t_0, t_1, t_2 \rangle$ and  
$s_1 = t_0t_1, s_2 = t_1t_2, s_0 = t_0t_2$. 
Then $G(P^\delta) = \langle t_2, t_1, t_0 \rangle$ and 
$G^{+}(P^\delta)=\langle s'_1,s'_2,s'_0\rangle$ with 
$$s'_1 = t_2t_1 = s_2^{-1},\ \, s_2' = t_1t_0 = s_1^{-1},\, \ 
s'_0 = t_2t_0 = s_0^{-1} = s_0.$$ 
Let us look at the base edges $e'_0,e'_1,e'_2$ of $S_{P^\delta}(v)$. Here, $e'_0 = e_0$ and 
$$ e'_1 = \{v, s'_1(v)\} = \{v, s_2^{-1}(v)\} = s_2^{-1}(e_2), \ \, e'_2 = \{v, s'_2(v)\} = \{v, s_1^{-1}(v)\} = s_1^{-1}(e_1).$$ 
Now for the base faces, 
$${(F')}_2^1 = \{s(e'_1)| s \in \langle s'_1 \rangle\} = \{s(s_2^{-1}(e_2))|s\in \langle s_2^{-1} \rangle\} =  \{s(e_2)|s\in \langle s_2 \rangle\}.$$ 
It follows that ${(F')}_2^1 = F_2^2$. Similarly, ${(F')}_2^2 = F_2^1$. Finally, 
$${(F')}_2^0 = \{e'_0, e'_1, s'_1(e'_2)\} = \{e_0, s_2^{-1}(e_2), s_2^{-1}(s_1^{-1}(e_1))\} = s_0(\{e_0, s_1(e_2), e_1\}) = s_0(F_2^0).$$ 
Thus, the orbits of the base faces of ${S}_P(v)$ will be identical to the orbits of the base faces of ${S}_{P^\delta}(v)$, but the roles of the $p$-gonal faces and the $q$-gonal faces are interchanged. 

Finally, suppose $P$ is chiral. In this case $\delta$ transforms the generators $s_1,s_2$ of $G(P)$ according to 
$(s_{1}, s_{2}) \mapsto (s_2^{-1},s_1^{-1})$ (see~\cite{Chiral 1}). By our assumption on the enantiomorphic form of $P^\delta$,
the elements $s_2^{-1},s_1^{-1}$ are exactly the generators of $G(P^\delta)$ and the type set $I_v$ remains the same, namely $I_v=\{0,1,2\}$. But then the same argument as above yields the desired result.  
\end{proof}

\subsection{Uniformity}
\label{uniformity}

In general, the snub $S_P(v)$ of Theorem~\ref{thm1} is not a uniform polyhedron. By construction, $S_P(v)$ is vertex-transitive and has congruent regular $p$-gonal faces and congruent regular $q$-gonal faces for any choice of $v$. However, the triangle faces (of type 0), while mutually congruent, are usually not equilateral. In fact, if $v$ satisfies the IPC of (3), then ${S}_P(v)$ is uniform if and only if the base face $F_2^0$ of type 0 (with vertices $v, s_0(v),s_1(v)$) is an equilateral triangle, that is, 
\begin{equation}
\label{uni1}
||s_1(v)-v|| = ||s_0(v)-v|| = ||s_1(v)-s_0(v)||.
\end{equation}
If we set $w_1 := s_1(v)-v$, $w_2 := s_0(v)-v$ and $w_3 := s_1(v)-s_0(v)$, then the equations in (\ref{uni1}) above are equivalent to 
\begin{equation}
\label{uni2}
w_1(1)^2+w_1(2)^2+w_1(3)^2 = w_2(1)^2+w_2(2)^2+w_2(3)^2 = w_3(1)^2+w_3(2)^2+w_3(3)^2,
\end{equation}
where $w_i(j)$ denotes the $j^{\text{th}}$ coordinate of $w_i$. If we write $v=(x,y,z)$, the equations in (\ref{uni2}) are quadratic in $x,y,z$. We call them the \textit{uniformity equations} for ${S}_P(v)$. Thus, if $v$ satisfies the IPC of (3), the snub ${S}_P(v)$ is a uniform polyhedron if and only if $v=(x,y,z)$ is a solution of the uniformity equations. We call a solution $v=(x,y,z)$ to the equations~(\ref{uni2}) \textit{acceptable} if $v$ satisfies the IPC of (3) for $G^+(P)$.
\smallskip

The following lemma shows that there are instances of regular polyhedra $P$ where the uniformity equations (\ref{uni2}) do not have an acceptable solution. 

\begin{lemma}
\label{lemma: uniformity conditions}
Let $P$ be a geometric regular polyhedron in $\mathbb{E}^3$, let $v\in\mathbb{E}^3$, and suppose $v$ satisfies the IPC of (\ref{initialplacem}) for $G^+(P)=\langle s_1,s_2,s_0\rangle$. If the generator $s_0$ of $G^{+}(P)$ is a plane reflection, then there does not exist an acceptable solution $v$ to the uniformity equations for ${S}_P(v)$.  
\end{lemma}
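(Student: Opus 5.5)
We show that if $s_0$ is a plane reflection, the type-0 base triangle $F_2^0$, with vertices $v$, $s_0(v)$, $s_1(v)$, can never be equilateral when $v$ satisfies the IPC. Since the uniformity equations (\ref{uni2}) are exactly the condition that $F_2^0$ is equilateral, no acceptable solution can then exist.

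The key tool is the mirror of $s_0$. Let $\Pi$ be the mirror plane of $s_0$. Because $s_0$ is the reflection in $\Pi$, $\Pi$ is the perpendicular bisector of the segment $[v,s_0(v)]$, provided $v\notin\Pi$. The IPC gives $s_0(v)\neq v$, so indeed $v\notin\Pi$ and the edge $e_0$ is nondegenerate. Now suppose $F_2^0$ were equilateral. Then $\|s_1(v)-v\| = \|s_1(v)-s_0(v)\|$, so $s_1(v)$ is equidistant from $v$ and $s_0(v)$. Hence $s_1(v)\in\Pi$, which means $s_0(s_1(v)) = s_1(v)$.

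This forces a nontrivial stabilizer. From $s_0s_1(v)=s_1(v)$ we get $s_1^{-1}s_0s_1(v)=v$, so $s_1^{-1}s_0s_1$ fixes $v$. It is an element of $G^+(P)$, and it is nontrivial because it is conjugate to $s_0\neq 1$. This contradicts the IPC $G^+_v(P)=\{1\}$. Therefore no $v$ satisfying the IPC solves (\ref{uni2}).

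The main obstacle is only to make sure the relevant equality in (\ref{uni1}) is the one placing $s_1(v)$ on the bisecting plane of $e_0$. That equality is $\|s_1(v)-v\|=\|s_1(v)-s_0(v)\|$, and it is part of (\ref{uni1}), so the argument goes through. The other ingredients are routine: the fact that a point equidistant from $x$ and its mirror image lies on the mirror, and the nontriviality of conjugates. Note that only the single distance condition $\|w_1\|=\|w_3\|$ is needed. The regularity of $P$ enters only through the hypothesis that $s_0=r_0r_2$ is a plane reflection.
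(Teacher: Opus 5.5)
Your proof is correct and is essentially the paper's argument: equilaterality of $F_2^0$ places $s_1(v)$ on the mirror of $s_0$, which is the perpendicular bisector of $e_0$, so $s_0$ fixes $s_1(v)$, contradicting $G^+_{s_1(v)}(P)=s_1G^+_v(P)s_1^{-1}=\{1\}$. Phrasing it as the nontrivial conjugate $s_1^{-1}s_0s_1$ fixing $v$ is the same observation, stated at $v$ rather than at $s_1(v)$.
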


\begin{proof}
For the sake of contradiction suppose that there exists an acceptable solution $v$ to the uniformity equations for $S_P(v)$. Then, ${S}_P(v)$ is a uniform polyhedron and $F_2^0$ is regular. It follows that the plane of reflection of $s_0$ (or line of reflection of $s_0$ in 2 dimensions) is the perpendicular bisector of $e_0$ and passes through $s_1(v)$. But then $s_0$ fixes $s_1(v)$ and thus lies in the  stabilizer of $s_1(v)$. This contradicts the fact that $G^{+}_{s_1(v)}(P) = s_1 G^{+}_v(P) s_1^{-1}$ is trivial.  
\end{proof}

Note that no geometrically chiral polyhedron $P$ has $s_0$ as a plane reflection, so Lemma~\ref{lemma: uniformity conditions} has been restricted to regular polyhedra.  
\smallskip

\begin{remark} 
\label{hemicubesnub}
If the snub ${S}_P(v)$ cannot be made uniform for a suitable choice of $v$, there still can exist a uniform faithful realization of the underlying abstract polyhedron of ${S}_P(v)$. An example is given by the snub of the hemi-cube $\{4,3\}_3$, the Petrie-dual of the tetrahedron $\{3,3\}$. Notice that ${S}_{\{4,3\}_3}(v)$ cannot be made uniform, by Lemma~\ref{lemma: uniformity conditions}. However, ${S}_{\{4,3\}_3}(v)$ is combinatorially isomorphic to the standard snub cube, which is a uniform convex polyhedron in 3-space.   
\end{remark}
\smallskip

\subsection{Topology of ${S}_P(v)$} \label{section: topology}

In this section, we show that the geometric snubs discussed in the previous sections are faithful geometric realizations of combinatorial snubs derived from abstract polyhedra. The details are summarized in Theorem~\ref{thm: isomorphisms between wythoffians} below. We begin by explaining how to perform the snub construction on abstract regular polyhedra. \smallskip 

Every abstract polyhedron $\mathcal{P}$ with finite faces (2-faces) and vertex figures can be represented as a map $M$ on a closed surface (see \cite[Chs.~7B,\,7C]{41}). We usually will not distinguish between $\mathcal{P}$ and $M$. If $\Gamma(M)$ denotes the automorphism group of $M$, then $\Gamma(M)\cong\Gamma(\mathcal{P})$ and $\Gamma(M)$ can be realized as a group of homeomorphisms of $M$.

Now suppose that $M$ is a regular map (corresponding to a regular polyhedron $\mathcal{P}$) on a surface $K$, with automorphism group $\Gamma(M) = \langle \rho_0, \rho_1, \rho_2 \rangle$ and combinatorial rotation subgroup $\Gamma^+(M)= \langle \sigma_1,\sigma_2\rangle$, where $\sigma_{1}:=\rho_0\rho_1, \sigma_{2}:=\rho_1\rho_2$. Consider the triangulation $B(M)$ of $K$ obtained as the ``barycentric subdivision" (order complex) of $M$. The triangles in $B(M)$ correspond to the flags of $M$. Pick a (closed) {\it base triangle} $\Delta_1$ in $B(M)$ (corresponding to the base flag $\Phi$ of $\mathcal{P}$) and set $\Delta:=\text{int}(\Delta_1)$. If $\Gamma^+(M)$ has index $2$ in $\Gamma(M)$, then $\Delta':= \text{int}(\Delta_1 \cup \rho_0(\Delta_1))$ is an (open) fundamental region (fundamental triangle) of $\Gamma^+(M)$ on $K$ and includes $\Delta$. Its closure, $\Delta_1 \cup \rho_0(\Delta_1)$, is referred to as the {\em double base triangle} in $B(M)$. If the index is 1, then instead $\Delta$ itself is an (open) fundamental region (triangle) of $\Gamma^+(M)$ on $K$. Either way, we let $z$ be an initial point in $\Delta$ and consider the orbit of $z$ under $\Gamma^+(M)$. Using  $\sigma_1,\sigma_2$, we can construct the {\it snub\/} $S_M(z)$ of $M$ as we did in Section~\ref{snubcon}. By letting $z$ be in $\Delta$ we can achieve that all vertices, edges, and faces of the snub lie on $K$ and that $S_M(z)$ is 2-cell embedded into $K$ if $M$ is orientable; in the non-orientable case, the faces of $S_M(z)$ overlap on $K$. 

One can also perform the snub construction purely combinatorially on the abstract polyhedron $\mathcal{P}$ itself, by taking the orbit of $\Phi$ under $\Gamma^+(\mathcal{P})$ as the vertex set of ${S}_{\mathcal{P}}(z)$ and using $\sigma_1,\sigma_2$ to impose the combinatorial structure on this vertex set as we did in Section~\ref{snubcon}. As the stabilizer of $\Phi$ in $\Gamma^{+}(\mathcal{P})$ is trivial and $z$ was chosen in the (open) fundamental region, the construction will be analogous to the geometric construction presented before. 

This also shows that in the construction of the snub of a regular map $M$ we could have chosen $z$ outside $\Delta$, as long as the stabilizer of $z$ in $\Gamma^{+}(M)$ is trivial (that is, $z$ satisfies a condition like the IPC, for $\Gamma^{+}(M)$). However, for our purposes it is sufficient to let $z\in\Delta$. This has the added benefit that $S_M(v)$ is cellularly embedded on the surface $K$ if $M$ is orientable. In general, this will no longer be true for other choices of $z$.

The above constructions can also be applied to chiral maps (and abstract  polyhedra, respectively), with minor modifications: if $\Gamma(M) = \langle \sigma_1, \sigma_2 \rangle$, then we choose as fundamental region $\Delta'$ for $\Gamma(M)$ the interior of the union of the base triangle $\Delta_1$ and its 0-adjacent triangle (flag); this union is again referred to as the {\it double base triangle\/}. The initial vertex for the snub would again have to be chosen in $\Delta:=\text{int}(\Delta_1)$ to guarantee that the snub is 2-cell embedded into the surface. In the chiral case, the underlying surface $K$ is orientable and also serves as the surface for the snub. As explained in the geometric context, changing the base flag of a chiral polyhedron to its 0-adjacent flag as the new base flag, will result in new distinguished generators of $\Gamma(M)$ (but not in a change of the fundamental region). If $\sigma_1,\sigma_2$ are the distinguished generators associated with the base flag, then $\sigma_1^{-1},\sigma_1^2\sigma_2$ are the distinguished generators associated with the 0-adjacent flag.
\smallskip

\begin{theorem}
\label{thm: isomorphisms between wythoffians}
Let $P$ be a regular or chiral polyhedron in $\mathbb{E}^3$ with finite faces (and vertex figures), and let $G^+(P)=\langle s_1,s_2\rangle$. Let $M$ denote the underlying abstract polyhedron realized as a map on a closed surface $K$, let $B(M)$ denote the barycentric subdivision of $M$ on $K$, and let $\Delta$ denote the interior of the base triangle of $B(M)$. Then the geometric and combinatorial snubs are related as follows. 
\begin{itemize}
    \item[(a)] If $v,v'\in\mathbb{E}^3$ and both $v$ and $v'$ satisfy the IPC of (\ref{initialplacem}) for $G^+(P)$, then the geometric snubs ${S}_P(v)$ and ${S}_P(v')$ are isomorphic. 
        
    \item[(b)] If $z,z'\in\Delta$, then the combinatorial snubs ${S}_M(z)$ and ${S}_M(z')$ are isomorphic. 
        
    \item[(c)] If $v\in\mathbb{E}^3$ and $v$ satisfies the IPC of (\ref{initialplacem}) for $G^+(P)$, and if $z\in\Delta$, then the geometric snub ${S}_P(v)$ is isomorphic to the combinatorial snub ${S}_M(z)$. 
    
    \end{itemize}
\end{theorem}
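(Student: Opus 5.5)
The plan is to show that each snub is determined, up to isomorphism, by a single "regular-representation" poset built from the group $G^+(P)$ and its generators $s_1,s_2$. Once that model is in place, all three parts will follow from identifying the relevant groups and generators.

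\textbf{The group model.} For a group $H$ with distinguished generators $a_1,a_2$, set $a_0:=a_1a_2$ and define an abstract poset $\mathcal{S}(H;a_1,a_2)$ as follows. Its vertices are the elements $h\in H$. Its edges are the sets $\{h,ha_i\}$ for $i=0,1,2$. Its faces are the cosets $h\langle a_1\rangle$ and $h\langle a_2\rangle$ (each cyclically ordered), together with the triples $\{h,ha_0,ha_1\}$. Incidence is inclusion. The claim is that whenever $w$ has trivial stabilizer in $H$, the map $h\mapsto h(w)$ induces an isomorphism from $\mathcal{S}(H;a_1,a_2)$ onto the snub built from $w$.

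\textbf{Verifying the claim.} The map is a bijection on vertices by simple transitivity, which is exactly the IPC (\ref{initialplacem}). By definition (\ref{F1i}) and (\ref{F20}), it sends $\{h,ha_i\}$ to $h(e_i)$, $h\langle a_i\rangle$ to $h(F_2^i)$, and $\{h,ha_0,ha_1\}$ to $h(F_2^0)$. These exhaust the edges and faces of the snub, since every one of them is an orbit image of a base edge or base face. Injectivity on edges and faces follows from two facts established before Theorem~\ref{thm1}: the well-definedness of types, and the stabilizer computations (\ref{stabei}) and (\ref{stabF2i}). Two $h$'s giving the same type-$i$ edge or face therefore differ by an element of the corresponding stabilizer, which matches exactly when the group-theoretic objects coincide. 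Incidence is inclusion of vertex sets on both sides, so it is preserved in both directions.

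\textbf{Deriving (a), (b), (c).} For (a), apply the claim with $H=G^+(P)$ and $w=v$, then again with $w=v'$. Both snubs are then isomorphic to $\mathcal{S}(G^+(P);s_1,s_2)$, hence to each other. For (b), work with $H=\Gamma^+(M)$ (or $\Gamma(M)$ in the chiral case) acting on $K$. A point $z\in\Delta$ lies in the interior of a flag triangle, so its stabilizer in the rotation group is trivial: only the identity maps the base flag to itself. Thus $\mathcal{S}_M(z)\cong\mathcal{S}(\Gamma^+(M);\sigma_1,\sigma_2)$, independently of $z$. For (c), we need a group isomorphism $G^+(P)\to\Gamma^+(M)$ with $s_i\mapsto\sigma_i$; any such isomorphism induces an isomorphism of the group posets. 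In the regular case this is the faithful-realization statement recalled in Section~\ref{snub}, namely $G^+(P)\cong\Gamma^+(\mathcal{P})$ sending distinguished generators to distinguished generators, since both are determined by the action on flags. In the chiral case, $G(P)$ is the group of the realization acting simply transitively on the flag orbit. One must check that if $\mathcal{P}$ is combinatorially regular, then $G(P)=G^+(P)$ corresponds to $\Gamma^+(\mathcal{P})$ and $S_1,S_2$ correspond to $\sigma_1,\sigma_2$. This holds because $S_1,S_2$ act on the base face and base vertex figure exactly as the combinatorial rotations do.

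\textbf{Main obstacle.} The delicate step is (c) in the chiral, combinatorially regular case. The statement of (c) uses $\Gamma^+(M)$, which must match $G(P)$ with the correct generators, and one must ensure $z$ is taken with respect to the same base flag as $v$. The enantiomorphic ambiguity is harmless, because the 0-adjacent generator choice yields an isomorphic (mirror) snub. A secondary, more routine point is to confirm that the face $\{h,ha_0,ha_1\}$ really is a triangle with edges $\{h,ha_0\}$, $\{h,ha_1\}$ and $\{ha_1,ha_1a_2\}$. This uses $a_1a_2=a_0$ and matches $F_2^0=\{e_0,e_1,s_1(e_2)\}$.
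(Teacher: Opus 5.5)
Your proposal is correct and is essentially the paper's proof. The paper defines $\beta\colon S_M(z)\to S_P(v)$, $\tau(z)\mapsto\kappa(\tau)(v)$, directly and checks well-definedness with the same type and stabilizer facts (\ref{stabei}), (\ref{stabF2i}); this is exactly the composite of your two isomorphisms through the model $\mathcal{S}(H;a_1,a_2)$, and your factorization has the advantage of making (a) and (b) immediate.

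One remark in your last paragraph is false, though your proof does not need it. The snub built from the $0$-adjacent generators $a_1^{-1},a_1^2a_2$ is in general \emph{not} isomorphic to the original one. Conjugating by $a_1^{-1}$ turns that pair into $a_1^{-1},a_2^{-1}$, so the question is whether $\mathcal{S}(H;a_1,a_2)\cong\mathcal{S}(H;a_1^{-1},a_2^{-1})$. Suppose $p\neq q$ and $p,q\geq 4$. At each vertex $h$, the type-$0$ edge is the unique edge lying in two triangles. Moreover, $ha_1$ and $ha_2^{-1}$ are the $p$-gon and $q$-gon neighbours of $h$ that share a triangle with that edge. Hence any isomorphism $\psi\colon\mathcal{S}(H;a_1,a_2)\to\mathcal{S}(H;a_1^{-1},a_2^{-1})$ satisfies $\psi(ha_1)=\psi(h)a_1^{-1}$ and $\psi(ha_2)=\psi(h)a_2^{-1}$ for all $h$. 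After composing with a left translation, $\psi$ is therefore a group automorphism of $H$ inverting both $a_1$ and $a_2$, and such an automorphism exists only when the underlying map is reflexible. This fails, for example, for the chiral polyhedra of type $\{4,6\}$ with finite skew faces, which are combinatorially chiral by Pellicer--Weiss, as the paper notes. So matching $\Delta$ with the base flag that defines $s_1,s_2$ is essential, not optional; the theorem's setup supplies this matching. By the same citation, your "chiral but combinatorially regular'' case cannot occur under the finite-face hypothesis.
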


\begin{proof}
We only prove the third part and construct an isomorphism from ${S}_M(z)$ to ${S}_P(v)$. Direct arguments for the first and second  parts are similar (but note that each also follows from the third part). Our proof exploits the fact that the action of $\Gamma^+(M)$ on the vertex set of ${S}_M(z)$ is equivalent to that of $G^+(P)$ on the vertex set of $P$, and that the incidence and the base faces in ${S}_P(v)$ are strictly speaking induced by the incidence and the base faces in ${S}_M(z)$. \smallskip

We will assume that $P$ is regular, as the proof of the chiral case proceeds in almost exactly the same manner. Thus $G(P) = \langle r_0, r_1, r_2\rangle$ and $G^+(P)=\langle s_1, s_2,s_0\rangle$ where $s_1:=r_0r_1$, $s_2:= r_1r_2$, and $s_{0}:=s_1s_2=r_0r_2$. As $M$ is also regular, $\Gamma(M) = \langle \rho_0, \rho_1, \rho_2\rangle$ and $\Gamma^+(M) = \langle\sigma_1,\sigma_2\rangle$ where $\sigma_1 := \rho_0\rho_1$, $\sigma_2 := \rho_1\rho_2$, and $\sigma_0 := \sigma_1\sigma_2=\rho_0\rho_2$. In particular, 
\[\begin{array}{rrcl}
\kappa:& \Gamma(M)& \mapsto& G(P)\\
&\rho_i &\mapsto& r_i
\end{array}\]
is a group isomorphism whose restriction to $\Gamma^+(M)$ is an isomorphism onto $G^+(P)$ mapping $\sigma_i$ to $s_i$ for each $i$. (If $P$ was chiral and $G(P)=\langle s_1,s_2\rangle$, the finiteness of the faces of $P$ would imply that $M$ is abstractly chiral \cite{47}, and so the setup would be $\kappa: \Gamma^+(M)\rightarrow G(P)$ defined by $\sigma_1 \mapsto s_1$ and $\sigma_2 \mapsto s_2$.) \smallskip

We will build an isomorphism $\beta:{S}_M(z) \mapsto {S}_P(v)$ inductively from its restrictions, also denoted $\beta$, to the vertex set and the edge set. The vertex, edge, and face sets of ${S}_M(z)$ are denoted $V_M$, $E_M$, and $F_M$, respectively, and similarly for ${S}_P(v)$.

To begin with, let again $v$ be the base vertex of ${S}_P(v)$ and recall that it satisfies the IPC. We denote the base edges of ${S}_P(v)$ by 
$$e_1^P = \{v,s_1(v)\}, \ e_2^P = \{v,s_2(v)\}, \ e_0^P = \{v,s_0(v)\},$$ 
and the base faces of ${S}_P(v)$ by 
$$F_2^{P,i} = \{g(e_i^P)|g\in \langle s_i \rangle\} \;\, (i=1,2), \;\;\, F_2^{P,0} = \{e_0^P,e_1^P,s_1(e_2^P)\}.$$ 
Similarly, let $z$ be the base vertex of ${S}_M(z)$ lying in $\Delta$ and denote the base edges of ${S}_M(z)$ by  
$$e_1^M = \{z,\sigma_1(z)\}, \ e_2^M = \{z,\sigma_2(z)\},\ e_0^M = \{z,\sigma_0(z)\}$$ and the base faces of ${S}_M(z)$ by 
$$F_2^{M,i} = \{\gamma(e_i^M)|\gamma\in \langle \sigma_i\rangle\} \;\, (i=1,2), \;\;\, F_2^{M,0} = \{e_0^M,e_1^M,\sigma_1(e_2^M)\}.$$ 

On the vertex set we define $\beta$ by
\begin{equation}
\label{betavertex}
\begin{array}{rlcl}
\beta:& V_M& \mapsto& V_P\\
&\tau(z)& \mapsto& \kappa(\tau)(v) \;\;\; (\tau\in \Gamma^+(M)).
\end{array}
\end{equation}
Then $\beta(z) = v$, and as the base vertices have trivial stabilizers, it is straightfoward to check that $\beta$ is well-defined and bijective. 
\smallskip

Note that the actions of $\Gamma(M)$ on $V_M$ and $G(P)$ on $V_P$ are equivalent, so their restrictions to $\Gamma^+(M)$ and $G^+(P)$ are also equivalent. More explicitly, $\beta(\gamma(\tau(z))) = \kappa(\gamma)(\beta(\tau(z)))$ whenever $\gamma,\tau\in\Gamma^{+}(M)$. This equivalence is reflected in the definition of $\beta$. \smallskip 

From here we proceed by extending (\ref{betavertex}) to 
\begin{equation}
\label{betaedge}
\begin{array}{rlcl}
\beta:& E_M& \mapsto& E_P\\
&\tau(e_i^M)& \mapsto& \kappa(\tau)(e_i^P) \;\;\; (\tau\in \Gamma^+(M),\, i=0,1,2),
\end{array}
\end{equation}
and show that this mapping is well-defined and bijective. In fact, since edges have a unique type and the stabilizers of the base edges are as stated as in Section~\ref{polyhedralstruc}, we find that   
\begin{equation}
\begin{aligned}\nonumber
    \tau(e_i^M) = \tau'(e_j^M)  
    &\iff i=j, \  \begin{cases}
        \tau'=\tau, \text{ if } i=1,2 \\   
        \tau' = \tau \text{ or } \tau' = \tau \sigma_0, \text{ if } i=0
    \end{cases} \\[.03in]
    &\iff i=j, \  \begin{cases}
        \{\kappa(\tau')(v), \kappa(\tau')s_i(v)\} = \{\kappa(\tau)v,\kappa(\tau)s_i(v)\} , \text{ if } i=1,2 \\
        \{\kappa(\tau')(v), \kappa(\tau')s_0(v)\} = \{\kappa(\tau)v,\kappa(\tau)s_0(v)\}, \text{ if } i=0
    \end{cases} \\[.03in] 
    &\iff\; \kappa(\tau)(e_i^P) = \kappa(\tau')(e_j^P).
\end{aligned}
\end{equation}
This shows that $\beta$ of (\ref{betaedge}) is well-defined and injective. 
The surjectivity is implied by the surjectivity of $\kappa$. \smallskip

The combined mapping $\beta:V_M \cup E_M \rightarrow V_P \cup E_P$ is incidence preserving in both directions. Suppose $\tau,\varphi\in \Gamma^+(M)$ and vertex $\tau(z)$ of $M$ is contained in edge $\varphi(e_i^M)$. Then 
$$\tau(z) \in \varphi(e_i^M) = \{\varphi(z), \varphi \gamma(z)\}$$ 
with $\gamma = \sigma_1, \sigma_2, \sigma_0$ as $i=1,2,0$. Hence $\tau = \varphi$ or $\tau = \varphi\gamma$, and therefore $\kappa(\tau) = \kappa(\varphi)$ or $\kappa(\tau) = \kappa(\varphi)\kappa(\gamma)$ (with $\kappa(\gamma) = s_1, s_2, s_0$). It follows that 
$$\kappa(\tau)(v)\in \{\kappa(\varphi)(v), \kappa(\varphi)\kappa(\gamma)(v)\} = \kappa(\varphi)(e_i^M),$$ as required. As all steps can be reversed, the mapping is incidence preserving in both directions. \smallskip

Next, we define $\beta$ on the face set by
$$\begin{array}{rlcl}
\beta:& F_M& \mapsto& F_P\\
&\tau(F_2^{M,i})& \mapsto& \kappa(\tau)(F_2^{P,i}) \;\;\; (\tau\in \Gamma^+(M),\, i=0,1,2),
\end{array}$$
and prove that $\beta$ is well-defined and bijective. The surjectivity is clear from the surjectivity of $\kappa$. From the structure of the face stabilizers in (\ref{stabF2i}), and the fact that faces have a unique type, we obtain  
\begin{equation}
    \begin{aligned}\nonumber
        \tau(F_2^{M,i}) = \tau'(F_2^{M,j}) \!&\!\iff\!\! i=j,
        \begin{cases}
            \tau'=\tau\sigma_i^k \text { for some } k, \text{ if } i=1,2 \\
            \tau' = \tau, \text{ if } i=0
        \end{cases} \\[.03in]
        \!&\!\iff\!\! i=j, \begin{cases}
            \kappa(\tau')(F_2^{P,i}) = \kappa(\tau)s_i^k(F_2^{P,i}) = \kappa(\tau)(F_2^{P,i}) 
                                    \text{ for some } k, \text{ if } i=1,2 \\
            \kappa(\tau')(F_2^{P,0}) = \kappa(\tau)(F_2^{P,0}), \text{ if } i=0
        \end{cases} \\[.03in]     
        \!&\!\iff\!\!\kappa(\tau)(F_2^{P,i}) = \kappa(\tau')(F_2^{P,j}) .
    \end{aligned}
\end{equation}
\smallskip

It remains to show that the combined mapping $\beta: V_M \cup E_M \cup F_M \rightarrow V_P \cup E_P \cup F_P$ is incidence preserving. We already checked the incidence preserving property for the vertex-edge pairs. Now suppose we have an edge $\tau(e_i^M)$ incident with a face $\varphi(F_2^{M,j})$,  where $\tau,\varphi\in \Gamma^+(M)$. If $j=1,2$, then all edges of $\varphi(F_2^{M,j})$ are also of type $j$, so we have $i=j$. If $j=0$, then all three types occur among the edges of $\varphi(F_2^{M,j})$. First, we treat the case $j=1,2$ (so now $i=j$) and appeal to the structure of the edge stabilizers: 
\begin{equation}
    \begin{aligned}\nonumber
    \tau(e_i^M) \in \varphi(F_2^{M,i}) = \{\varphi\gamma(e_i^M)|\gamma \in \langle \sigma_i \rangle\} 
        &\!\iff\! \tau = \varphi \gamma \text{ for some } 
                            \gamma \in \langle \sigma_i \rangle\\
        &\!\iff\! \kappa(\tau) = \kappa(\varphi) g \text{ for some } 
                            g \in \langle s_i \rangle\\
        &\!\iff\! \kappa(\tau)(e_i^P) \in \{\kappa(\varphi)g(e_i^P)|g \in \langle s_i \rangle\}=\kappa(\varphi)(F_2^{P,i}).
    \end{aligned}
\end{equation}

\noindent Similarly, if $j=0$ (and $i=0,1,2$), then 
\begin{equation}
    \begin{aligned}\nonumber
    \tau(e_i^M) \in \varphi(F_2^{M,0}) \!=\! \{\varphi(e_0^M),\varphi(e_1^M), \varphi \sigma_1(e_2^M)\}
    &\iff \tau \in \varphi\langle \sigma_0 \rangle \text{ or } \tau = \varphi,\varphi\sigma_1 \\
    &\iff \kappa(\tau) \in \kappa(\varphi)\langle s_0 \rangle \text{ or } \kappa(\tau) =    
      \kappa(\varphi),\kappa(\varphi)s_1\\
    &\iff \kappa(\tau)(e_i^P) \in \{\kappa(\varphi)e_0^P, \kappa(\varphi)e_1^P, \kappa(\varphi)s_1e_2^P\} \\
    &\iff \kappa(\tau)(e_i^P) \in \kappa(\varphi)(F_2^{P,0}).
    \end{aligned}
\end{equation}

\noindent Hence, $\beta$ is incidence preserving on the edge-face pairs. Then $\beta$ is also incidence preserving on the vertex-face pairs, since the incidence between vertices and faces is given by the transitive closure of the incidence between vertices and edges, and between edges and faces. Thus, the combined mapping $\beta$ is an isomorphism, $\beta: {S}_M(z) \rightarrow {S}_P(v)$.
\smallskip

Note that the isomorphism $\beta:{S}_M(z) \rightarrow {S}_P(v)$ is built  inductively from its restrictions to $V_M$ and $E_M$, respectively, so we can think of $\beta$ as $\beta = \{\beta_k\}_{k=0,1,2}$, where $\beta_0: V_M \rightarrow V_P$, $\beta_1:E_M\rightarrow E_P$, $\beta_2: F_M\rightarrow F_P$. The exact relationship between the $\beta_k$ is as follows: 

\noindent
\begin{itemize}
\item[(i)] The map $\beta_0$ is given by 
    $$\begin{array}{rlcl}
    \beta_0:& V_M& \mapsto& V_P\\
            &\tau(z)& \mapsto& \kappa(\tau)(v) \;\;\; (\tau\in \Gamma^+(M)).
    \end{array}$$
    
    \item[(ii)] Then $\beta_1$ is given by 
    $$\begin{array}{rlcl}
    \beta_1: & E_M         &\mapsto& E_P\\
             & \{z_1,z_2\} &\mapsto& \{\beta_{0}(z_1),\beta_0(z_2)\}.
    \end{array}$$
    To verify this, write  
    $$\{z_1,z_2\} = \{\tau(z),\tau\gamma(z)\} = \tau(\{z,\gamma(z)\}) = \tau(e_i^M),$$
    for some $\tau\in\Gamma^{+}(M)$ and with $\gamma = \sigma_1, \sigma_2, \sigma_0$, and observe that under $\beta$ this gets mapped to 
    $$\kappa(\tau)(e_i^P) = \{\kappa(\tau)(v),\kappa(\tau)\kappa(\gamma)(v)\} = \{\beta_0(z_1),\beta_0(z_2)\}.$$ 
    
    \item[(iii)] Finally, $\beta_2$ is given by 
    $$\begin{array}{rlcl}
    \beta_2:& F_M         & \mapsto& F_P\\
          &F=\{f_1,\ldots,f_m\}& \mapsto& \{\beta_1(f_1),\ldots,\beta_1(f_m)\},
    \end{array}$$
    where $F$ is a generic face in $F_M$ with edges $f_1,\dots,f_m$. In order to verify this, look at the faces of types 0, 1, 2 separately. For faces of type $0$, if  
    $$F=\{f_1,f_2,f_3\} = \{\tau(e_0^M), \tau(e_1^M), \tau\sigma_1(e_2^M)\} = \tau(F_2^{M,0})$$ 
    with $\tau\in \Gamma^+(M)$, then $\beta$ maps $F$ to $$\kappa(\tau)(F_2^{P,0}) = \{\kappa(\tau)(e_0^P), \kappa(\tau)(e_1^P), \kappa(\tau)s_1(e_2^P)\} = \{\beta_1(f_1), \beta_1(f_2), \beta_1(f_3)\}.$$ 

    For faces of types $i=1,2$ (here $m=p$ or $m=q$, respectively), if 
    $$F=\{f_1, \dots, f_m\} = \{\tau\gamma(e_i^M)|\gamma\in \langle \sigma_i\rangle \} = \tau(F_2^{M,i}),$$ then under $\beta$ this maps to 
    $$\kappa(\tau)(F_2^{P,i}) = \{\kappa(\tau)g(e_i^P)|g\in \langle s_i\rangle\}  = \{\kappa(\tau\gamma)(e_1^P)|\gamma\in \langle \sigma_1 \rangle\} = \{\beta_1(f_1),\dots, \beta_1(f_m)\}.$$    
\end{itemize}
\smallskip

The recursive setup of $\beta=\{\beta_k\}_{k=0,1,2}$ is similar to Theorem 5A1 in \cite{41}. It uses the important property that the edges of $P$ can be identified with 2-element subsets of the vertex set, and the faces of $P$ with $m$-element subsets of the edge set of $P$. In geometric polyhedra this holds automatically. Therefore it also holds for $M$. 
\end{proof}
\smallskip

We already noted earlier that the snub of an orientable regular polyhedron or a chiral polyhedron (with finite faces) is again orientable. The following theorem says that, more generally, the snub of any geometric regular polyhedron with finite faces is orientable, even if the original polyhedron is non-orientable. 
\smallskip

\begin{theorem}
\label{doublecover}
Let $P$ be a regular or chiral polyhedron in $\mathbb{E}^3$ with finite faces, let $v\in\mathbb{E}^3$, and suppose that $v$ satisfies the IPC of (\ref{initialplacem}) for $G^+(P)$.
Then, ${S}_P(v)$ is orientable.
\end{theorem}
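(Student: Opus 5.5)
We will show orientability combinatorially, by producing a coherent orientation of all faces of ${S}_P(v)$. Since $P$ has finite faces and finite vertex figures, ${S}_P(v)$ is locally finite and is a map on a closed surface, with every edge in exactly two faces (Theorem~\ref{thm1}). A map is orientable if and only if each face can be given a cyclic orientation such that every edge is traversed in opposite directions by its two faces. The key point is that the vertex set is a regular orbit of $G^+(P)$, so every vertex has a unique label $g\in G^+(P)$. We will orient all faces by transporting the orientations of the three base faces. This makes no use of the geometric nature of the elements of $G^+(P)$, and so it works even when $G^+(P)=G(P)$ and $P$ is non-orientable.

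We orient the base faces as follows. Give $F_2^1$ the cyclic order $v\to s_1(v)\to s_1^2(v)\to\cdots$, and give $F_2^2$ the order $v\to s_2(v)\to s_2^2(v)\to\cdots$. For $F_2^0$, with vertices $v,s_1(v),s_0(v)$, choose the direction $v\to s_0(v)\to s_1(v)\to v$. For each $g\in G^+(P)$, orient $g(F_2^i)$ as the image of the base orientation. This is well defined because of the stabilizers (\ref{stabF2i}). The stabilizer of $F_2^0$ is trivial. The stabilizer of $F_2^i$ for $i=1,2$ is $\langle s_i\rangle$, and $s_i$ maps the directed edge $s_i^k(v)\to s_i^{k+1}(v)$ to $s_i^{k+1}(v)\to s_i^{k+2}(v)$, so it preserves the chosen cyclic order. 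In terms of labels, a face orientation consists of directed edges $h\to h\gamma$, and the edges of each type have the following directions:
\begin{itemize}
\item in type-1 faces, every edge is directed $h\to hs_1$;
\item in type-2 faces, every edge is directed $h\to hs_2$;
\item in the type-0 face $g(F_2^0)$, the edges are $g\to gs_0$, then $gs_0\to gs_1$ (which is $gs_0\to gs_0 s_2^{-1}$, since $s_1=s_0s_2^{-1}$), and finally $gs_1\to g$.
\end{itemize}

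It remains to check the three edge types, using the list of faces at $v$ from the proof of Theorem~\ref{thm1}.
\begin{itemize}
\item A type-1 edge $\{h,hs_1\}$ lies in $h(F_2^1)$ with direction $h\to hs_1$, and in $h(F_2^0)$ with direction $hs_1\to h$. These are opposite.
\item A type-2 edge $\{h,hs_2\}$ lies in a type-2 face with direction $h\to hs_2$, and in the type-0 face $hs_2s_1^{-1}(F_2^0)$. Writing $g=hs_2s_1^{-1}$, this face's edge $gs_0\to gs_1$ is $hs_2\to h$. Again the directions are opposite.
\item A type-0 edge $\{h,hs_0\}$ lies in $h(F_2^0)$ with direction $h\to hs_0$, and in $hs_0(F_2^0)$ with direction $hs_0\to hs_0^2=h$, since $s_0$ is an involution. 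These are opposite as well.
\end{itemize}

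The main obstacle is the type-2 edge bookkeeping. There we must identify precisely which translate of $F_2^0$ contains $s_2(e_2)$-type edges and confirm its induced direction, using $s_1=s_0s_2^{-1}$ and $s_0^2=1$. Once this is done, every edge receives opposite orientations from its two faces, so the surface carrying ${S}_P(v)$ is orientable. Theorem~\ref{thm: isomorphisms between wythoffians} is not needed, although it can be invoked to transfer the statement to the combinatorial snub $S_M(z)$.
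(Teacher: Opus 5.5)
Your argument is correct and takes a genuinely different route from the paper.

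\textbf{The paper's route.} The paper never orients faces directly. For non-orientable regular $P$ it invokes the Wilson--Jones results to obtain a smooth orientable two-fold cover $N$ of the underlying map. It then uses $\Gamma^+(N)\cong G^+(P)$ and repeats the isomorphism construction of Theorem~\ref{thm: isomorphisms between wythoffians} to get $S_N(z)\cong S_P(v)$. Orientability is inherited from the snub of an orientable map lying cellularly on its surface.

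\textbf{Your route.} You use only the simply transitive action of $G^+(P)$ on the vertices together with $s_0=s_1s_2$ and $s_0^2=1$. In effect you exhibit $S_P(v)$ as a Cayley map of $G^+(P)$ with local rotation $(s_1,s_0,s_2^{-1},s_2,s_1^{-1})$ at each vertex. Your coherent orientation is just the one induced by this rotation system.

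\textbf{Comparison.} Your approach is more elementary and self-contained: no covering results are needed, and the regular, non-orientable, chiral, finite and infinite cases are handled uniformly. The paper's route gives information yours does not: a specific orientable regular map $N$ whose snub is isomorphic to $S_P(v)$, such as $S_{\{4,3\}_3}\cong S_{\{4,3\}}$. The paper uses this in Section~\ref{finite uniform polyhedra} to identify the underlying surfaces.

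\textbf{A slip to fix.} There is one computational error, in the step you flag as the main obstacle. The type-0 face containing $h(e_2)=\{h,hs_2\}$ is $g(F_2^0)$ with $gs_1=h$, that is $g=hs_1^{-1}$, not $g=hs_2s_1^{-1}$. Equivalently $g=hs_2s_0$, matching the face $s_2s_0(F_2^0)=s_1^{-1}(F_2^0)$ in the proof of Theorem~\ref{thm1}.

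With your $g$ you get $gs_1=hs_2$ and $gs_0=hs_2^2$. So the edge $gs_0\to gs_1$ is $hs_2^2\to hs_2$, which lies on the different edge $hs_2(e_2)$.

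With $g=hs_1^{-1}$ you get $gs_0=hs_1^{-1}s_1s_2=hs_2$ and $gs_1=h$, hence the direction $hs_2\to h$. This is opposite to $h\to hs_2$ in $h(F_2^2)$, so your conclusion stands.
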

  
\begin{proof}
It suffices to consider regular non-orientable polyhedra, as the claim holds in all other cases. Suppose that $P$ is a regular, non-orientable polyhedron. Then, $P$ is combinatorially isomorphic to a non-orientable map $M$ on a closed surface $K_M$. By Wilson~\cite{Wilson} (for the compact case) and Jones~\cite{Jones} (for the general case), there exists a unique regular map $N$ on an orientable compact surface $K_N$ which is a smooth two-fold covering of $M$ and hence of $P$. (Uniqueness of the double cover is not discussed in~\cite{Jones}, but it is not difficult to prove.) Here smoothness simply means that the Schläfli symbols of $N$ and $M$ are the same, $\{p,q\}$. More explicitly, there exists a 2-1 covering projection $f$ from $K_N$ to $K_M$ which maps $N$ to $M$ and is one-to-one around the face centers and vertices of $N$.\smallskip 

We will set up an isomorphism between a combinatorial snub ${S}_N(z)$ of $N$ on $K_N$ and the geometric snub ${S}_P(v)$ in $\mathbb{E}^3$. Alternatively, we could work with a snub of $M$ on $K_M$, but we prefer to work directly with the snub of $P$. Since the snubs of orientable regular maps are orientable, this will show that $\mathcal{S}_P(v)$ is orientable.\smallskip 

To this end, let $\Gamma(N)=\langle \rho_0, \rho_1, \rho_2 \rangle$ and $G(P) = \langle r_0, r_1,r_2\rangle$ $(\cong \Gamma(M)$). Again, $\Gamma(N)$ is viewed as a group of homeomorphisms of $K_N$. Then $\rho_i\mapsto r_i$ $(i=0,1,2)$ defines a surjective homomorphism $\kappa':\Gamma(N) \rightarrow G(P)$ and since $f$ is a twofold covering, the kernel of $\kappa'$ has order 2. In particular, it follows from the proof in \cite{Wilson} (see also \cite{Conder Wilson}) that the restriction of $\kappa'$ to $\Gamma^+(N)$ is an isomorphism. Thus 
\[\begin{array}{rrcl}
\kappa:& \Gamma^+(N)& \mapsto& G^+(P)\\
       & \sigma_i &\mapsto& s_i
\end{array}\]
defines an isomorphism. Note that $G^+(P) = G(P)$, since $P$ is non-orientable; but that $\Gamma^+(N)$ has index 2 in $\Gamma(N)$, since $N$ is orientable.\smallskip 

Next, we turn to the snubs ${S}_N(z)$ and ${S}_P(v)$. Here, $z$ is chosen in $\Delta$ and thus $z$ is moved under $\rho_0$. We construct an isomorphism $\beta$ between ${S}_N(z)$ and ${S}_P(v)$ recursively as $\beta = (\beta_k)_{k=0,1,2}$, as in Theorem \ref{thm: isomorphisms between wythoffians}. Thus, $$\beta_0: V_N \rightarrow V_P, \ \beta_1:E_N \rightarrow E_P, \ \beta_2:F_N \rightarrow F_P,$$ where $\beta_0$ is given by $$\tau(z)\mapsto \kappa(\tau)(v) \ \ (\tau\in \Gamma^+(N)),$$ 
$\beta_1$ by 
$$\{z_1,z_2\} \mapsto \{\beta_0(z_1),\beta_0(z_2)\},$$ 
and $\beta_2$ by
$$\{f_1,\dots,f_n\} \mapsto \{\beta_1(f_1),\dots,\beta_1(f_n)\}.$$ The proof showing that $\beta$ is an isomorphism is identical to the proof given in Theorem~\ref{thm: isomorphisms between wythoffians}. 
\end{proof}
\smallskip

Thus, the snubs of orientable regular polyhedra or chiral polyhedra lie cellularly on the same surface as their underlying polyhedron, and for any snub of a finite non-orientable regular polyhedron $P$, we can find a unique orientable regular map whose snub is combinatorially isomorphic to the snub of $P$. Sometimes this orientable regular map is realizable as a polyhedron in $\mathbb{E}^3$. An example of this occurrence was mentioned in Remark~\ref{hemicubesnub}:\ the snubs of the hemi-cube and cube are isomorphic, $S_{\{4,3\}_3} \cong S_{\{4,3\}}$.
\smallskip

\section{Most uniform polyhedra of snub type are snubs} 
\label{completenessarguments}

Recall that a uniform polyhedron $P$ is said to be of \textit{snub type} $p.3.3.q.3$ if each vertex is surrounded, in cyclic order and up to reversal of orientation, by a regular $p$-gon, two regular triangles, a regular $q$-gon, and another regular triangle, allowing $p$ or $q$ to be 3 or $\infty$ or to coincide. In this section we will explore which uniform polyhedra of snub type (with $q<\infty$) arise as snubs of some regular or chiral polyhedron of type $\{p,q\}$ in $\mathbb{E}^3$. \smallskip

We first dispose of the vertex symbol $3.3.3.3.3$. There are just two uniform polyhedra of this type, and both of them are snubs.

\begin{proposition}
\label{all3}
The icosahedron $\{3,5\}$ and the great icosahedron $\{3,\frac{5}{2}\}$ are the only uniform polyhedra of snub type $3.3.3.3.3$ in $\mathbb{E}^3$. Both are snubs of the regular tetrahedron. 
\end{proposition}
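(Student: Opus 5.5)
The plan is to first pin down the combinatorics, then the symmetry group, and finally to recognise the realization as a snub of the tetrahedron and solve the uniformity equations. A uniform polyhedron $Q$ of type $3.3.3.3.3$ has only equilateral triangles as faces, all of the same edge length since the edge graph is connected, and every vertex has degree $5$.

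\emph{Combinatorics.} Abstractly, $Q$ is a triangulated surface in which every vertex has degree $5$.
\begin{itemize}
\item If $Q$ is finite, then $E=5V/2$ and $F=5V/3$, so $\chi=V/6>0$. Hence $Q$ is either the icosahedron ($V=12$, sphere) or the hemi-icosahedron $\{3,5\}_5$ ($V=6$, projective plane).
\item If $Q$ is infinite, pass to the universal cover of the underlying surface. A simply connected triangulated surface with all vertex degrees $5$ is forced, by developing it outward from one vertex, to be the (finite) icosahedron. This contradicts the cover being infinite, so infinite examples do not exist.
\item The hemi-icosahedron is excluded geometrically. Its $6$ vertices are pairwise adjacent, so all edges having equal length would give $6$ mutually equidistant points in $\mathbb{E}^3$, which is impossible.
\end{itemize}
So $Q$ is combinatorially an icosahedron with $12$ vertices and equilateral faces. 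Its symmetry group $G(Q)$ is finite, fixes the centroid, acts vertex-transitively, and embeds in $\Gamma(\{3,5\})\cong A_5\times C_2$.

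\emph{Main obstacle: extracting a tetrahedral rotation subgroup.} This is the step I expect to be hardest. I want to show that $G(Q)$ contains a subgroup $H=\langle s_1,s_2\rangle$ that acts simply transitively on the vertices, with $s_1,s_2$ of order $3$ and $s_1s_2$ of order $2$; then $H\cong A_4\cong G^+(\{3,3\})$. Here $s_1$ should cycle the vertices of one triangle $T_1$ at a vertex $v$, and $s_2$ those of a non-adjacent triangle $T_2$ at $v$, matching Figure~\ref{fig:base faces} with $p=q=3$. I would first enumerate the subgroups of $A_5\times C_2$ that are transitive on the $12$ vertices; they should be $A_4$, $A_4\times C_2$, $A_5$, $A_5\times C_2$, and possibly a group of order $12$ of type $D_6$. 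Every group on this list except the $D_6$-type one contains a copy of $A_4$ acting regularly, coming from a tetrahedron inscribed so that opposite face-pairs of the icosahedron correspond. The $D_6$-type candidate must be checked by hand. I would try to show that its orbit structure on faces is incompatible with all faces being congruent equilateral triangles, or that it is not realizable as isometries preserving the cyclic order at each vertex.

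\emph{Snub identification and uniformity equations.} Once such an $H$ is found, each face of $Q$ is an $H$-image of one of the base faces $F_2^0,F_2^1,F_2^2$ of $S_{\{3,3\}}(v)$ built from $s_1,s_2$. Therefore $Q=S_{\{3,3\}}(v)$, where $P$ is a regular tetrahedron whose rotation group is $H$, and $v$ satisfies the IPC because $H$ acts freely on the vertices. Theorem~\ref{thm1} confirms the structure $3.3.3.3.3$.

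It then remains to solve the uniformity equations (\ref{uni2}) for $G^+(\{3,3\})$ in explicit coordinates, with $s_1,s_2$ the $3$-fold rotations about two body diagonals. The solutions up to scaling and the IPC should form two families, one for each of the two values of $\|v\|$ relative to the circumradius; they correspond to $v$ on a golden-ratio point of an edge of the octahedron $\{3,4\}$. One family gives the convex icosahedron $\{3,5\}$, and the other gives the great icosahedron $\{3,\tfrac52\}$, whose vertex figure is a pentagram. Conversely, exhibiting these two solutions shows that both polyhedra are snubs of the tetrahedron, which completes the statement.
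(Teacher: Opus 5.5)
Your proposal is correct, but it takes a different route from the paper's proof.

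\textbf{The paper's route.} The paper argues metrically. Since $Q$ is finite and vertex-transitive, its vertices lie on a sphere $S$. The five neighbours of a vertex $x$ lie both on $S$ and on the sphere about $x$ whose radius is the edge length. Hence the vertex figure is a planar equilateral pentagon inscribed in a circle, that is, a regular pentagon or a pentagram, and congruence of the vertex figures leaves only $\{3,5\}$ and $\{3,\frac{5}{2}\}$. The snub statement is then only an existence check: chiral polyhedra are infinite, $p=q=3$ forces $P=\{3,3\}$, and both polyhedra do occur as snubs.

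\textbf{Your route.} You first show that every such $Q$ actually \emph{is} a snub $S_P(v)$ of a regular tetrahedron with $G^+(P)\le G(Q)$. This is the converse question of Section~\ref{completenessarguments} in the case $p=q=3$, which Theorem~\ref{existsigmas} deliberately excludes. You then let the uniformity equations do the classification. This costs a subgroup enumeration in $A_5\times C_2$ and an explicit computation. In return, the paper's terse ``leaving only two possibilities'' step, which is a reconstruction of $Q$ from its vertex figures, becomes an explicit calculation. Your finiteness argument and your exclusion of the hemi-icosahedron (six mutually equidistant points) are also more explicit than anything in the paper.

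\textbf{Points to tighten.}
\begin{itemize}
\item The $D_6$-type candidate $D_3\times C_2$ needs no hand check, because it is not vertex-transitive. Along its $3$-fold axis the twelve vertices form four layers of three, and its orbits are the two outer layers and the two inner layers.
\item Since $A_4$ has no subgroup of index $2$, $H\le SO(3)$, so $H$ is a tetrahedral rotation group. All pairs in it with $s_1^3=s_2^3=(s_1s_2)^2=1$, $s_1,s_2\ne 1$, are conjugate under the full tetrahedral group, so a single coordinate computation suffices.
\item The directions of $s_1,s_2$ must be chosen as in the proof of Theorem~\ref{existsigmas}. Let $e$ be the neighbour of $v$ lying on neither of the two $H$-stabilized triangles $T_1,T_2$ at $v$. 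Freeness shows that the element of $H$ taking $v$ to $e$ is an involution. Take $s_1,s_2$ in the stabilizers of $T_1,T_2$ with $s_1(v)\in T_1$ and $s_2^{-1}(v)\in T_2$ the third vertices of the two triangles on $\{v,e\}$. This forces that involution to equal $s_1s_2$, so the five triangles at $v$ are $F_2^1,F_2^0,s_0(F_2^0),F_2^2,s_1^{-1}(F_2^0)$, as required.
\item Your phrase ``two values of $\|v\|$ relative to the circumradius'' distinguishes nothing, since the scale of $P$ is irrelevant. Take $s_1=(z,x,y)$, $s_2=(-y,-z,x)$, $s_0=(x,-y,-z)$. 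The uniformity equations reduce to $y(x+z)=0$ and $y^2+z^2-x^2+xy+yz+zx=0$. The solution lines through $(\tau,0,1)$ and $(1,0,-\tau)$, with $\tau=\frac{1+\sqrt{5}}{2}$, give $\{3,5\}$ and $\{3,\frac{5}{2}\}$ respectively. The lines through $(1,\pm1,-1)$ lie on $3$-fold axes and violate the IPC.
\end{itemize}
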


\begin{proof}
A uniform polyhedron $P$ in $\mathbb{E}^3$ of snub type has 5-valent vertices. If its vertex symbol is $3.3.3.3.3$, then it has a 
Schl\"afli symbol, $\{3,5\}$, and is combinatorially an icosahedron or a hemi-icosahedron and thus is finite. As $P$ is finite and vertex-transitive, its vertices lie on a sphere $S$ about its center. Also, as the edges of a uniform polyhedron are congruent, the vertices adjacent to a given vertex $x$ all lie on a sphere $S'$ centered at $x$ and thus form a planar vertex figure with vertices on the circle $S\cap S'$. As the faces of $P$ are regular triangles, the vertex figure at $x$ is an equilateral pentagon with vertices inscribed in a circle. There are just two possibilities for the vertex figure:\ a regular pentagon $\{5\}$ or a regular pentagram $\{\frac{5}{2}\}$. As $P$ is vertex-transitive, the vertex figures at different vertices are congruent, leaving only two possibilities for $P$, the icosahedron and great icosahedron. Chiral polyhedra are infinite and cannot have finite snubs. As all entries in the vertex symbol are $3$'s, only the tetrahedron can have an icosahedron or great icosahedron as a snub. Both the (regular) icosahedron and the (regular) great icosahedron are indeed snubs of the tetrahedron.
\end{proof}
\smallskip

\begin{figure}[ht]
    \centering
    \includegraphics[height=6cm]{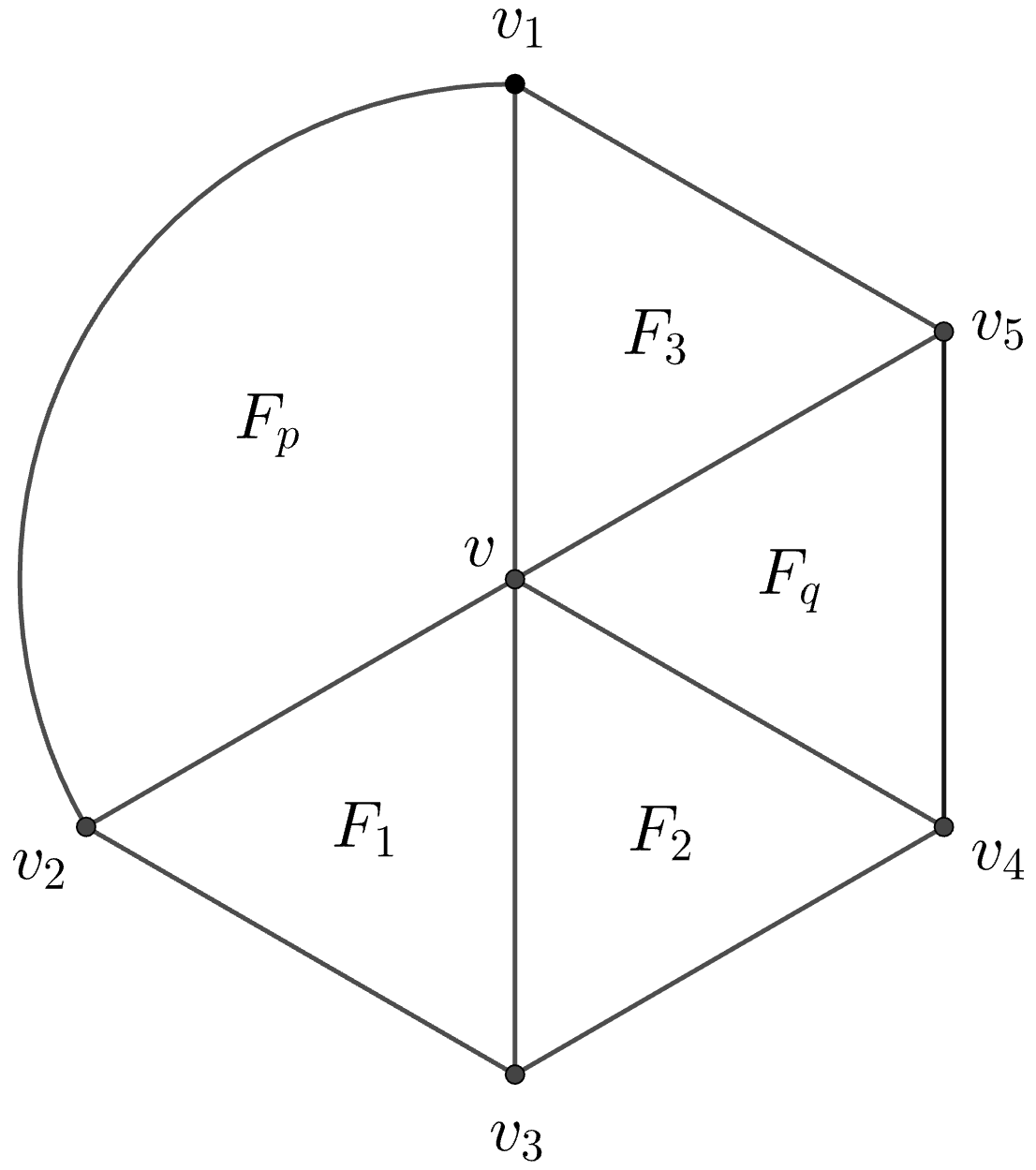}
    \caption{Adjacent vertices and faces at vertex $v$}
    \label{fig: p is q is 3 uniform}
\end{figure}

Next we turn to vertex symbols with four $3$'s. If $q=3$ and $p\neq 3$ (say), so that the type is $p.3.3.3.3$, we can designate some of the triangular faces of $P$ as ``special triangles" and denote them $3^*$, so that each vertex lies in exactly one special triangle, in a manner consistent with the vertex symbol $p.3.3.3^*.3$. To see this, consider the neighborhood of a vertex $v$ as in Figure~\ref{fig: p is q is 3 uniform}. We will analyze what faces (not shown in the figure) can be incident to the edges $\{v_2,v_3\}, \{v_3,v_4\},\{v_4,v_5\}$ and $\{v_5,v_1\}$. First, notice that $\{v_5,v_1\}$ and $\{v_2,v_3\}$ must both lie in two triangles. If both $\{v_3,v_4\}$ and $\{v_4,v_5\}$ also lie in two triangles, then we break the vertex symbol at the new vertices of these triangles. Thus, one of $F_q$ or $F_2$ must neighbor a $p$-face, and it follows that the other must neighbor a triangle. Without loss of generality, assume that the face surrounded by triangles is $F_q$. Thus, $F_q$ is the only triangle containing $v$ which does not share an edge with a $p$-gon. We will call such a triangle a \textit{special triangle}. This feature occurs at every vertex of $P$, that is, every vertex has a vertex symbol $p.3.3.3^*.3$ where $3^*$ denotes a special triangle. Note that the elements of the symmetry group $G(P)$ map special triangles to special triangles. \smallskip

Note that the vertex stabilizers of a uniform polyhedron $P$ of snub type $p.3.3.q.3$ in the symmetry group $G(P)$ have orders $1$ or $2$, unless $p=q=3$ (and $P$ is $\{3,5\}$ or $\{3,\frac{5}{2}\}$, by Proposition~\ref{all3}). This is forced by the vertex symbol, as well as considerations as above if $p=3$ or $q=3$. Moreover, if the vertex-stabilizers are non-trivial, then $p=q$ and the $p$-face and $q$-face at a vertex are congruent (under an element of the vertex-stabilizer); however, these  conditions may not be sufficient to guarantee a non-trivial vertex stabilizer.\smallskip

As we saw in previous sections, the snubs of regular or chiral polyhedra often give rise to uniform polyhedra of snub type. We now discuss the converse: which uniform polyhedra $P$ of snub type are snubs? We find that the answer is ``most", although this is hard to quantify exactly.

\begin{figure}[ht]
    \centering
    \includegraphics[height=6cm]{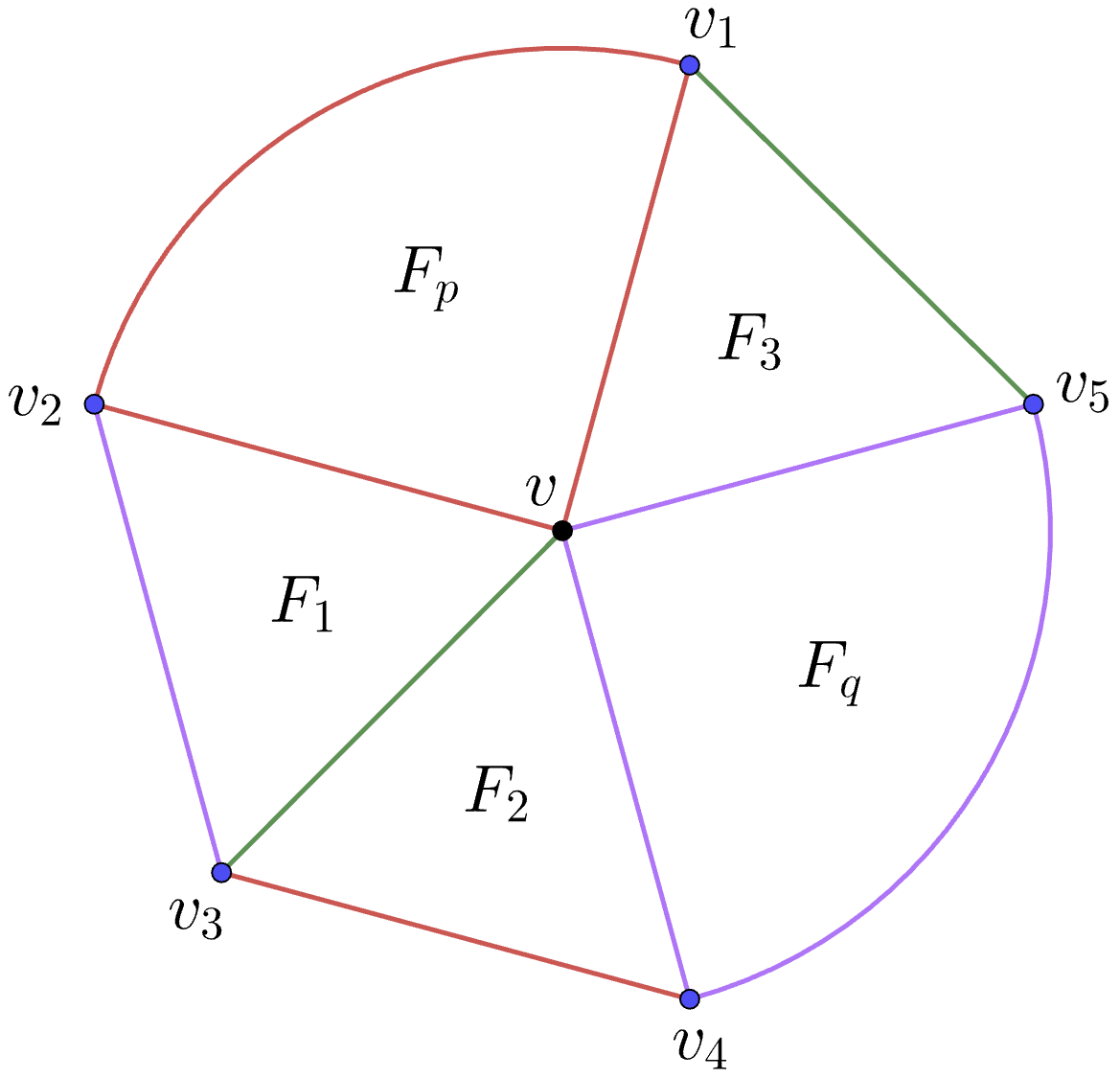}
    \caption{Faces and vertices around $v$}
    \label{fig: uniform reference}
\end{figure}

Proceeding from a uniform polyhedron $P$ of snub type $p.3.3.q.3$ we first identify suitable generators of $G(P)$, or of a suitable subgroup of $G(P)$,  which later will serve as distinguished generators when this group is considered as the combinatorial rotation subgroup of the symmetry group of a regular or chiral polyhedron. \smallskip

\begin{theorem}
\label{existsigmas}
Let $P$ be a uniform polyhedron in $\mathbb{E}^3$ of snub type $p.3.3.q.3$. Suppose that the $p$-gonal faces and the $q$-gonal faces are not congruent (this holds trivially if $p\neq q$). Let $v$ be a vertex of $P$, and let the faces around $v$ be labeled as in Figure~\ref{fig: uniform reference}. Then $G(P)$ contains elements $\sigma_1,\sigma_2$ (which are unique) with the following properties:
\noindent\begin{itemize}
\item[(a)] If $p<\infty$, then $\sigma_1$ cycles through the vertices of $F_p$ and has order $p$. If $p=\infty$, then $\sigma_1$ shifts the vertices of $F_p$ one step along and has infinite order.
\item[(b)] If $q<\infty$, then $\sigma_2$ cycles through the vertices of $F_q$ and has order $q$. If $q=\infty$, then $\sigma_2$ shifts the vertices of $F_q$ one step along and has infinite order.
\item[(c)] $\sigma_0:=\sigma_1\sigma_2$ is an involution.
\item[(d)] $G(P)=\langle\sigma_1,\sigma_2\rangle$.
\end{itemize}
\end{theorem}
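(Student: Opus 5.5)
The plan is to exploit vertex-transitivity together with the fact, noted just before the theorem, that vertex stabilizers in $G(P)$ are trivial under our hypothesis. Since the $p$-gonal and $q$-gonal faces are not congruent and $p.3.3.q.3$ is not $3.3.3.3.3$, any symmetry fixing $v$ must fix $F_p$ and $F_q$ and preserve the cyclic arrangement of the five faces around $v$ (Figure~\ref{fig: uniform reference}). When $p=3$ or $q=3$ we use instead the special triangles $3^*$ discussed above, which are preserved by $G(P)$. Such a symmetry either fixes all five neighbors of $v$ or acts as a reflection of the vertex figure. A reflection would have to swap the two sides around $v$; the only candidate is the one swapping $F_p$ and $F_q$, which is excluded. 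Hence the stabilizer $G(P)_v$ is trivial, because an isometry fixing $v$ and its neighbors fixes an affinely spanning set, or else acts trivially on the connected polyhedron by propagation. Consequently $G(P)$ acts simply transitively on the vertices, and any element is determined by the image of $v$. This gives uniqueness in (a)–(c).

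For existence, let $v'$ be the vertex following $v$ along $F_p$ in the chosen orientation. By transitivity there is $g\in G(P)$ with $g(v)=v'$. The face $g(F_p)$ is a $p$-face containing $v'$. Since $v'$ lies in exactly one $p$-face (the vertex symbol has one $p$), and $p$-faces are not congruent to $q$-faces, we get $g(F_p)=F_p$. Thus $g$ induces a symmetry of the polygon $F_p$ sending $v$ to $v'$. We must check that it is the rotation (shift) rather than a reflection of $F_p$. The reflection option would also reverse the local cyclic order at $v$ relative to $v'$; we compare the positions of $F_q$ and of the special/adjacent triangles around $v$ and $g(v)$. Here the orientation of the pentagonal vertex figure is the key point: the triangle between $F_p$ and $F_q$ on one side differs from the triangle on the other side, since one triangle touches $F_q$ and the other does not. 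This forces $g$ to act on $F_p$ as a one-step shift, and we set $\sigma_1:=g$. It then has order $p$ (or infinite order if $p=\infty$) by simple transitivity. Define $\sigma_2$ analogously on $F_q$, choosing directions so that $\sigma_1\sigma_2$ is the correct half-turn-like element. Settling exactly this orientation bookkeeping is the main obstacle; I would handle it by explicitly tracking the images of the five neighbors $v_1,\dots,v_5$.

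For (c), compute $\sigma_0(v)=\sigma_1\sigma_2(v)$. With the directions chosen as in the snub construction ($s_0=s_1s_2$ and triangle $F_2^0=\{v,s_0(v),s_1(v)\}$), $\sigma_0(v)$ is the neighbor $w$ of $v$ joined by the edge shared by the two triangles not adjacent to $F_p$ in a specific way. Likewise $\sigma_0(w)=v$, by checking the image of the flag configuration. Hence $\sigma_0^2$ fixes $v$, and so $\sigma_0^2=1$ by trivial stabilizers; also $\sigma_0\ne1$.

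For (d), let $H=\langle\sigma_1,\sigma_2\rangle$. By simple transitivity it suffices to show that $H$ is transitive on vertices. The neighbors of $v$ are $\sigma_1^{\pm1}(v)$, $\sigma_2^{\pm1}(v)$, $\sigma_0(v)$, all in $Hv$. Applying the elements of $H$ shows that $Hv$ is closed under adjacency, so by connectivity of the edge graph $Hv$ is the full vertex set. Then for $g\in G(P)$ we pick $h\in H$ with $h(v)=g(v)$, and $h^{-1}g\in G(P)_v=\{1\}$ gives $g=h$.
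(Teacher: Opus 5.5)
Your outline is correct, and for (a)--(c) it is essentially the paper's argument. The steps are the same: trivial vertex stabilizers (which you derive yourself, while the paper takes them from the discussion preceding the theorem), $g(F_p)=F_p$ because $v'$ lies on only one $p$-gon, exclusion of the flip, and $\sigma_0^2(v)=v$.

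Two remarks on this part. First, the step you call the main obstacle needs no orientation bookkeeping, and your sketch of it is incomplete as written. You only record an asymmetry of the vertex figure at $v$. To finish along those lines you would still have to show that at $v'$ the triangle on the edge $\{v,v'\}$ is adjacent to the $q$-gon, which takes a further argument. It is simpler to use the trivial stabilizers you already have, as the paper does. If $g$ acted on $F_p$ as a reflection with $g(v)=v'$, it would interchange $v$ and $v'$ and fix the edge $\{v,v'\}$ and $F_p$. Hence it would fix the other face on that edge, the triangle $\{v,v',x\}$ with $x=v_3$ in the figure, and therefore fix $x$, forcing $g=1$.

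Likewise (c) is a two-line check once the directions are $\sigma_1(v)=v_2$ and $\sigma_2(v)=v_5$. Since symmetries preserve face adjacency, $\sigma_1(v_5)=v_3$, $\sigma_2(v_3)=v_1$ and $\sigma_1(v_1)=v$. So $\sigma_0$ interchanges $v$ and $v_3$, the ends of the edge shared by the two consecutive triangles at $v$. Note that one of these, $\{v,v_2,v_3\}$, does share an edge with $F_p$, contrary to your description.

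Second, your ``or else'' branch only shows that $g$ acts trivially on $P$. For example, for the planar snub hexagonal tiling $6.3.3.3.3$ the reflection in its plane fixes every vertex. So, like the paper, you are tacitly identifying a symmetry with its action on $P$.

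For (d) your route differs from the paper's and is simpler. You note that the five neighbors of $v$ are $\sigma_1^{\pm1}(v)$, $\sigma_2^{\pm1}(v)$ and $\sigma_1\sigma_2(v)$. Hence the orbit of $v$ under $\langle\sigma_1,\sigma_2\rangle$ is closed under adjacency and, by connectivity of the edge graph, is the whole vertex set.

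The paper instead builds, by induction along $(p,q)$-paths (alternating $p$-gons and $q$-gons sharing vertices), an element of $A$ rotating each $p$- or $q$-gonal face. It then transports $v$ to an arbitrary vertex face by face. This relies on the fact, stated there only as an observation, that any two such faces are joined by a $(p,q)$-path. Your argument uses the type-$0$ edges directly and needs nothing beyond axiom (a) for polyhedra and the trivial stabilizers.
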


\begin{proof} 
We present the proof under the assumption that $p\neq q$ and $p,q>3$. Very similar arguments will also work in the cases when $p=q$ and the $p$-gons and $q$-gons are incongruent, or when $p\neq 3$ and $q=3$ (then the special triangles are the $q$-gons). Note that the case $p=q=3$ is excluded by our assumption on non-congruence (see Proposition \ref{all3}). \smallskip

Consider the neighborhood of vertex $v$ as described in Figure \ref{fig: uniform reference}. Since $P$ is uniform, $G(P)$ acts transitively on the vertices of $P$ and by the assumptions on $p,q$, the vertex-stabilizers are trivial. Thus, there exists (a unique) $\sigma_1 \in G(P)$ such that $\sigma_1(v) = v_2$. Note that since each vertex of $P$ lies in exactly one $p$-gon, and $F_p$ is the $p$-gon containing $v$ and $v_2$, we must have $\sigma_1(F_p)=F_p$. Then, viewed as a combinatorial automorphism of $F_p$, the element $\sigma_1$ can only cycle or shift the vertices of $F_p$ one step along or be an involution interchanging $v$ and $v_2$. The latter is impossible since then $\sigma_1$ would fix a vertex, $v_3$.\smallskip

An analogous argument shows that there exists (a unique) $\sigma_2 \in G(P)$ mapping $v$ to $v_5$ such that $\sigma_2$ either cycles through or shifts (by one step) the vertices of $F_q$. \smallskip

Now let $\sigma_0 := \sigma_1 \sigma_2$. Then, using the notation of Figure~\ref{fig: uniform reference} and bearing in mind that $\sigma_1$ and $\sigma_2$ preserve adjacency of faces, we can conclude that  $\sigma_1\sigma_2(v) = \sigma_1(v_5) = v_3$ and $\sigma_1\sigma_2(v_3) = \sigma_1(v_1) = v$. Thus, $\sigma_0^2(v) = v$ on purely combinatorial grounds, and thus $\sigma_0^2 = 1$. Thus $\sigma_0$ has order 2.  \smallskip

The last part of the theorem will follow easily once we have established  that $A:=\langle\sigma_1,\sigma_2\rangle$ acts transitively on the  vertices. In fact, as a vertex-transitive subgroup of a simply vertex-transitive group, $G(P)$, the group $A$ must coincide with $G(P)$. The proof of the vertex-transitivity of $A$ relies on the observation  that for any two $p$-gonal or $q$-gonal faces $H$ and $F$ there exists a finite sequence of $p$-gonal or $q$-gonal faces and of vertices, 
$$H=H_0,w_1,H_1,w_2,H_2,\ldots,w_{k-1},H_{k-1},w_k,H_k = F,$$
in which successive faces $H_{i}$ and $H_{i+1}$ share a vertex, $w_{i+1}$, and $p$-gons and $q$-gons alternate. We call such a sequence a $(p,q)$-{\it path\/} of length $k$ joining $H$ to $F$.
\smallskip

First we show that for any $p$-gonal or $q$-gonal face $F$ there exists an element $\sigma=\sigma_F\in A$ which fixes $F$ and cycles through (or shifts) the vertices of $F$ in order. The point here is that $\sigma\in A$. In fact, under the assumptions on $p,q$ the vertex-transitivity of $G(P)$ also implies the transitivity on the $p$-gonal faces and on the $q$-gonal faces, and so the existence of such an element $\sigma$ in $ G(P)$ is clear from the existence of $\sigma_1$ and $\sigma_2$. Now to prove that $\sigma$ can be chosen in $A$, we consider $(p,q)$-paths from $F_q$ to faces $F$ and prove the statement by induction on the length of the shortest such $(p,q)$-path. If $F=F_q$ we may take $\sigma:=\sigma_2$. Now, proceeding inductively, if $$F_q=H_0,w_1,H_1,w_2,H_2,\ldots,w_{k-1},H_{k-1},w_k,H_k = F$$ is a shortest $(p,q)$-path from $F_q$ to $F$, then the sub-path joining $F_q$ to $H_{k-1}$ clearly is a shortest $(p,q)$-path from $F_q$ to $H_{k-1}$ and thus the inductive hypothesis provides an element $\sigma_{H_{k-1}}\in A$ which fixes $H_{k-1}$ and cycles through (or shifts) the vertices of $H_{k-1}$ in order. In particular, since $H_{k-1}$ and $F$ share a vertex, $F=\sigma_{H_{k-1}}^j(H_{k-2})$ for some $j$. Then $\sigma_{F} := \sigma_{H_{k-1}}^{j} \sigma_{H_{k-2}}\sigma_{H_{k-1}}^{-j}$ gives us the desired element in $A$ for $F$.\smallskip

Finally, to establish the vertex-transitivity of $A$, let $w$ be a vertex and $F$ be a $p$-gon or a $q$-gon at $w$. Connect $v$ and $w$ via a $(p,q)$-path from $F_q$ to $F$ as above; bear in mind that $v$ is a vertex of $F_q$. Also, set $w_0:=v$ and $w_{k+1}:=w$. Since $w_i$ and $w_{i+1}$ both lie in $H_{i}$ there exists $\sigma_{H_{i}} \in A$ such that $\sigma_{H_{i}}^j(w_i) = w_{i+1}$ for some $j$ (depending on $i$). Thus, moving along the $(p,q)$-path from $v$ to $w$, we can map a vertex to the next vertex by an element of $A$, so that the composition of all these mappings takes $v$ to $w$. Thus $A$ is vertex-transitive. \smallskip
\end{proof}
\smallskip

The geometry of the generators $\sigma_1,\sigma_2$ of $G(P)$ in Theorem~\ref{existsigmas} is implied by the geometry of the uniform polyhedron $P$. In particular, $\sigma_1$ is a rotation or rotatory reflection if $p<\infty$, or a screw translation, glide reflection, or translation if $p=\infty$; a  similar statement holds for $\sigma_2$ and $q$. The involution $\sigma_0$ is a half-turn or a point reflection, and  interchanges $v$ and $v_3$. Note that $\sigma_0$ cannot be a plane reflection as otherwise it would fix a vertex, $v_2$. \smallskip

\begin{remark}
\label{remarkone}
Let $P$ be a uniform polyhedron of snub type $p.3.3.q.3$. If $p=q>3$ and the $p$-gonal faces and $q$-gonal faces of $P$ are congruent, then the vertex-stabilizers in $G(P)$ have orders 1 or 2. We do not know if uniform polyhedra with congruent $p$-gons and $q$-gons and trivial vertex stabilizers actually exist. However, if the vertex-stabilizers in $G(P)$ are non-trivial then the $p$-gons and $q$-gons at a vertex are congruent under an element of the stabilizer of that vertex. In this case examples do exist and similar arguments as in the proof of Theorem~\ref{existsigmas} show that $\sigma_1$ and $\sigma_2$ exist here as well (satisfying (a), (b), and (c)) and are unique (!), and that $A:=\langle\sigma_1,\sigma_2\rangle$ is a vertex-transitive subgroup of $G(P)$ of index $1$ or $2$. The snub of the square tessellation $\{4,4\}$ is an example where the index is~2 (here the vertex stabilizers in $A$ are trivial). 
\end{remark}
\smallskip 

The existence of the generators $\sigma_1,\sigma_2$ will enable us to establish the remarkable fact that most uniform polyhedra of snub type are snubs of regular or chiral polyhedra. Suppose $P$ is a uniform polyhedron of snub type $p.3.3.q.3$ with $q<\infty$, let $\sigma_1,\sigma_2$ be as in Theorem~\ref{existsigmas} or as in the latter case of Remark~\ref{remarkone}, and let $A:=\langle\sigma_1,\sigma_2\rangle$. In most cases we actually have $A=G(P)$, although our arguments will not rely on this. Again, let $v$ be a vertex of $P$ (with a neighborhood as in Figure~\ref{fig: uniform reference}), and let 
$u$ be the center of the (finite) face $F_q$. We now generate a regular or chiral geometric polyhedron $Q$ from $u$ and $A$ as an orbit structure, and then show that $Q$ has $P$ as a snub. To begin with, we let
\begin{equation}
\label{conversebase}
K_0:=u,\; K_1:=\{u,\sigma_1\sigma_2(u)\} = \{u,\sigma_1(u)\},\; 
K_2:=\{\sigma(u)|\sigma\in \langle \sigma_1 \rangle\},
\end{equation}
respectively, denote the base vertex, base edge, and base face of $Q$. Then, the vertex set, edge set, and face set of $Q$ are given by the orbits of the base vertex, base edge, and base face under $A$. In the following theorem we prove that, under relatively mild conditions, $Q$ indeed is a geometric polyhedron with the desired features. We let $A_{K_0}$, $A_{K_1}$, and $A_{K_2}$ denote the stabilizers of $K_0$, $K_1$, and $K_2$ in $A$, respectively, and observe that
\begin{equation}
\label{stabcond}
\langle \sigma_2 \rangle \subseteq A_{K_0},\; 
\langle \sigma_1\sigma_2 \rangle\subseteq A_{K_1},\;
\langle \sigma_1 \rangle\subseteq A_{K_2}.
\end{equation}
The occurrence of equality in these inclusions turns out to be nearly sufficient to conclude that $Q$ has the desired properties.
\smallskip

\begin{theorem}
\label{thm: completeness condition}
Let $P$ be a uniform polyhedron in $\mathbb{E}^3$ of snub type $p.3.3.q.3$ (with $q<\infty$). Suppose that the $p$-gonal faces and the $q$-gonal faces are not congruent. Assume further that a $p$-gonal face and a $q$-gonal face of $P$ cannot share more than one vertex of $P$. Let $v$ be a vertex of $P$, let $\sigma_1,\sigma_2$ be as in Theorem~\ref{existsigmas}, let $A:=\langle\sigma_1,\sigma_2\rangle$, and let 
$K_0$, $K_1$, $K_2$, and $Q$ be as above.  If $A_{K_0} = \langle \sigma_2 \rangle$, $A_{K_1} = \langle \sigma_1\sigma_2 \rangle$, and $A_{K_2} = \langle \sigma_1 \rangle$ (that is, the inclusions in (\ref{stabcond}) are  equalities), then $Q$ is a directly regular or chiral geometric polyhedron and $P=\mathcal{S}_Q(v)$. Thus $P$ is a snub of $Q$.
\end{theorem}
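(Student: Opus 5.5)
We will verify directly that $Q$ satisfies the axioms (a)--(d) of Section~\ref{skelpo}, that $A$ acts on $Q$ as the group generated by $\sigma_1,\sigma_2$ playing the roles of distinguished rotations $s_1,s_2$, and finally that the snub construction applied to $Q$ with initial vertex $v$ reproduces $P$. Throughout we use that $A$ acts simply transitively on the vertices of $P$ (vertex stabilizers in $G(P)$ are trivial under the non-congruence hypothesis, as in the proof of Theorem~\ref{existsigmas}).

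\textbf{Step 1 (vertices of $Q$ and the $q$-gons of $P$).} The vertices of $Q$ are the points $\tau(u)$, $\tau\in A$, and since $A_{K_0}=\langle\sigma_2\rangle$ these correspond bijectively to cosets $\tau\langle\sigma_2\rangle$, hence to the $q$-gonal faces $\tau(F_q)$ of $P$. Distinct $q$-gons have distinct centers precisely because the stabilizer condition holds. \textbf{Step 2 (edges).} $K_1=\{u,\sigma_1(u)\}$ joins the centers of $F_q$ and $\sigma_1(F_q)$; these two $q$-gons are those meeting the $p$-gon $F_p$ in the consecutive vertices $v,v_2$. Using $A_{K_1}=\langle\sigma_0\rangle$ with $\sigma_0$ an involution (Theorem~\ref{existsigmas}(c)), edges of $Q$ correspond to pairs of $q$-gons of $P$ that are joined through a common $p$-gon via adjacent vertices, i.e.\ to the type-$0$ edges of $P$. \textbf{Step 3 (faces).} $K_2$ is the orbit of $u$ under $\langle\sigma_1\rangle$, i.e.\ the polygon of centers of the $q$-gons around $F_p$; since $\sigma_1$ cycles (or shifts) $F_p$ and acts with $A_{K_2}=\langle\sigma_1\rangle$, $K_2$ is a $p$-gon (or apeirogon) and its points are distinct. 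Here the hypothesis that a $p$-gon and a $q$-gon share at most one vertex is used: it guarantees that the $p$ $q$-gons around $F_p$ are distinct, so $K_2$ really has $p$ distinct vertices rather than wrapping around early. Faces of $Q$ then correspond to $p$-gons of $P$.

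\textbf{Step 4 (polyhedral axioms).} Connectivity of the edge graph follows from $A=\langle\sigma_1,\sigma_2\rangle$ exactly as in the proof of Theorem~\ref{thm1}: $\sigma_2$ fixes $u$ and $\sigma_1$ moves $u$ along an edge, so any word reaches $\tau(u)$ by an edge path. The vertex figure at $u$ is the $q$-cycle of edges $\sigma_2^j(K_1)$, consecutive ones lying in the face $\sigma_2^j(K_2)$ (note $\sigma_2^{-1}\sigma_1^{-1}=\sigma_0$ gives $K_1\subset \sigma_2(K_2)$ as well), hence connected. Each edge lies in exactly two faces: faces containing $K_1$ are $\tau(K_2)$ with $\tau\in\langle\sigma_1\rangle$ or $\tau\in\sigma_0\langle\sigma_1\rangle$, and the stabilizer conditions show these are exactly $K_2$ and $\sigma_0(K_2)$, which are distinct since $A_{K_2}=\langle\sigma_1\rangle\not\ni\sigma_0$. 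Discreteness is inherited from discreteness of $P$ (finitely many faces of $P$ in a compact set, and each face of $Q$ lies within bounded distance of its $p$-gon of $P$). Then $A$ acts on flags of $Q$ with the stabilizer data forcing exactly one or two orbits with adjacent flags in different orbits; thus $Q$ is directly regular or chiral with $G^+(Q)\supseteq A$ and distinguished rotations $\sigma_1,\sigma_2$. I expect this step—ruling out that $Q$ has extra coincidences, and pinning down that $G^+(Q)$ equals $A$ rather than something larger—to be the main obstacle; I would handle it by working with the base flag $\{K_0,K_1,K_2\}$ and noting that the defining data of $G^+(Q)$ are exactly $s_1=\sigma_1,s_2=\sigma_2$, so $G^+(Q)=\langle\sigma_1,\sigma_2\rangle=A$ by definition.

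\textbf{Step 5 ($P=S_Q(v)$).} Since $A$ acts freely on the vertices of $P$, $v$ satisfies the IPC for $G^+(Q)=A$. The base edges $\{v,\sigma_i(v)\}$ are $\{v,v_2\},\{v,v_5\},\{v,v_3\}$, the base faces $F_2^1,F_2^2$ are $F_p,F_q$, and $F_2^0=\{e_0,e_1,\sigma_1(e_2)\}$ is the triangle $v,v_2,v_3$ of $P$. As both $P$ and $S_Q(v)$ are the $A$-orbits of these same elements (every face of $P$ is in one of three $A$-orbits by vertex-symbol considerations), they coincide.
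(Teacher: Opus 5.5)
Your proposal is correct in outline, but it takes a genuinely different route from the paper. The paper never checks axioms (a)--(d) for $Q$ by hand. Instead it observes that $\sigma_1^p=\sigma_2^q=(\sigma_1\sigma_2)^2=1$, and that the shared-vertex hypothesis gives the intersection property $\langle\sigma_1\rangle\cap\langle\sigma_2\rangle=\langle\sigma_1\rangle\cap\langle\sigma_0\rangle=\langle\sigma_2\rangle\cap\langle\sigma_0\rangle=\{1\}$. It then invokes the coset construction for groups with this property to obtain an abstract directly regular or chiral polyhedron $\mathcal{Q}$, whose faces are the left cosets of $\langle\sigma_2\rangle,\langle\sigma_0\rangle,\langle\sigma_1\rangle$; on $\mathcal{Q}$, $A$ has two flag orbits with adjacent flags in distinct orbits. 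Finally it uses $A_{K_i}=A^i$ to show that $\varphi A^i\mapsto\varphi(K_i)$ is an incidence isomorphism onto $Q$. That route gets the polytope axioms and the flag-orbit structure from a known theorem. Your direct verification is more elementary and self-contained, and it actually proves discreteness (via the bounded-distance comparison of $\tau(K_2)$ with $\tau(F_p)$), which the paper passes over.

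The weak spot is the step you flagged yourself. Saying $G^+(Q)=A$ ``by definition'' is circular: $s_1,s_2$ are only defined once $Q$ is known to be regular or chiral. Your Step~4 computations are enough to close it, as follows.
\begin{itemize}
\item The edges at $u$ are the $\sigma_2^j(K_1)$, and the faces at $K_1$ are $K_2$ and $\sigma_0(K_2)$. Hence every flag is $A$-equivalent to $\Phi=\{K_0,K_1,K_2\}$ or to $\Phi^2=\{K_0,K_1,\sigma_0(K_2)\}$.
\item Using $\sigma_0=\sigma_2^{-1}\sigma_1^{-1}$ and $\sigma_1^{-1}(K_1)=\sigma_2(K_1)$, you get $\Phi^0=\sigma_0(\Phi^2)$ and $\Phi^1=\sigma_2(\Phi^2)$.
\item $\Phi$ and $\Phi^2$ lie in different $A$-orbits. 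An element carrying one to the other lies in $A_{K_0}\cap A_{K_1}=\langle\sigma_2\rangle\cap\langle\sigma_0\rangle=\{1\}$, while $\sigma_0(K_2)\neq K_2$.
\end{itemize}
So $A$ has exactly two flag orbits (not ``one or two''), with adjacent flags in distinct orbits. Moreover $\sigma_1(\Phi)=(\Phi^0)^1$ and $\sigma_2(\Phi)=(\Phi^1)^2$, so $\sigma_1,\sigma_2$ really are the distinguished generators.

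You also use $\langle\sigma_2\rangle\cap\langle\sigma_0\rangle=\{1\}$ (for distinctness of the $q$ edges at $u$) and $\sigma_0\notin\langle\sigma_1\rangle$ without proof. Both follow from $\langle\sigma_1\rangle\cap\langle\sigma_2\rangle=\{1\}$ because $\sigma_0$ is an involution. For example, $\sigma_0\in\langle\sigma_1\rangle$ would force $\sigma_2=\sigma_1^{-1}\sigma_0\in\langle\sigma_1\rangle$. There is one small slip: $K_1$ lies in $\sigma_0(K_2)=\sigma_2^{-1}(K_2)$, not in $\sigma_2(K_2)$.
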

\smallskip

\begin{proof}
Since the vertex-stabilizers for $P$ in $G(P)$ are trivial, Theorem~\ref{existsigmas}(d) shows that $A=G(P)$. 

We first look at $Q$ combinatorially and show that $Q$ is an abstract polyhedron. The generators $\sigma_1,\sigma_2$ of $A$ by definition satisfy the relations $$\sigma_1^p = \sigma_2^q = (\sigma_1\sigma_2)^2 = 1,$$ 
and since $F_p$ and $F_q$ only intersect at $v$, it follows that $A$ has the intersection property,
$$\langle \sigma_1 \rangle \cap \langle \sigma_2 \rangle 
=\langle \sigma_1 \rangle \cap \langle \sigma_1\sigma_2 \rangle 
=\langle \sigma_2 \rangle \cap \langle \sigma_1\sigma_2 \rangle = \{1\}.$$ 
Then, by \cite{Chiral abstract polytopes}, we can generate from $A$ a directly regular or chiral abstract polyhedron $\mathcal{Q}$ whose vertices, edges, and faces  are the left cosets in $A$ of the subgroups $A^0:=\langle \sigma_2 \rangle$, $A^1:=\langle \sigma_1\sigma_2 \rangle$, and $A^2:=\langle \sigma_1 \rangle$, respectively, with incidence given by non-empty intersection of cosets (that is, $\varphi A^i \leq \tau A^j$ in $\mathcal{Q}$ if and only if $i\leq j$ and $\varphi A^i \cap \tau A^j \neq \emptyset$). Further, $A$ is a subgroup of the automorphism group of $\mathcal{Q}$ and acts on $\mathcal{Q}$ with two flag-orbits such that adjacent flags are in distinct orbits. \smallskip

Next we establish that the mapping given by 
$$\begin{array}{rlcl}
f: & \mathcal{Q} &\mapsto &Q \\
   & \varphi A^i &\mapsto &\varphi(K_i)\;\; (i=0,1,2;\,\varphi\in A)\\
\end{array}$$
is a polytope isomorphism between $\mathcal{Q}$ and the face set of $Q$. First note that our assumptions on the stabilizers $A_{K_i}$ allow us to prove that $f$ is a bijection. In fact, since $A_{K_i}=A^i$ for each~$i$, we have $\varphi(K_i)=\psi(K_i)$ if and only if $\varphi A_{K_i}=\psi A_{K_i}$; that is, if and only if $\varphi A^i =\psi A^i$. Thus $f$ is well-defined and injective. Clearly, $f$ is also surjective, so $f$ is bijective. Note that the group $A$ acts on both $\mathcal{Q}$ and $Q$ in an incidence preserving manner. It remains to show that both mappings $f$ and $f^{-1}$ preserve incidence. The following argument shows that $f$ itself preserves incidence. If $\varphi A^i \leq \tau A^j$ in $\mathcal{Q}$, then $i\leq j$ and $\varphi A^i \cap \tau A^j \neq \emptyset$, so there exists $\alpha \in A^i$ and $\beta \in A^j$ such that $\varphi \alpha = \tau \beta$; then 
$$f(\varphi A^i) = \varphi (K_i) = \varphi \alpha (K_i) = \tau \beta (K_i) \leq \tau \beta (K_j) = \tau (K_j) \leq f(\tau A^j),$$ 
as required. For the proof that $f^{-1}$ also preserves incidence we only need to consider faces $\varphi(K_i)$ of $Q$ with $\varphi(K_i)\leq K_j$ where $(i,j)=(0,1),(0,2),(1,2)$. The vertices of $K_1$ are $(\sigma_1\sigma_2)^{k}(K_0)$ for $k=0,1$, and the vertices and edges of $K_2$ are given by $\sigma_1^l(K_0)$ and $\sigma_1^l(K_1)$ for $l\geq 0$, respectively. Now bear in mind our assumptions on the stabilizers of the base faces $K_{l}$ of $Q$, and recall that $A_{K_l}=A^l$ for $l=0,1,2$. If $(i,j)=(0,1)$, it then follows that  
$\varphi\in (\sigma_1\sigma_2)^{k}A^{0}$ for some $k=0,1$ and thus
$$\varphi A^0\cap A^1 = (\sigma_1\sigma_2)^{k}\langle\sigma_2\rangle
\cap\langle\sigma_{1}\sigma_2\rangle\neq\emptyset.$$ If $(i,j)=(0,2)$, then 
$\varphi\in \sigma_1^{l}A^{0}$ for some $l\geq 0$ and thus 
$\varphi A^{0}\cap A^2 
= \sigma_1^{l}\langle\sigma_2\rangle \cap \langle\sigma_1\rangle\neq\emptyset$. Similarly, if $(i,j)=(1,2)$, then 
$\varphi A^{1}\cap A^2 
= \sigma_1^{l}\langle\sigma_1\sigma_2\rangle \cap \langle\sigma_1\rangle\neq\emptyset$. It follows that $\varphi A^i \leq A^j$ in $\mathcal{Q}$ in each case. This proves that $f^{-1}$ is also incidence preserving. \smallskip

Thus $Q$ is also an abstract polytope, isomorphic to $\mathcal{Q}$ under $f$. In particular, $Q$ is a geometric polytope (a faithful realization of $\mathcal{Q}$ in $\mathbb{E}^3$ in the sense of \cite{41}) and $A$ is a subgroup of the symmetry group of $Q$ acting on $Q$ with two flag-orbits such that adjacent flags are in distinct orbits. The generators $\sigma_1,\sigma_2$ are the distinguished generators of $A$ corresponding to this action, and $A=G^+(Q)$. Further, by our assumptions on $P$, the stabilizer of $v$ in $A$ is trivial and thus $v$ satisfies the IPC for $G^+(Q)$.
Thus $v$ can serve as the initial vertex for a snub ${S}_Q(v)$ of $Q$. 
\smallskip

Finally, we claim that $P={S}_Q(v)$. In fact, using the notation of Section~\ref{snubcon} with $s_1=\sigma_1$, $s_2=\sigma_2$, and $s_0=\sigma_1\sigma_2$, the faces of ${S}_Q(v)$ can be seen to match up with the faces of $P$. In fact, from the initial vertex $v$ of the snub ${S}_Q(v)$ the base edges of ${S}_Q(v)$ are obtained as $\{v,s_i(v)\}$ for $i=0,1,2$ and thus are edges of $P$ emanating from vertex $v$ of $P$. Similarly, the base faces of ${S}_Q(v)$ are faces of $P$ containing the vertex $v$. The rest is accomplished via the action of $A$. This completes the proof.
\end{proof}
\smallskip

\begin{remark}
    Let $P$ be a uniform polyhedron of snub type $p.3.3.q.3$ (with $q<\infty$). If the $p$-gonal faces and $q$-gonal faces are congruent, the vertex stabilizers in $G(P)$ are non-trivial, and $A$ is an index $2$ subgroup of $G(P)$ (and thus the vertex stabilizers in $A$ are trivial), then the statement of Theorem \ref{thm: completeness condition} also holds and the proof carries over. We do not know of an example of a uniform polyhedron of snub type satisfying the conditions that the $p$-gonal faces are congruent to the $q$-gonal faces, the vertex stabilizers in $G(P)$ are non-trivial, and $A=G(P)$. 
\end{remark}

\begin{remark}
\label{stab-u}
Suppose again that $u\,(=K_0)$ is the center of the face $F_q$ at vertex $v$ of the uniform polyhedron $P$. The assumption of Theorem~\ref{thm: completeness condition} on $A_u\,(=A_{K_0})$ is rather weak,  and we know of no example where it is not satisfied. Clearly, if $\varphi\in A_u$, then $\varphi(F_q)$ is also a $q$-gonal face of $P$ centered at $u$ and may or may not coincide with $F_q$. Geometrically, the condition $A_{u} = \langle \sigma_2 \rangle$ simply means that $F_q$ is the only $q$-gonal face of $P$ centered at~$u$. In fact, if the condition on $A_u$ does not hold, then there is more than one $q$-gon centered at~$u$ and the $q$-gons of $P$ centered at $u$ overlap in a certain sense. In a geometric figure, this geometric property would be easy to spot. Note that the polyhedron $P$ being of snub type prevents $F_q$ from having a dihedral stabilizer in $G(P)$. 
\end{remark}
\smallskip

\section{Generators and fundamental regions for $G^+(P)$}
\label{secgplus}

The remaining three sections focus on the snubs of the finite regular polyhedra in $\mathbb{E}^3$. The snubs of the remaining regular polyhedra will be described in detail in~\cite{Sk2}.

For the construction of snubs from regular polyhedra $P$, we require explicit formulas for the generators $s_1,s_2$ (and $s_0=s_1s_2$) of the combinatorial rotation subgroup $G^+(P)$. We present the information in Tables~\ref{gplusfinite1}, \ref{gplusfinite2} in the form

\begin{center}
\begin{tabular}{ |c|c|c|c| } 
 \hline
 $P$ & $s_1$ & $s_2$ & $s_0$ \\
 \hline
\end{tabular}
\end{center}
\noindent
Our tables list only one polyhedron of a dual pair, as duals have the same symmetry groups.

\begin{table}
\begin{center}
\begin{tabular}{ |c|c|c|c| } 
 \hline
 \rule{0pt}{2.25ex} $P$ & $s_1$ & $s_2$ & $s_0$  \\ [0.25ex]
 \hline
 \rule{0pt}{2.25ex} $\{3,3\}$ & $(\frac{\beta}{2}+\frac{z}{\sqrt{2}},\frac{\beta}{2}-\frac{z}{\sqrt{2}},\frac{\alpha}{\sqrt{2}})$ & $(\frac{\alpha}{2}-\frac{z}{\sqrt{2}},-\frac{\alpha}{2}-\frac{z}{\sqrt{2}},-\frac{\beta}{\sqrt{2}})$ & $(y,x,-z)$  \\ [0.25ex]
 \hline
 \rule{0pt}{2.25ex} $\{4,3\}_3$ & $(-y,x,-z)$ & $(\frac{\alpha}{2}-\frac{z}{\sqrt{2}},-\frac{\alpha}{2}-\frac{z}{\sqrt{2}},-\frac{\beta}{\sqrt{2}})$ & $(\frac{\alpha}{2}+\frac{z}{\sqrt{2}},\frac{\alpha}{2}-\frac{z}{\sqrt{2}},\frac{\beta}{\sqrt{2}})$  \\ [0.25ex]
 \hline
 \rule{0pt}{2.25ex} $\{4,3\}$ & $(y,-x,z)$ & $(y,-z,-x)$ & $(-z,-y,-x)$  \\ [0.25ex]
 \hline
 \rule{0pt}{2.25ex} $\{6,3\}_4$ & $(-z,-x,-y)$ & $(y,-z,-x)$ & $(x,-y,z)$  \\ [0.25ex]
 \hline
 \rule{0pt}{2.25ex} $\{6,4\}_3$ & $(-z,-x,-y)$ & $(-y,x,z)$ & $(-z,y,-x)$  \\ [0.25ex]
 \hline
\end{tabular}
\end{center}
\caption{Generators $s_1,s_2, s_0$ of $G^+(P)$ for the non-icosahedral finite regular polyhedra $P$, with $\alpha:=x+y$, $\beta:=x-y$.}
\label{gplusfinite1}
\end{table}

\begin{table}
\begin{center}
\begin{tabular}{ |c|c|c| } 
 \hline
 \rule{0pt}{2.25ex} $P$ & $s_1$ & $s_0$  \\ [0.25ex]
 \rule{0pt}{2.25ex}  & $s_2$ &   \\ [0.25ex]
 \hline
 \rule{0pt}{2.25ex} $\{3,5\}$ & $(\alpha x +\frac{y}{2} +\beta z,\frac{x}{2}-\beta y-\alpha z, \alpha y - \beta x-\frac{z}{2})$ & $(x,-y,-z)$  \\ [0.25ex]
 \rule{0pt}{2.25ex}  & $(\alpha x-\frac{y}{2}+\beta z,\frac{x}{2}+\beta y-\alpha z,\beta x+\alpha y + \frac{z}{2})$ &   \\ [0.25ex]
 \hline
 \rule{0pt}{2.25ex} $\{10,5\}_3$ & $(\alpha x+\frac{y}{2}+\beta z,\beta y-\frac{x}{2}+\alpha z,\alpha y-\beta x -\frac{z}{2})$ & $(x,y,-z)$  \\ [0.25ex]
 \rule{0pt}{2.25ex}  & $(\alpha x-\frac{y}{2}+\beta z,\frac{x}{2}+\beta y-\alpha z, \beta x +\alpha y+\frac{z}{2})$ &   \\ [0.25ex]
 \hline
 \rule{0pt}{2.25ex} $\{10,3\}_5$ & $(\alpha x+\frac{y}{2}+\beta z,\beta y -\frac{x}{2}+\alpha z, \alpha y-\beta x -\frac{z}{2})$ & $(x,-y,z)$  \\ [0.25ex]
 \rule{0pt}{2.25ex}  & $(\alpha x+\frac{y}{2}-\beta z,\frac{x}{2}-\beta y+\alpha z,\beta x-\alpha y -\frac{z}{2})$ &   \\ [0.25ex]
 \hline
 \rule{0pt}{2.25ex} $\{5,\frac{5}{2}\}$ & $(\frac{x}{2} + \beta y + \alpha z, \beta x + \alpha y - \frac{z}{2}, \frac{y}{2} - \alpha x + \beta z)$ & $(x,-y,-z)$  \\ [0.25ex]
 \rule{0pt}{2.25ex}  & $(\frac{x}{2} - \beta y + \alpha z, \beta x - \alpha y - \frac{z}{2}, \alpha x + \frac{y}{2} - \beta z)$ &   \\ [0.25ex]
 \hline
 \rule{0pt}{2.25ex} $\{6,\frac{5}{2}\}$ & $(\frac{x}{2} + \beta y + \alpha z, \frac{z}{2} - \alpha y - \beta x, \frac{y}{2} - \alpha x + \beta z)$ & $(x,y,-z)$  \\ [0.25ex]
 \rule{0pt}{2.25ex}  & $(\frac{x}{2} - \beta y + \alpha z, \beta x - \alpha y - \frac{z}{2}, \alpha x + \frac{y}{2} - \beta z)$ &   \\ [0.25ex]
 \hline
 \rule{0pt}{2.25ex} $\{6,5\}$ & $(\frac{x}{2} + \beta y + \alpha z, \frac{z}{2} - \alpha y - \beta x, \frac{y}{2} - \alpha x + \beta z)$ & $(x,-y,z)$  \\ [0.25ex]
 \rule{0pt}{2.25ex}  & $(\frac{x}{2} + \beta y - \alpha z, \beta x + \alpha y + \frac{z}{2}, \alpha x - \frac{y}{2} + \beta z)$ &   \\ [0.25ex]
 \hline
 \rule{0pt}{2.25ex} $\{3,\frac{5}{2}\}$ & $(\alpha y - \beta x + \frac{z}{2}, \beta z - \frac{y}{2} - \alpha x, \frac{x}{2} - \beta y + \alpha z)$ & $(x,-y,-z)$  \\ [0.25ex]
 \rule{0pt}{2.25ex}  & $(\alpha y - \beta x - \frac{z}{2}, \alpha x + \frac{y}{2} + \beta z, \frac{x}{2} - \beta y - \alpha z)$ &   \\ [0.25ex]
 \hline
 \rule{0pt}{2.25ex} $\{\frac{10}{3},\frac{5}{2}\}$ & $(\alpha y - \beta x + \frac{z}{2}, \beta z - \frac{y}{2} - \alpha x, \beta y - \frac{x}{2} - \alpha z)$ & $(x,-y,z)$  \\ [0.25ex]
 \rule{0pt}{2.25ex}  & $(\alpha y - \beta x - \frac{z}{2}, \alpha x + \frac{y}{2} + \beta z, \frac{x}{2} - \beta y - \alpha z)$ &   \\ [0.25ex]
 \hline
 \rule{0pt}{2.25ex} $\{\frac{10}{3},3\}$ & $(\alpha y - \beta x + \frac{z}{2}, \beta z - \frac{y}{2} - \alpha x, \beta y - \frac{x}{2} - \alpha z)$ & $(x,y,-z)$  \\ [0.25ex]
 \rule{0pt}{2.25ex}  & $(\frac{z}{2} - \alpha y - \beta x, \alpha x - \frac{y}{2} - \beta z, \frac{x}{2} + \beta y + \alpha z)$ &   \\ [0.25ex]
 \hline
\end{tabular}
\end{center}
\caption{Generators $s_1,s_2, s_0$ of $G^+(P)$ for the icosahedral finite regular polyhedra $P$, with $\alpha:= \frac{1+\sqrt{5}}{4}$, $\beta:= \frac{\sqrt{5}-1}{4}$.}
\label{gplusfinite2}
\end{table}

Fundamental regions for the groups $G^{+}(P)$ can be found by the method described at the end of Section \ref{regpo}:\ choose a point $w$ not fixed by any nontrivial element of $G^+(P)$, and then take as the fundamental region the interior of the Dirichlet-Voronoi region (cell) of $w$ in the Dirichlet-Voronoi tessellation of $\mathbb{E}^3$ determined by the point orbit of $w$ under $G^+(P)$. In Table~\ref{gplusfunregfinite}, we present the point $w$ used to generate the Dirichlet-Voronoi tessellation and thus the fundamental region $D$ for $G^+(P)$. The fundamental region $D$ itself is the interior of the convex cone with apex (0,0,0) spanned by the non-zero vectors in the set $V$ of Table~\ref{gplusfunregfinite}. 

\begin{table}
$$\begin{array}{cc}
{\begin{tabular}{|c||c|c|} 
 \hline
 \rule{0pt}{2.5ex} $P$ & $w$ & $V$ \\ [0.5ex] 
 \hline\hline
 \rule{0pt}{2.5ex} $\{3,3\}$ & $(\frac{4}{9},0,-\frac{2}{9\sqrt{2}})$ & $V_1$ \\ [0.5ex]
 \hline
 \rule{0pt}{2.5ex} $\{4,3\}_3$ & $(\frac{11}{24}, \frac{1}{8}, -\frac{\sqrt{2}}{12})$ & $V_2$ \\ [0.5ex]
 \hline
 \rule{0pt}{2.5ex} $\{4,3\}$ & $(\frac{1}{3},\frac{1}{3},-\frac{2}{3})$ & $V_3$ \\ [0.5ex]
 \hline
 \rule{0pt}{2.5ex} $\{6,3\}_4$ & $(\frac{1}{3},\frac{1}{3},-\frac{2}{3})$ & $V_3$ \\ [0.5ex]
 \hline
 \rule{0pt}{2.5ex} $\{6,4\}_3$ & $(\frac{1}{2},\frac{1}{4},-\frac{3}{4})$ & $V_4$ \\ [0.5ex]
 \hline
\rule{0pt}{2.5ex} $\{10,5\}_3$ & $(\frac{5+4\sqrt{5}}{12},\frac{1+\sqrt{5}}{24},\frac{1}{4})$ & $V_6$ \\ [0.5ex]
 \hline
 \rule{0pt}{2.5ex} $\{10,3\}_5$ & $(\frac{5+4\sqrt{5}}{12},\frac{1+\sqrt{5}}{24},\frac{1}{4})$ & $V_6$ \\ [0.5ex]
 \hline
 \rule{0pt}{2.5ex} $\{6,\frac{5}{2}\}$ & $(\frac{5+4\sqrt{5}}{12},\frac{1+\sqrt{5}}{24},\frac{1}{4})$ & $V_6$ \\ [0.5ex]
 \hline
 \rule{0pt}{2.5ex} $\{6,5\}$ & $(\frac{5+4\sqrt{5}}{12},\frac{1+\sqrt{5}}{24},\frac{1}{4})$ & $V_6$ \\ [0.5ex]
 \hline
 \rule{0pt}{2.5ex} $\{\frac{10}{3},3\}$ & $(\frac{5+4\sqrt{5}}{12},\frac{1+\sqrt{5}}{24},\frac{1}{4})$ & $V_6$ \\ [0.5ex]
 \hline
 \rule{0pt}{2.5ex} $\{\frac{10}{3},\frac{5}{2}\}$ & $(\frac{5+4\sqrt{5}}{12},\frac{1+\sqrt{5}}{24},\frac{1}{4})$ & $V_6$ \\ [0.5ex]
 \hline
 \rule{0pt}{2.5ex} $\{3,5\}$ & $(\frac{7+5\sqrt{5}}{18},\frac{1+\sqrt{5}}{18},\frac{1}{3})$ & $V_5$ \\ [0.5ex]
 \hline
 \rule{0pt}{2.5ex} $\{5,\frac{5}{2}\}$ & $(\frac{7+5\sqrt{5}}{18},\frac{1+\sqrt{5}}{18},\frac{1}{3})$ & $V_5$ \\ [0.5ex]
 \hline
 \rule{0pt}{2.5ex} $\{3,\frac{5}{2}\}$ & $(\frac{7+5\sqrt{5}}{18},\frac{1+\sqrt{5}}{18},\frac{1}{3})$ & $V_5$ \\ [0.5ex]
 \hline
\end{tabular}} &
{\begin{tabular}{|c||l|}
 \hline
 \rule{0pt}{2.5ex} $V_1$ & $(1,0,-\frac{1}{\sqrt{2}}),(\frac{1}{2},\frac{1}{2},0),(\frac{1}{2},-\frac{1}{2},0),(\frac{1}{3},0,\frac{1}{3\sqrt{2}})$ \\ [0.5ex]
 \hline
 \rule{0pt}{2.5ex} $V_2$ & $(1,0,-\frac{1}{\sqrt{2}}),(\frac{1}{2},\frac{1}{2},0),(\frac{1}{3},0,\frac{1}{3\sqrt{2}})$ \\ [0.5ex]
 \hline
 \rule{0pt}{2.5ex} $V_3$ & $(0,0,-1),(1,0,-1),(1,1,-1),(0,1,-1)$ \\ [0.5ex]
 \hline
 \rule{0pt}{2.5ex} $V_4$ & $(1,1,-1), (1,0,-1), (0,0,-1)$ \\ [0.5ex]
 \hline
 \rule{0pt}{2.5ex} $V_5$ & $(\frac{1+\sqrt{5}}{2},0,1),(\frac{1+\sqrt{5}}{2},0,0),(\frac{2+\sqrt{5}}{3},\frac{1+\sqrt{5}}{6},0)$,\\[.5ex] 
 &$(\frac{3+\sqrt{5}}{4},\frac{1+\sqrt{5}}{4},\frac{1}{2})$ \\ [0.5ex]
 \hline
 \rule{0pt}{2.5ex} $V_6$ & $(\frac{1+\sqrt{5}}{2},0,1), (\frac{1+\sqrt{5}}{2},0,0), (\frac{2+\sqrt{5}}{3},\frac{1+\sqrt{5}}{6},0)$ \\ [0.5ex]
 \hline
\end{tabular}}
\end{array}$$
\caption{Fundamental regions for $G^{+}(P)$ for the finite regular polyhedra $P$.}
\label{gplusfunregfinite}
\end{table}

\section{Snubs of the finite regular polyhedra} 
\label{finite uniform polyhedra}

In this section, as an illustration of our methods, we apply the snub construction to the eighteen finite regular polyhedra $P$,  with an initial vertex $v$ chosen either in the (open) fundamental region $D$ of $G^+(P)$ or as a point on the boundary of $D$ fixed by one of the generators $s_0$, $s_1$, or $s_2$. For a discussion of the snubs for the remaining regular polyhedra we refer to \cite{Sk2}. Our computations are based on the generators $s_1,s_2,s_0$ of $G^{+}(P)$ in Tables~\ref{gplusfinite1},~\ref{gplusfinite2} and the fundamental regions $D$ of $G^{+}(P)$ in Table~\ref{gplusfunregfinite}. Recall our notation ${S}_P^i(v)$ for a snub $S_P(v)$ derived from an initial point $v$ fixed by $s_i$ ($i=0,1,2$). 

If $P$ is either a Platonic solid or a Kepler-Poinsot polyhedron and $v$ satisfies the IPC of (3) for $G^+(P)$, then $S_P(v)$ corresponds to an already well-known snub. In particular, by choosing $v$ as an appropriate acceptable solution to the uniformity equations for $S_P(v)$ we obtain the following uniform polyhedra: a regular icosahedron or a regular great icosahedron (from the tetrahedron), a snub cube (from the cube and the octahedron), a snub dodecahedron (from the dodecahedron and the icosahedron), a snub dodecadodecahedron, or an inverted snub dodecadodecahedron  (from $\{\frac{5}{2},5\}$ and $\{5,\frac{5}{2}\}$), and a great snub icosidodecahedron, a great inverted snub icosidodecahedron, or a great retrosnub icosidodecahedron (from $\{3,\frac{5}{2}\}$ and $\{\frac{5}{2},3\}$). See \cite{11,Wiki} for the names of these polyhedra. Furthermore, the degenerate snubs $S_P^0$, $S_P^1$, and $S_P^2$ of $P$ are the medial of $P$, the dual of $P$, and $P$ itself, respectively. \smallskip

The remaining nine finite regular polyhedra are the Petrie-duals of the Platonic solids and the Petrie-duals of the Kepler-Poinsot polyhedra. Amongst these, the only polyhedron which is orientable is the Petrie-dual of the cube. Furthermore, the generator $s_0$ is a reflection for each of these nine polyhedra, meaning that their snubs cannot be made uniform by Lemma~\ref{lemma: uniformity conditions}. If $P$ is one of these nine polyhedra, then $S_P^1$ collapses to a point (the origin) and $S_P^2$ is similar to $P$ itself. The degenerate snub $S_P^0$ is of most interest, as here we obtain new classes of finite uniform polyhedra; we present these in Section~\ref{new finite uniform polyhedra}.

We now present the snubs (with $v\in D$) generated from these remaining regular polyhedra, beginning with the snub of $\{4,3\}_3$, the Petrie-dual of $\{3,3\}$. 
\bigskip

\vspace{0.5em}
\begin{centering}

\begin{tblr}{cells={valign=m,halign=c},row{1}={rowsep=4pt},row{2}={rowsep=4pt},row{3}={rowsep=4pt},row{4}={rowsep=4pt},hlines,vlines,column{1}={3.25cm}}
\SetCell[r=4]{} \includegraphics[height=2.8cm,valign=c]{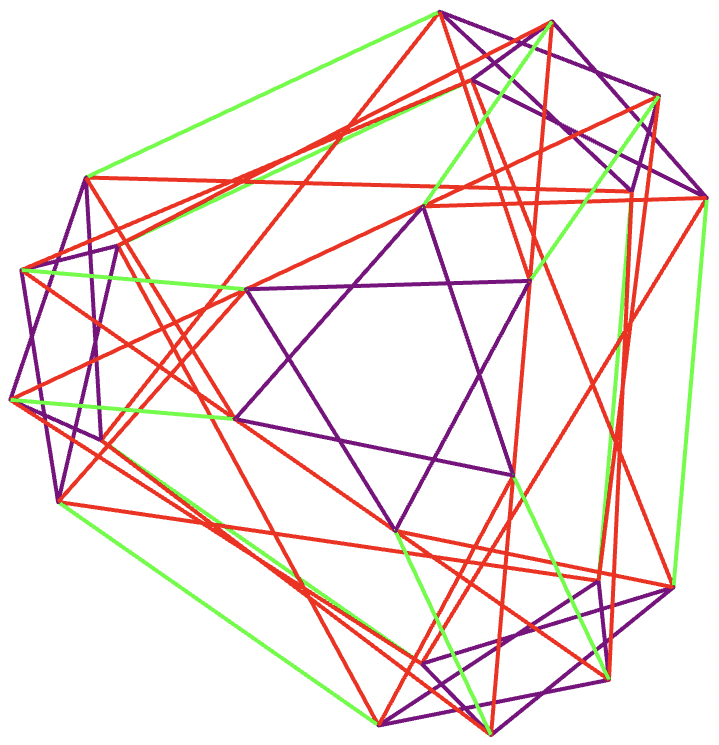}
    & $S_{\{4,3\}_3}(v)$ with $v\in D$ \\
    & $4_s.3.3.3.3$ or $4_c.3.3.3.3$ \\
    & $(f_0,f_1^0, f_1^1, f_1^2, f_2^0, f_2^1, f_2^2) = (24,12,24,24,24,6,8)$ \\
    & $\chi(S_P) = 2$ \\ 
\end{tblr}

\end{centering}
\vspace{0.5em}

The second row in our tables lists all achievable vertex symbols. Recall our convention to record a regular convex $p$-gon, a regular star $p$-gon (with density $d$), or a regular skew $p$-gon as $p_c$, $\frac{p}{d}$, or $p_s$, respectively. The third row in the tables presents the detailed $f$-vector of the snub, where $f_i^j$ is the number of $i$-faces of type $j$. The fourth row gives the Euler characteristic of the snub $S_P(v)$ (with $v \in D$). 

For $P=\{4,3\}_3$, since $s_1$ is a rotatory reflection, depending on the choice of $v$ (still in $D$) we can either obtain skew or convex faces of type 1. The hemi-cube $\{4,3\}_3$ is a non-orientable regular map on the projective plane whose two-fold cover is the cube $\{4,3\}$ on the sphere. Thus, by Theorem~\ref{thm: isomorphisms between wythoffians}, $S_{\{4,3\}_3}(v) \cong S_{\{4,3\}}(w)$ where $w$ lies in the fundamental region of $G^+(\{4,3\})$. By Remark~\ref{hemicubesnub}, $S_{\{4,3\}_3}(v)$ cannot be made uniform geometrically, but $S_{\{4,3\}}(w)$ can be made uniform via the realization as the standard snub cube.
\smallskip

We now present the remaining snubs in the same format as above. We often refer to the notation of~\cite{Conder} to name the underlying surface map.
\bigskip

\vspace{0.5em}
\begin{centering}

\begin{tblr}{cells={valign=m,halign=c},row{1}={rowsep=4pt},row{2}={rowsep=4pt},row{3}={rowsep=4pt},row{4}={rowsep=4pt},hlines,vlines,column{1}={3.25cm}}
\SetCell[r=4]{} \includegraphics[height=2.8cm,valign=c]{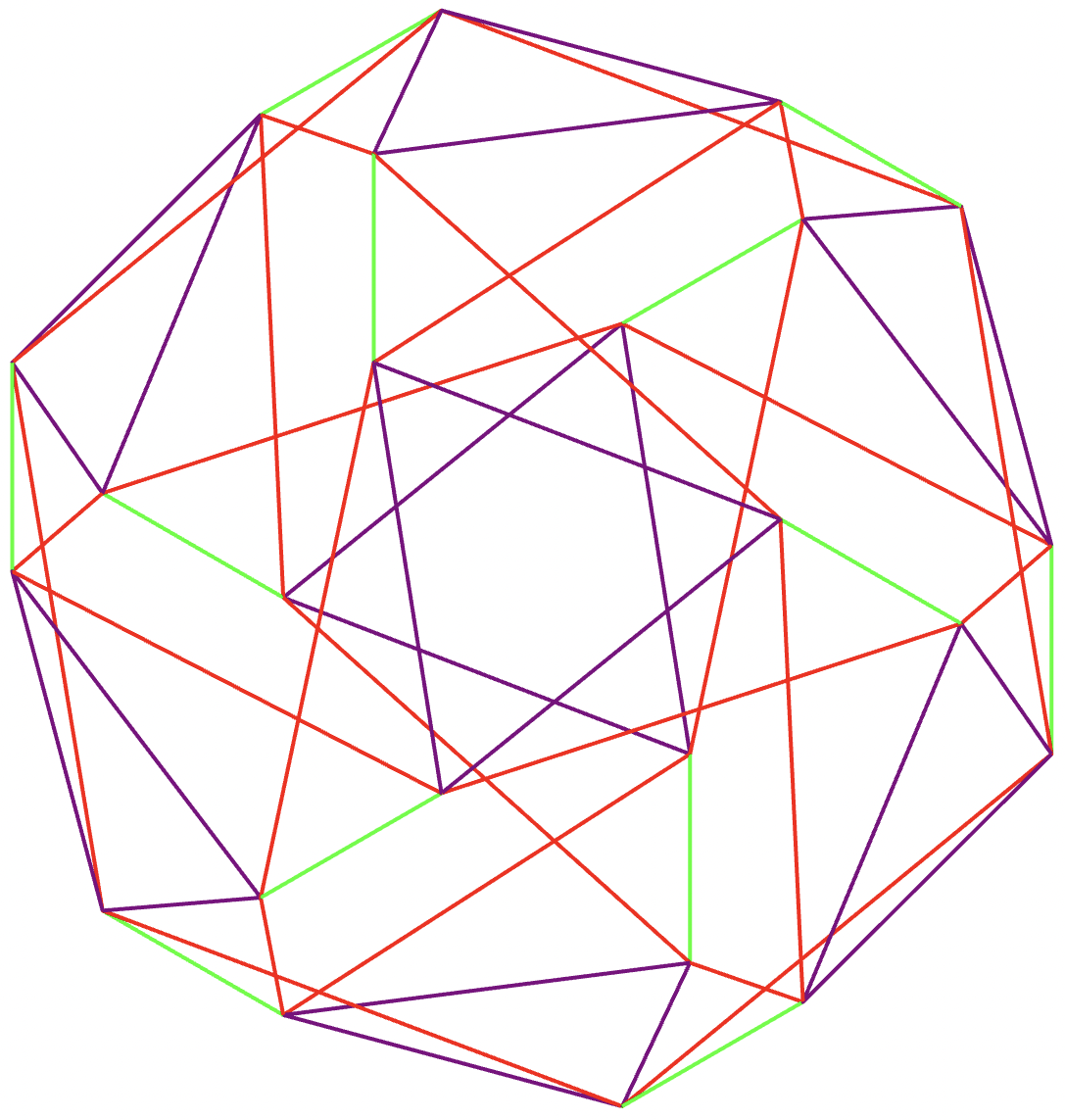}
    & $S_{\{6,3\}_4}(v)$ with $v\in D$ \\
    & $6_s.3.3.3.3$ or $6_c.3.3.3.3$ \\
    & $(f_0,f_1^0, f_1^1, f_1^2, f_2^0, f_2^1, f_2^2) = (24,12,24,24,24,4,8)$ \\
    & $\chi(S_P) = 0$ \\ 
\end{tblr}

\end{centering}
\vspace{0.5em}

Since $\{6,3\}_4$ is orientable, $S_{\{6,3\}_4}(v)$ can be cellularly embedded on the same surface as $\{6,3\}_4$, namely the torus. As our later considerations will show, this is the only finite Petrie-dual of a classical regular polyhedron that does not have faces sharing centers in pairs.
\bigskip

\vspace{0.5em}
\begin{centering}

\begin{tblr}{cells={valign=m,halign=c},row{1}={rowsep=4pt},row{2}={rowsep=4pt},row{3}={rowsep=4pt},row{4}={rowsep=4pt},hlines,vlines,column{1}={3.25cm}}
\SetCell[r=4]{} \includegraphics[height=2.8cm,valign=c]{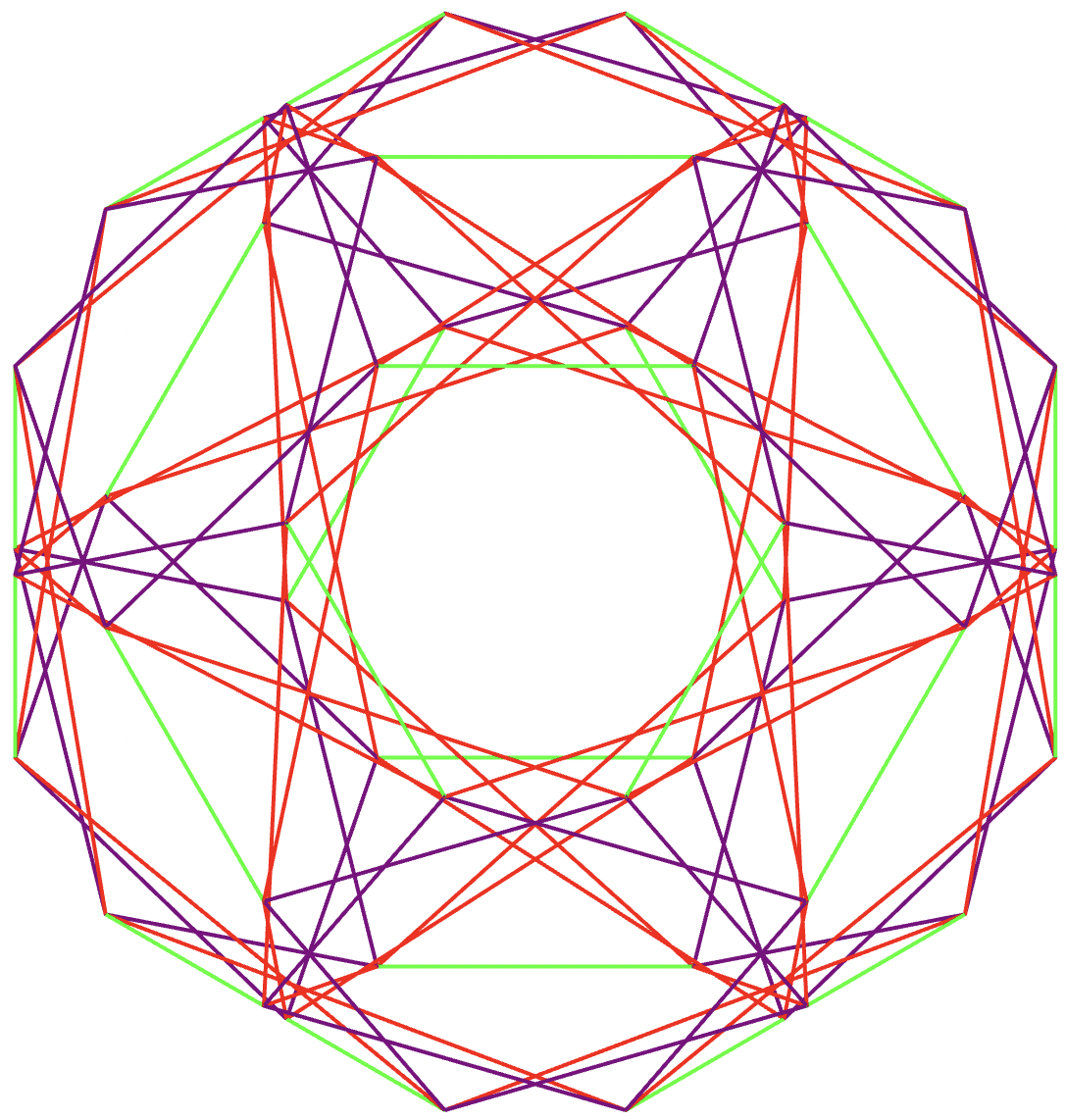}
    & $S_{\{6,4\}_3}(v)$ with $v\in D$ \\
    & $6_s.3.3.4_c.3$ or $6_c.3.3.4_c.3$ \\
    & $(f_0,f_1^0, f_1^1, f_1^2, f_2^0, f_2^1, f_2^2) = (48,24,48,48,48,8,12)$ \\
    & $\chi(S_P) = -4$ \\  
\end{tblr}

\end{centering}
\vspace{0.5em}

Note that $\{6,4\}_3$ is a regular map on a non-orientable surface of genus~4 whose two-fold cover is a regular map $M$ of type $\{6,4\}$ on an orientable surface of genus~3 with Petrie polygons of length 6, denoted R3.4' in \cite{Conder}. Thus, by the proof of Theorem~\ref{doublecover}, $S_{\{6,4\}_3}(v) \cong S_M(w)$ where $w$ lies $\Delta$. Recall that $\Delta$ is the interior of the base triangle of the barycentric subdivision $B(M)$ of the map $M$ (see Section \ref{section: topology}).
\bigskip

\vspace{0.5em}
\begin{centering}

\begin{tblr}{cells={valign=m,halign=c},row{1}={rowsep=4pt},row{2}={rowsep=4pt},row{3}={rowsep=4pt},row{4}={rowsep=4pt},hlines,vlines,column{1}={3.25cm}}
\SetCell[r=4]{} \includegraphics[height=2.8cm,valign=c]{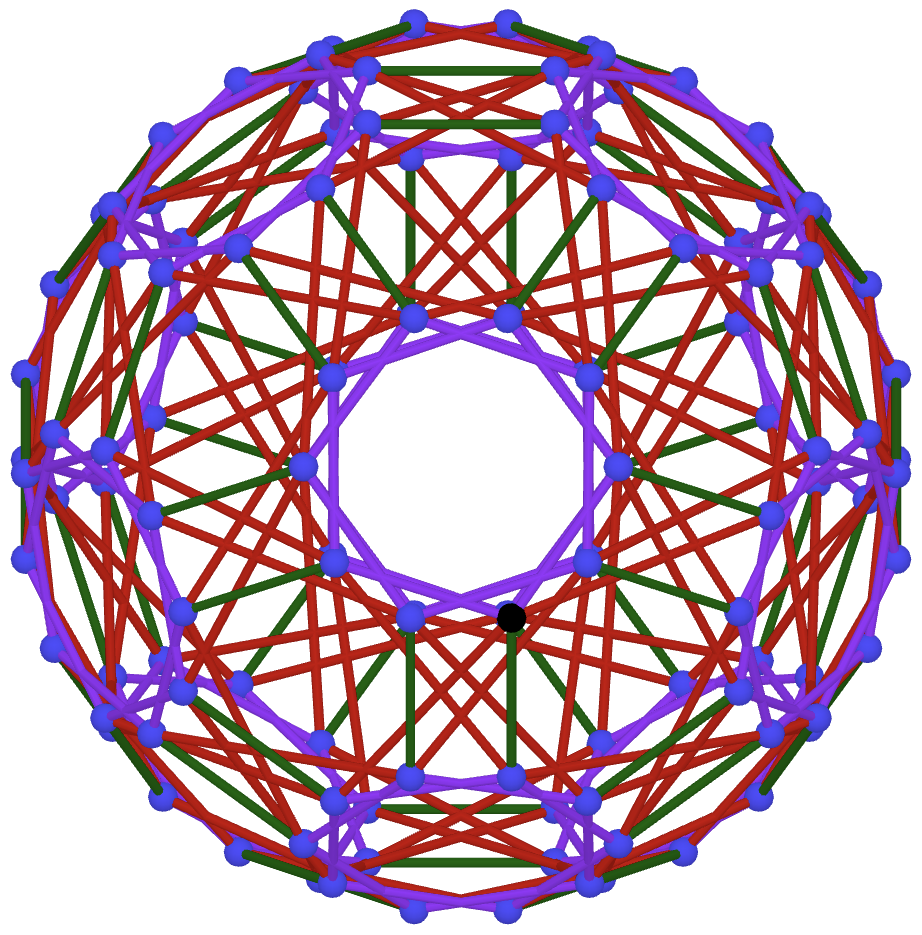}
    & $S_{\{10,5\}_3}(v)$ with $v\in D$ \\
    & $10_s.3.3.5_c.3$ or $10_c.3.3.5_c.3$ \\
    & $(f_0,f_1^0, f_1^1, f_1^2, f_2^0, f_2^1, f_2^2) = (120,60,120,120,120,12,24)$ \\
    & $\chi(S_P) = -24$ \\  
\end{tblr}
    
\end{centering}
\vspace{0.5em}

Note that $\{10,5\}_3$ (N14.3') is a regular map on a non-orientable surface of genus 14 whose two-fold cover is a regular map $M$ of type $\{10,5\}$ on an orientable surface of genus 13 with Petrie polygons of length 6 (R13.8'). Thus, $S_{\{10,5\}_3}(v) \cong S_{M}(w)$ where $w$ lies in $\Delta$.
\bigskip

\vspace{0.5em}
\begin{centering}

\begin{tblr}{cells={valign=m,halign=c},row{1}={rowsep=4pt},row{2}={rowsep=4pt},row{3}={rowsep=4pt},row{4}={rowsep=4pt},hlines,vlines,column{1}={3.25cm}}
\SetCell[r=4]{} \includegraphics[height=2.8cm,valign=c]{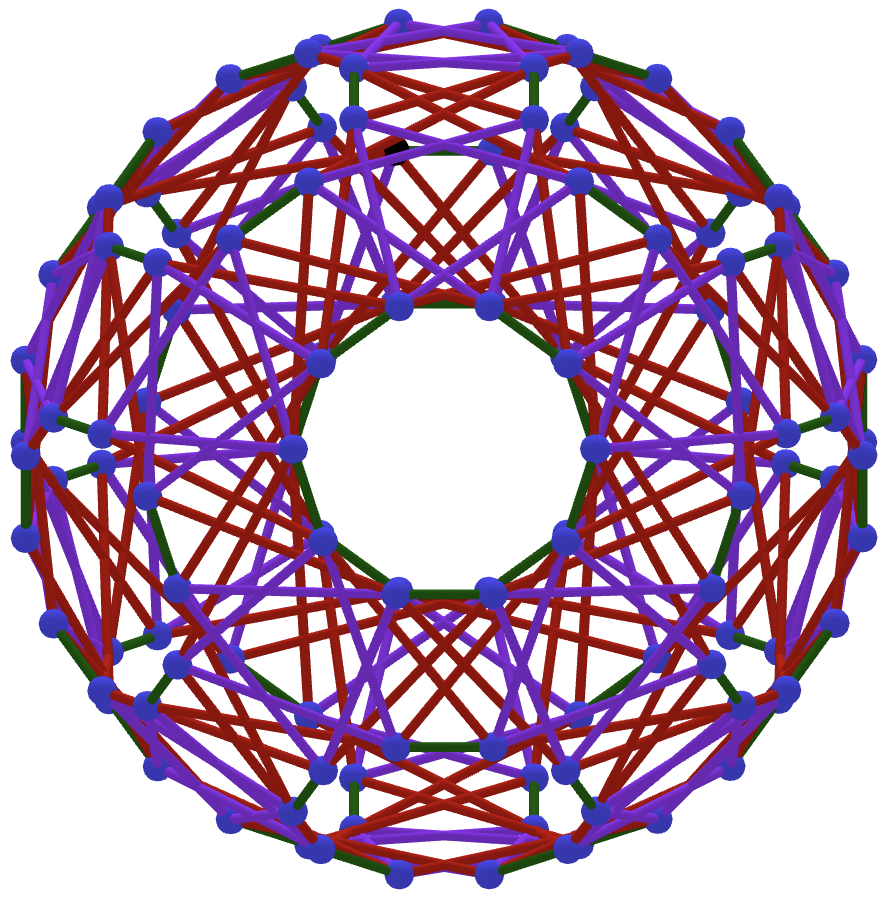}
    & $S_{\{10,3\}_5}(v)$ with $v\in D$ \\
    & $10_s.3.3.3.3$ or $10_c.3.3.3.3$ \\
    & $(f_0,f_1^0, f_1^1, f_1^2, f_2^0, f_2^1, f_2^2) = (120,60,120,120,120,12,40)$ \\
    & $\chi(S_P) = -8$ \\ 
\end{tblr}
    
\end{centering}
\vspace{0.5em}

Note that $\{10,3\}_5$ (N6.2') is a regular map on a non-orientable surface of genus 6 whose two-fold cover is a regular map $M$ of type $\{10,3\}$ on an orientable surface of genus 5 with Petrie polygons of length 10 (R5.2'). Thus, $S_{\{10,3\}_5}(v) \cong S_{M}(w)$ where $w$ lies in $\Delta$.
\bigskip

\vspace{0.5em}
\begin{centering}

\begin{tblr}{cells={valign=m,halign=c},row{1}={rowsep=4pt},row{2}={rowsep=4pt},row{3}={rowsep=4pt},row{4}={rowsep=4pt},hlines,vlines,column{1}={3.25cm}}
\SetCell[r=4]{} \includegraphics[height=2.8cm,valign=c]{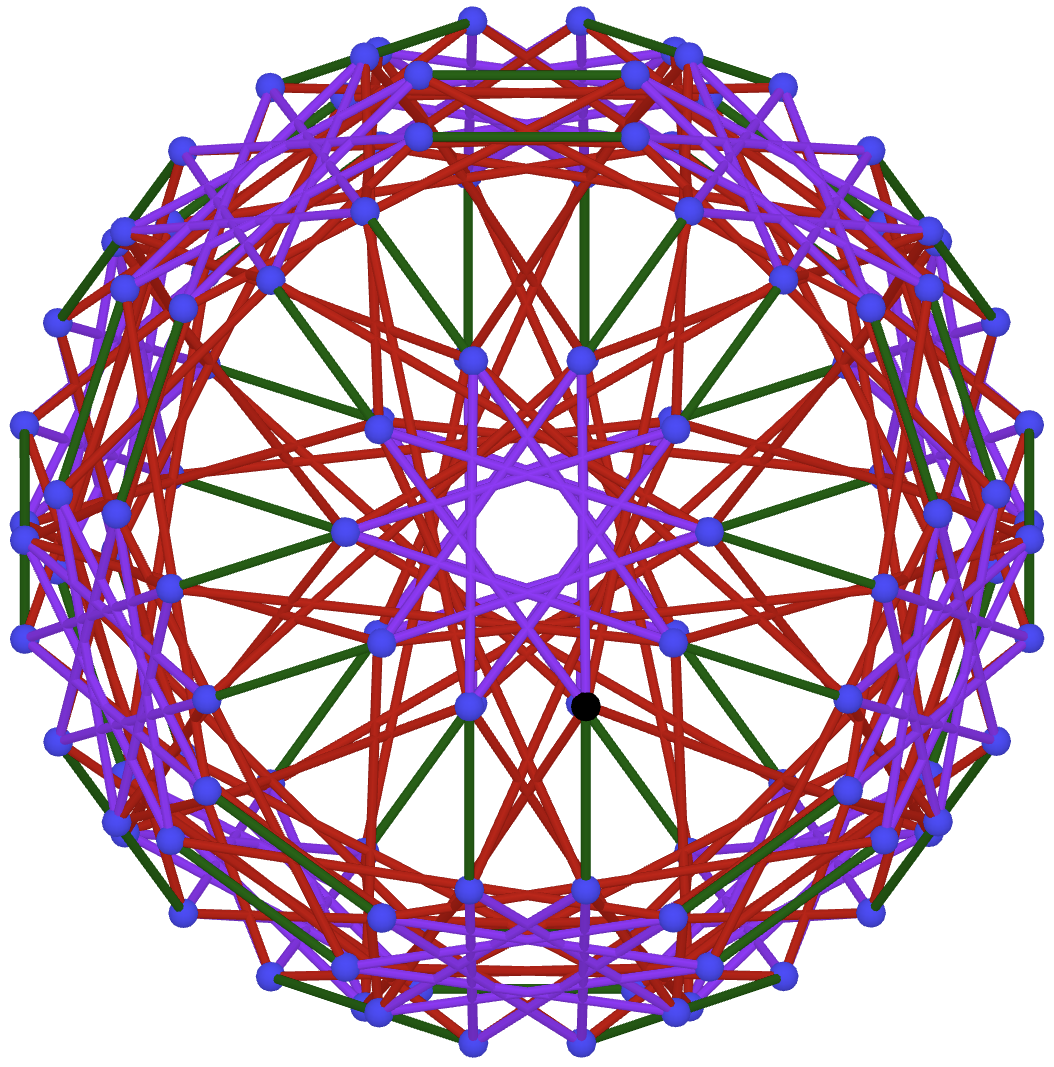}
    & $S_{\{6,\frac{5}{2}\}}(v)$ with $v\in D$ \\
    & $6_s.3.3.\frac{5}{2}.3$ or $6_c.3.3.\frac{5}{2}.3$ \\
    & $(f_0,f_1^0, f_1^1, f_1^2, f_2^0, f_2^1, f_2^2) = (120,60,120,120,120,20,24)$ \\
    & $\chi(S_P) = -16$ \\ 
\end{tblr}
    
\end{centering}
\vspace{0.5em}

Note that $\{6,\frac{5}{2}\}$ gives a regular map on a non-orientable surface of genus~10 (N10.5') whose two-fold cover is a regular map $M$ of type $\{6,5\}$ of genus 9 with Petrie polygons of length 10 (R9.15'). Thus, $S_{\{6,\frac{5}{2}\}}(v) \cong S_{M}(w)$ where $w$ lies in $\Delta$.
\bigskip

\vspace{0.5em}
\begin{centering}

\begin{tblr}{cells={valign=m,halign=c},row{1}={rowsep=4pt},row{2}={rowsep=4pt},row{3}={rowsep=4pt},row{4}={rowsep=4pt},hlines,vlines,column{1}={3.25cm}}
\SetCell[r=4]{} \includegraphics[height=2.8cm,valign=c]{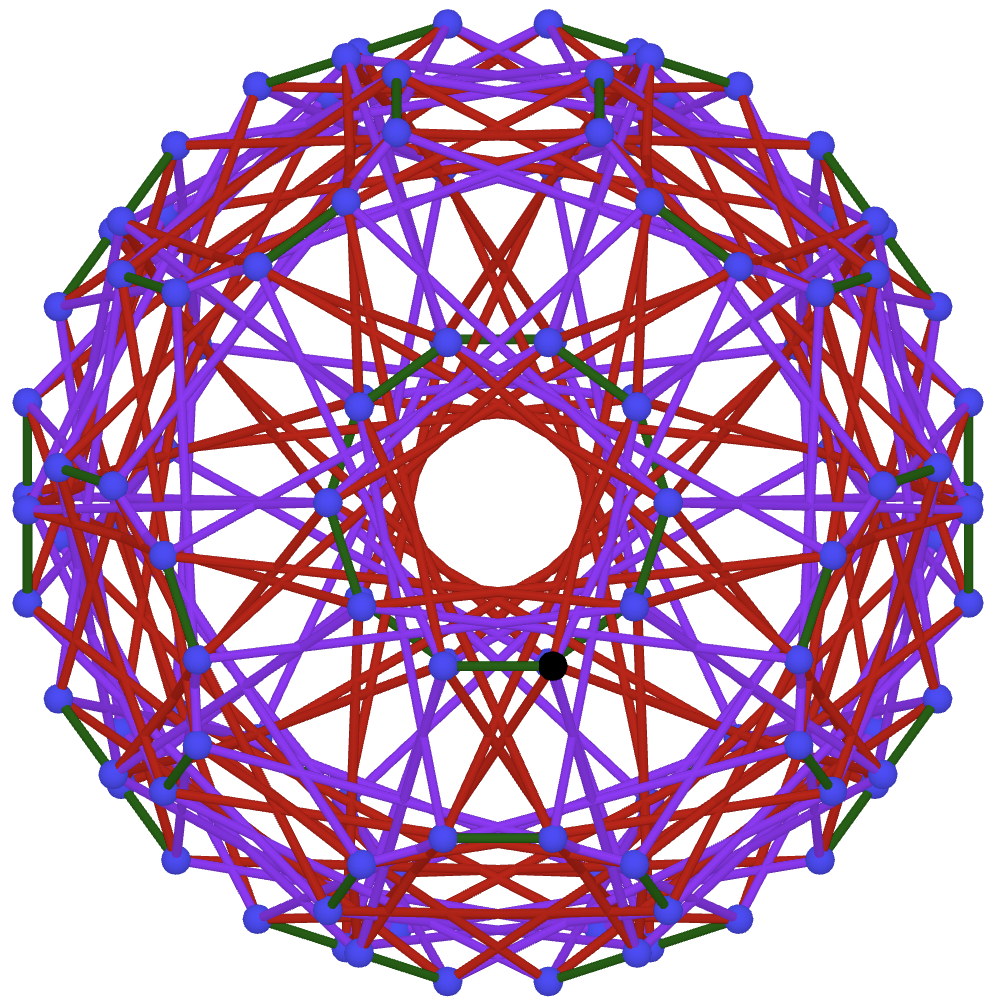}
    & $S_{\{6,5\}}(v)$ with $v\in D$ \\
    & $6_s.3.3.5_c.3$ or $6_c.3.3.5_c.3$ \\
    & $(f_0,f_1^0, f_1^1, f_1^2, f_2^0, f_2^1, f_2^2) = (120,60,120,120,120,20,24)$ \\
    & $\chi(S_P) = -16$ \\  
\end{tblr}
    
\end{centering}
\vspace{0.5em}

Combinatorially, the polyhedron $\{6,5\}$ is $\{6,5\}_{5,3}$ (N10.5'), a regular map on a non-orientable surface of genus 10 whose two-fold cover is a regular map $M$ of type $\{6,5\}$ on an orientable surface of genus 9 with Petrie polygons of length 10 (R9.15'). Again, $S_P(v) \cong S_{M}(w)$ where $w$ lies in $\Delta$. It is worth noting that $S_{\{6,5\}} \cong S_{\{6,\frac{5}{2}\}}$.
\bigskip

\vspace{0.5em}
\begin{centering}

\begin{tblr}{cells={valign=m,halign=c},row{1}={rowsep=4pt},row{2}={rowsep=4pt},row{3}={rowsep=4pt},row{4}={rowsep=4pt},hlines,vlines,column{1}={3.25cm}}
\SetCell[r=4]{} \includegraphics[height=2.8cm,valign=c]{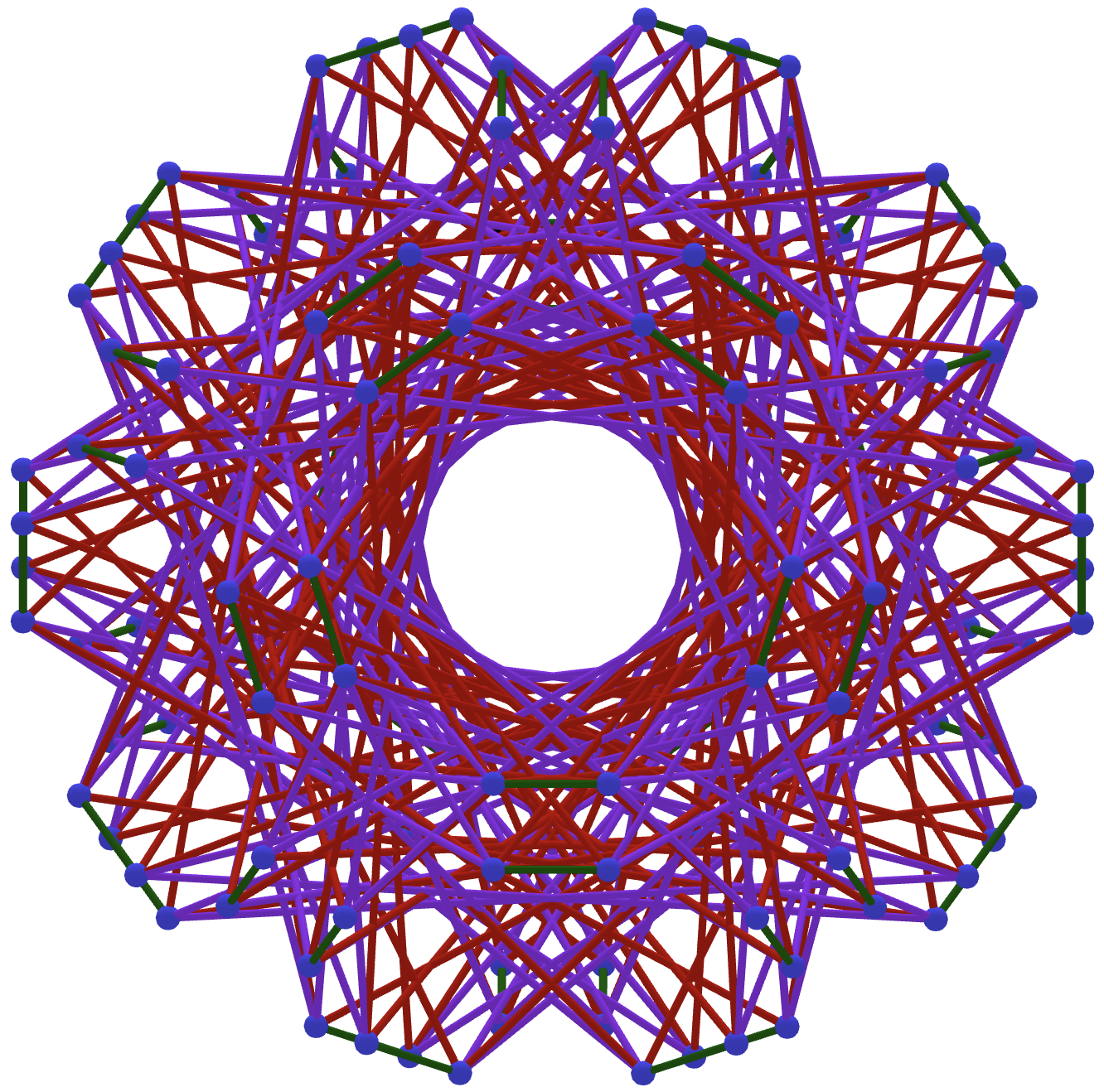}
    & $S_{\{\frac{10}{3},\frac{5}{2}\}}(v)$ with $v\in D$ \\
    & $(\frac{10}{3})_s.3.3.\frac{5}{2}.3$ or $\frac{10}{3}.3.3.\frac{5}{2}.3$ \\
    & $(f_0,f_1^0, f_1^1, f_1^2, f_2^0, f_2^1, f_2^2) = (120,60,120,120,120,12,24)$ \\
    & $\chi(S_P) = -24$ \\  
\end{tblr}
    
\end{centering}
\vspace{0.5em}

Combinatorially, $\{\frac{10}{3},\frac{5}{2}\}$ is $\{10,5\}_3$ (N14.3'), a non-orientable regular map of genus 14 whose two-fold cover is an orientable regular map $M$ of type $\{10,5\}$ of genus 13 with Petrie polygons of length 6 (R13.8'). We have $S_P(v) \cong S_{M}(w)$ where $w$ lies in $\Delta$.
\bigskip

\vspace{0.5em}
\begin{centering}

\begin{tblr}{cells={valign=m,halign=c},row{1}={rowsep=4pt},row{2}={rowsep=4pt},row{3}={rowsep=4pt},row{4}={rowsep=4pt},hlines,vlines,column{1}={3.25cm}}
\SetCell[r=4]{} \includegraphics[height=2.8cm,valign=c]{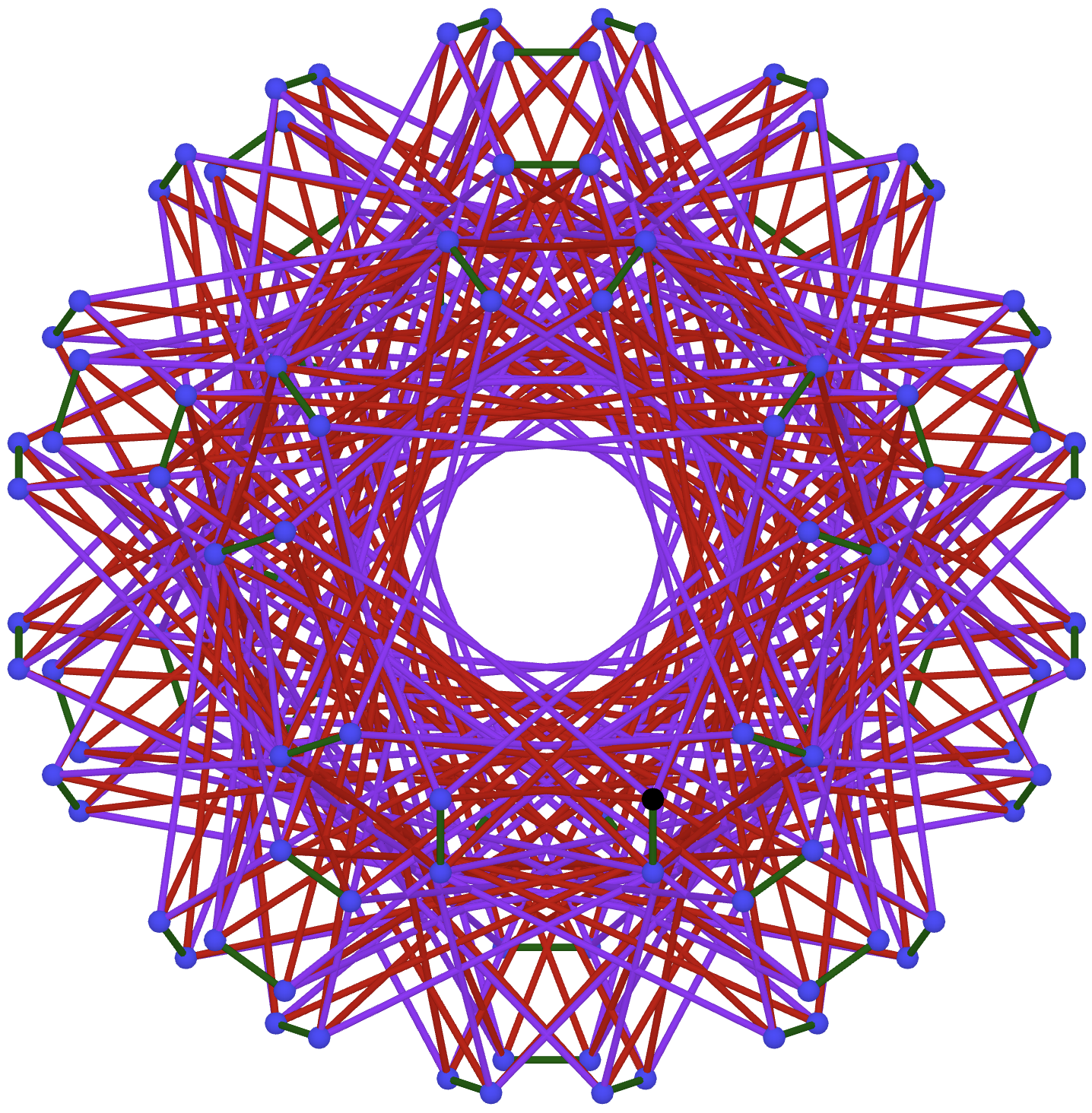}
    & $S_{\{\frac{10}{3},3\}}(v)$ with $v\in D$ \\
    & $(\frac{10}{3})_s.3.3.3.3$ or $\frac{10}{3}.3.3.3.3$ \\
    & $(f_0,f_1^0, f_1^1, f_1^2, f_2^0, f_2^1, f_2^2) = (120,60,120,120,120,12,40)$ \\
    & $\chi(S_P) = -8$ \\  
\end{tblr}
    
\end{centering}
\vspace{0.5em}

Finally, $\{\frac{10}{3},3\}$ is isomorphic to $\{10,3\}_5$ (N6.2'), a non-orientable regular map of genus 6 whose two-fold cover is an orientable regular map $M$ of type $\{10,3\}$ of genus 10 with Petrie polygons of length 5 (R5.2'). We have $S_P(v) \cong S_{M}(w)$ where $w$ lies in $\Delta$.

\section{New finite uniform polyhedra}
\label{new finite uniform polyhedra}

In this section we present new uniform polyhedra obtained by applying the snub construction to the regular finite polyhedra. 
As we saw in Section~\ref{finite uniform polyhedra}, there are no new uniform polyhedra arising as genuine snubs (that is with $v$ satisfying the IPC of (3)) from these regular polyhedra. However, there are many new uniform degenerate snubs of the form $S_P^0(v)$. For computations we again refer to Tables~\ref{gplusfinite1},~\ref{gplusfinite2},~\ref{gplusfunregfinite}.

We consider the classes of finite uniform polyhedra of the form $S_P^0(v)$ obtained when $v$ is chosen such that $s_0(v)=v$. Note that we say classes here because the fixed point set of each $s_0$ is a plane. Thus, by varying $v$ on this plane we can obtain infinitely many polyhedra which are combinatorially equivalent, but not similar (geometrically equivalent). In each of the tables below we have chosen a representative for each class of new uniform polyhedra; the vertex figures are crossed quadrilaterals, either bowties or butterflies as shown in Figures~\ref{fig:bow tie} and~\ref{fig:butterfly}. 

\begin{table}[H]
\centering

\begin{tblr}{cells={valign=m,halign=c},row{1}={rowsep=4pt},row{2}={rowsep=4pt},row{3}={rowsep=4pt},row{4}={rowsep=4pt},hlines,vlines,column{1}={3.25cm}}
\SetCell[r=4]{} \includegraphics[height=2.8cm,valign=c]{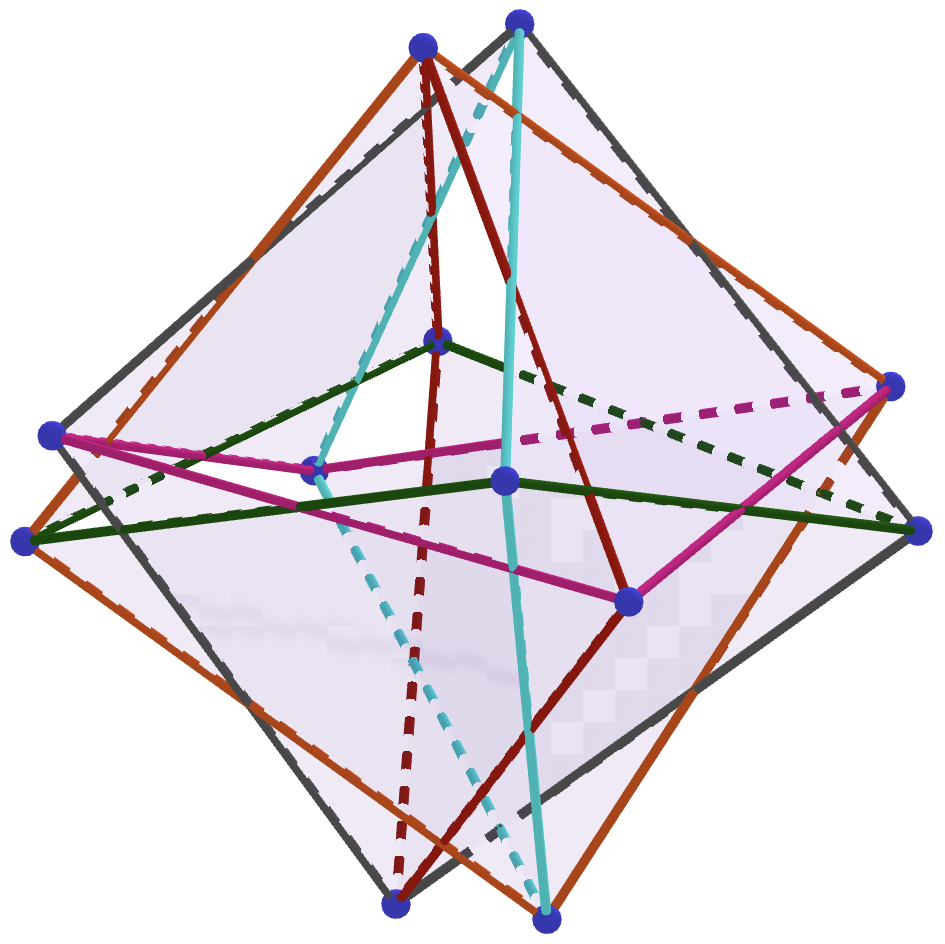}
    & $\mathcal{S}_{\{4,3\}_3}(\frac{1}{2},\frac{3}{10},\frac{\sqrt{2}}{10})$ \\
    & $4_s.3.4_s.3$ \\
    & Crossed quadrilateral (butterfly) \\
    & $(f_0,f_1, f_2^1, f_2^2) = (12,24,8,6)$ \\ 
\end{tblr}

\vspace{0.5em}

\begin{tblr}{cells={valign=m,halign=c},row{1}={rowsep=4pt},row{2}={rowsep=4pt},row{3}={rowsep=4pt},row{4}={rowsep=4pt},hlines,vlines,column{1}={3.25cm}}
\SetCell[r=4]{} \includegraphics[height=2.8cm,valign=c]{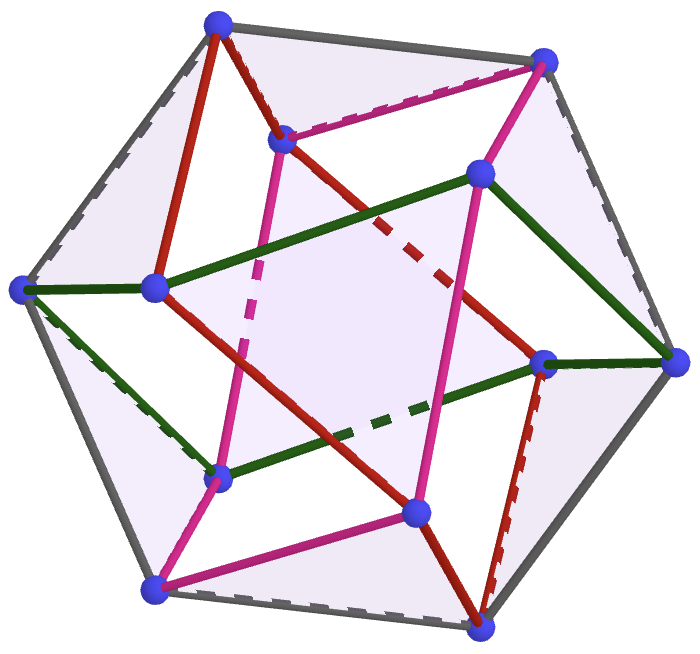}
    & $\mathcal{S}_{\{6,3\}_4}(\frac{1}{2},0,\frac{\sqrt{2}}{10})$ \\
    & $6_s.3.6_s.3$ \\
    & Crossed quadrilateral (butterfly) \\
    & $(f_0,f_1, f_2^1, f_2^2) = (12,24,4,8)$ \\ 
\end{tblr}

\vspace{0.5em}

\begin{tblr}{cells={valign=m,halign=c},row{1}={rowsep=4pt},row{2}={rowsep=4pt},row{3}={rowsep=4pt},row{4}={rowsep=4pt},hlines,vlines,column{1}={3.25cm}}
\SetCell[r=4]{} \includegraphics[height=2.8cm,valign=c]{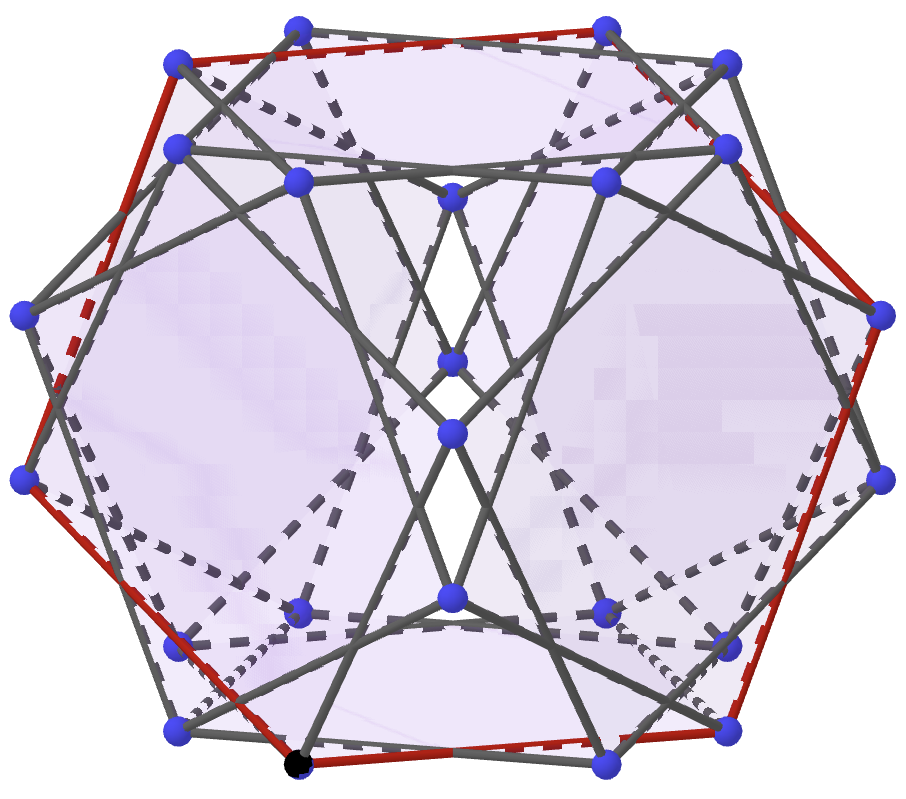}
    & $\mathcal{S}_{\{6,4\}_3}(\frac{1}{2},\frac{1}{10},-\frac{1}{2})$ \\
    & $6_s.4_c.6_s.4_c$ \\
    & Crossed quadrilateral (butterfly) \\
    & $(f_0,f_1, f_2^1, f_2^2) = (24,48,8,12)$ \\ 
\end{tblr}

\vspace{0.5em}

\begin{tblr}{cells={valign=m,halign=c},row{1}={rowsep=4pt},row{2}={rowsep=4pt},row{3}={rowsep=4pt},row{4}={rowsep=4pt},hlines,vlines,column{1}={3.25cm}}
\SetCell[r=4]{} \includegraphics[height=2.8cm,valign=c]{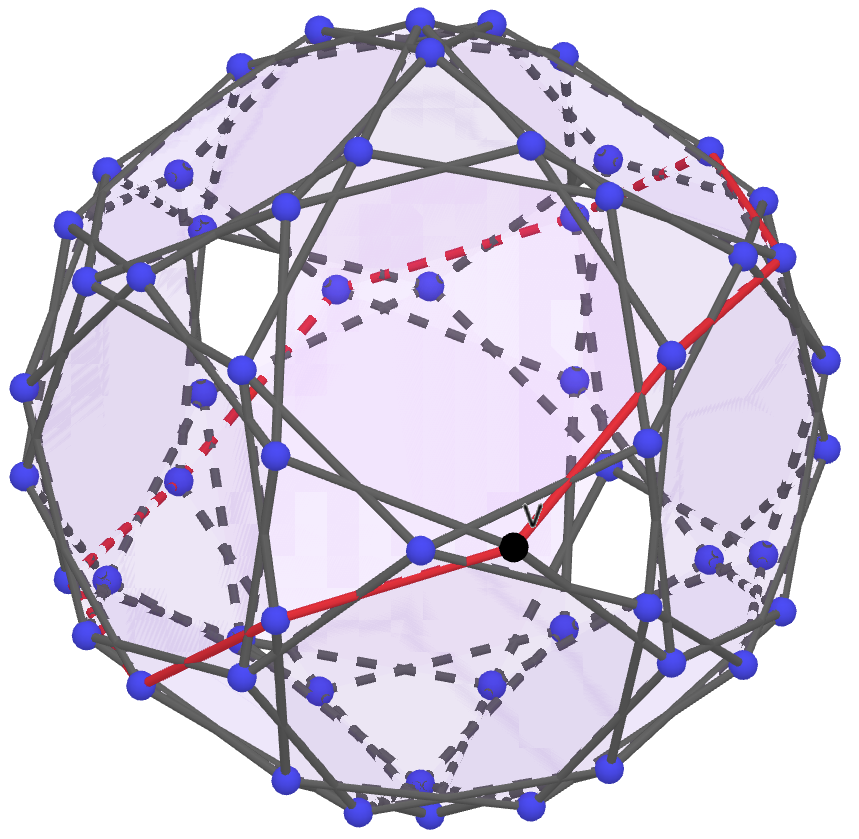}
    & $\mathcal{S}_{\{10,5\}}(\frac{1+\sqrt{5}}{2},\frac{3}{10},0)$ \\
    & $10_s.5_c.10_s.5_c$ \\
    & Crossed quadrilateral (butterfly) \\
    & $(f_0,f_1, f_2^1, f_2^2) = (60,120,12,24)$ \\ 
\end{tblr}

\end{table}

\begin{table}[H]
\centering

\begin{tblr}{cells={valign=m,halign=c},row{1}={rowsep=4pt},row{2}={rowsep=4pt},row{3}={rowsep=4pt},row{4}={rowsep=4pt},hlines,vlines,column{1}={3.25cm}}
\SetCell[r=4]{} \includegraphics[height=2.8cm,valign=c]{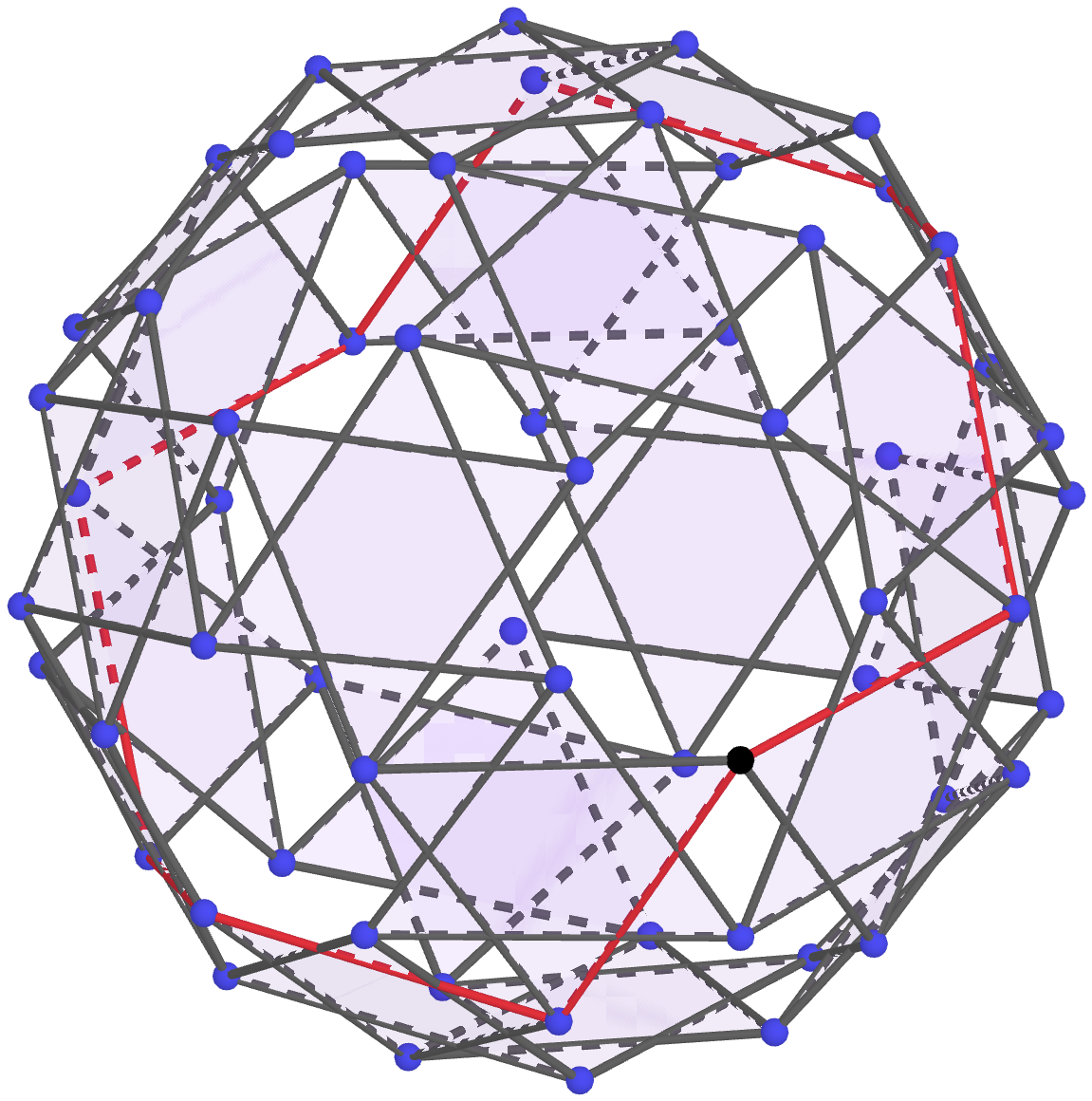}
    & $\mathcal{S}_{\{10,3\}}(1,0,\frac{1}{10})$ \\
    & $10_s.3.10_s.3$ \\
    & Crossed quadrilateral (butterfly) \\
    & $(f_0,f_1, f_2^1, f_2^2) = (60,120,12,40)$ \\ 
\end{tblr}

\vspace{0.5em}

\begin{tblr}{cells={valign=m,halign=c},row{1}={rowsep=4pt},row{2}={rowsep=4pt},row{3}={rowsep=4pt},row{4}={rowsep=4pt},hlines,vlines,column{1}={3.25cm}}
\SetCell[r=4]{} \includegraphics[height=2.8cm,valign=c]{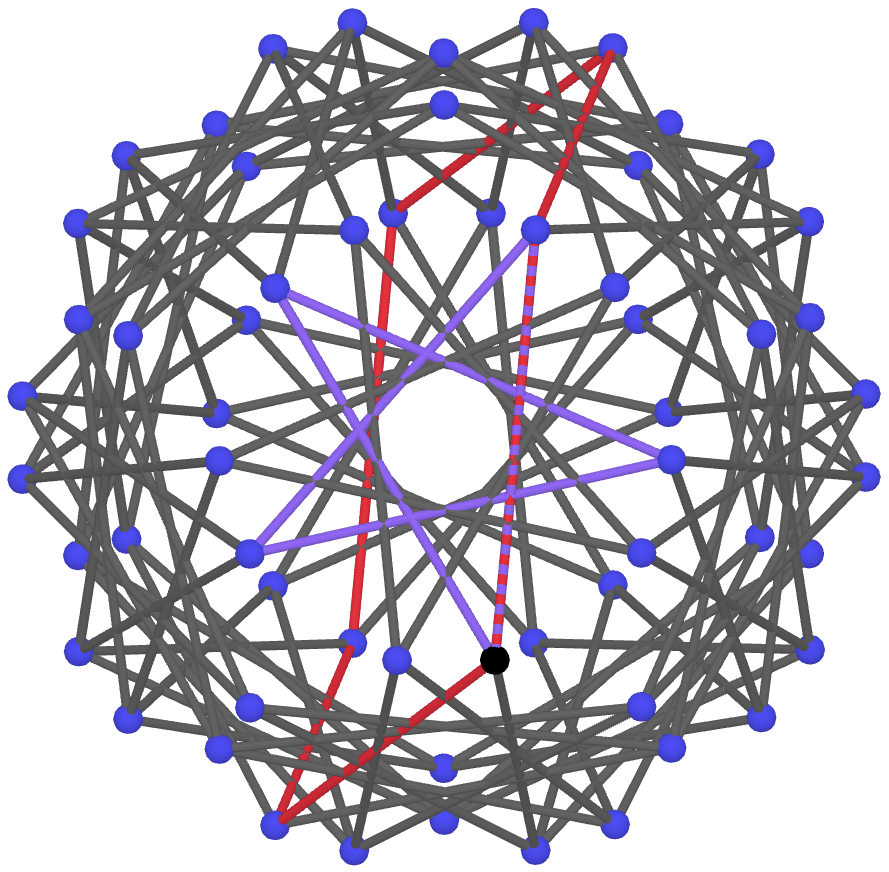}
    & $\mathcal{S}_{\{6,\frac{5}{2}\}}(1,\frac{1}{10},0)$ \\
    & $6_s.\frac{5}{2}.6_s.\frac{5}{2}$ \\
    & Crossed quadrilateral (butterfly) \\
    & $(f_0,f_1, f_2^1, f_2^2) = (60,120,20,24)$ \\ 
\end{tblr}

\vspace{0.5em}

\begin{tblr}{cells={valign=m,halign=c},row{1}={rowsep=4pt},row{2}={rowsep=4pt},row{3}={rowsep=4pt},row{4}={rowsep=4pt},hlines,vlines,column{1}={3.25cm}}
\SetCell[r=4]{} \includegraphics[height=2.8cm,valign=c]{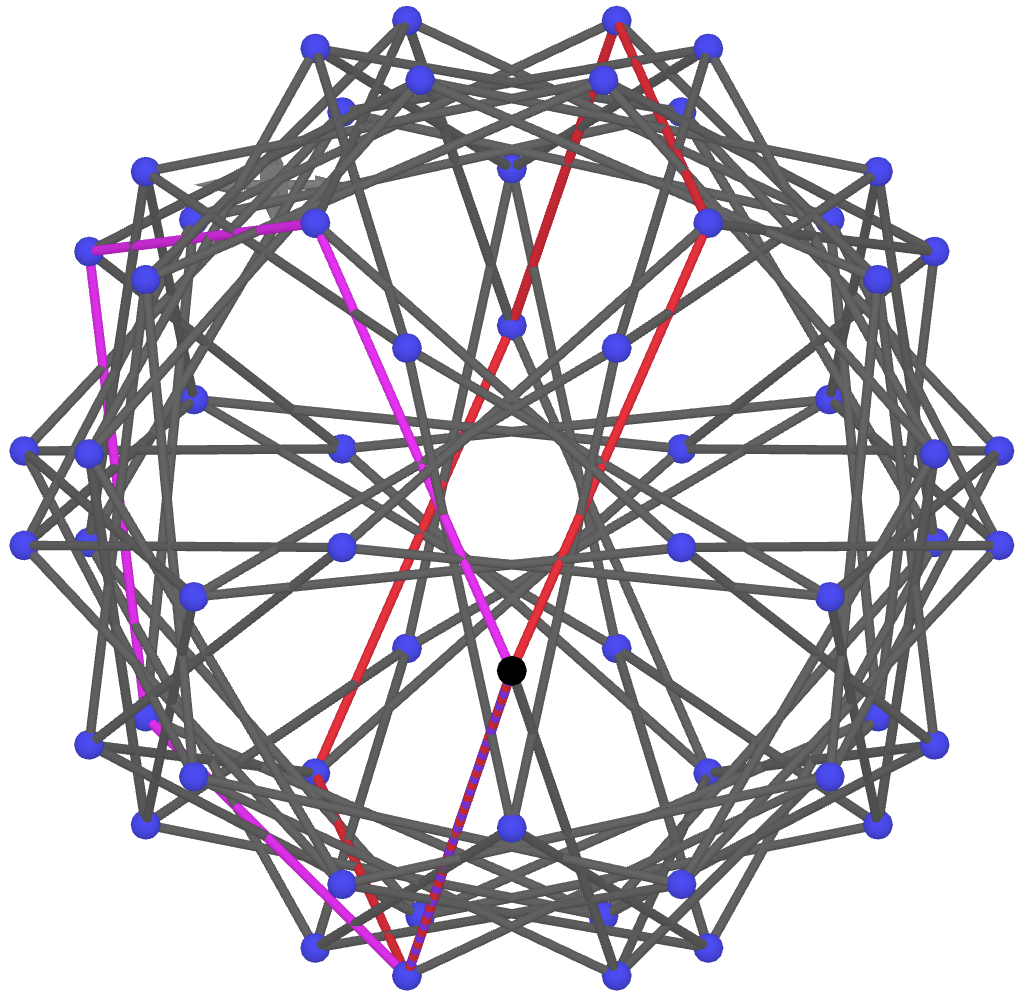}
    & $\mathcal{S}_{\{6,5\}}(1,0,\frac{1}{10})$ \\
    & $6_s.5_c.6_s.5_c$ \\
    & Crossed quadrilateral (butterfly) \\
    & $(f_0,f_1, f_2^1, f_2^2) = (60,120,20,24)$ \\ 
\end{tblr}

\vspace{0.5em}

\begin{tblr}{cells={valign=m,halign=c},row{1}={rowsep=4pt},row{2}={rowsep=4pt},row{3}={rowsep=4pt},row{4}={rowsep=4pt},hlines,vlines,column{1}={3.25cm}}
\SetCell[r=4]{} \includegraphics[height=2.8cm,valign=c]{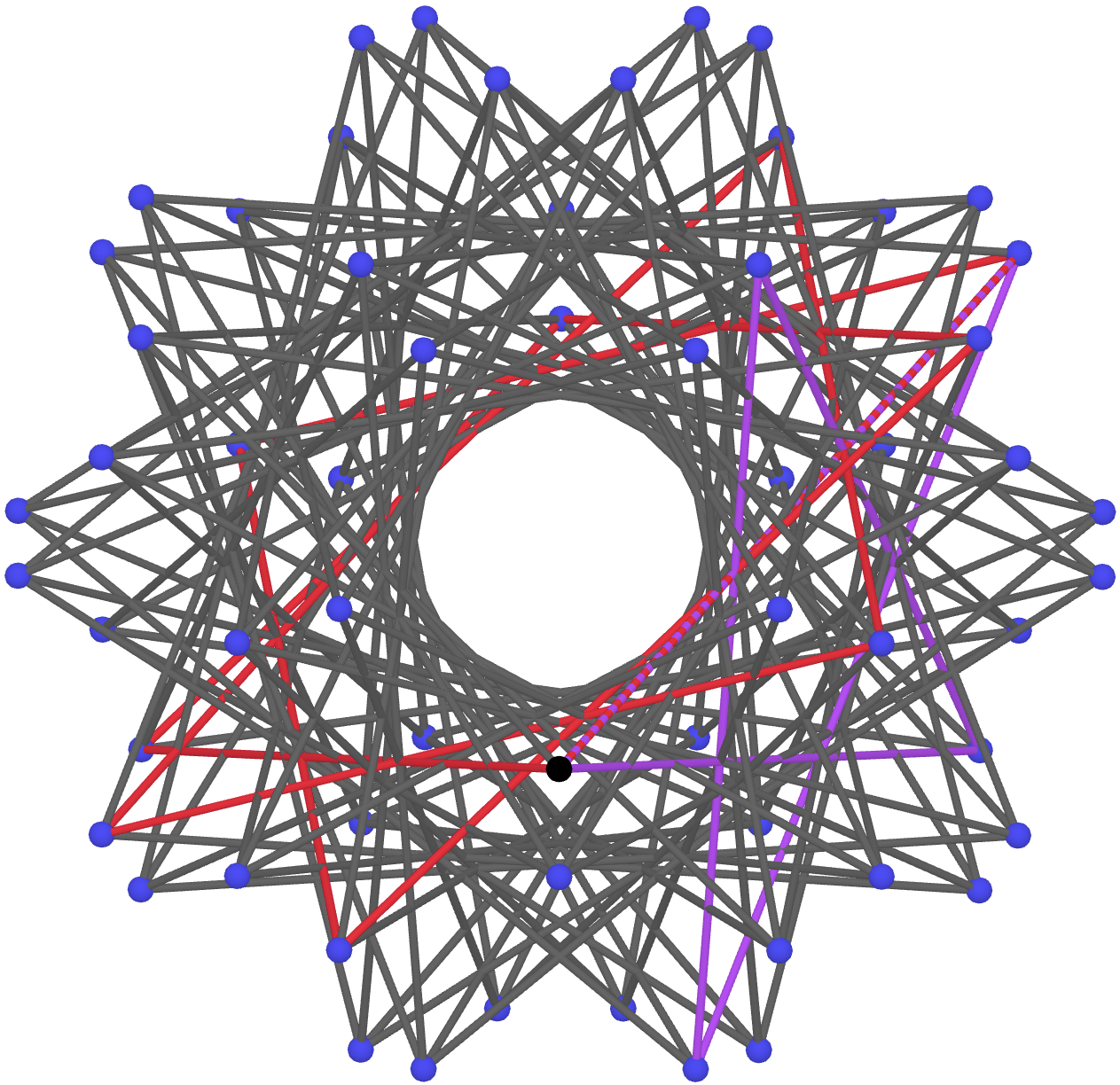}
    & $\mathcal{S}_{\{\frac{10}{3},\frac{5}{2}\}}(1,0,\frac{1}{10})$ \\
    & $(\frac{10}{3})_s.\frac{5}{2}.(\frac{10}{3})_s.\frac{5}{2}$ \\
    & Crossed quadrilateral (butterfly) \\
    & $(f_0,f_1, f_2^1, f_2^2) = (60,120,12,24)$ \\ 
\end{tblr}

\vspace{0.5em}

\begin{tblr}{cells={valign=m,halign=c},row{1}={rowsep=4pt},row{2}={rowsep=4pt},row{3}={rowsep=4pt},row{4}={rowsep=4pt},hlines,vlines,column{1}={3.25cm}}
\SetCell[r=4]{} \includegraphics[height=2.8cm,valign=c]{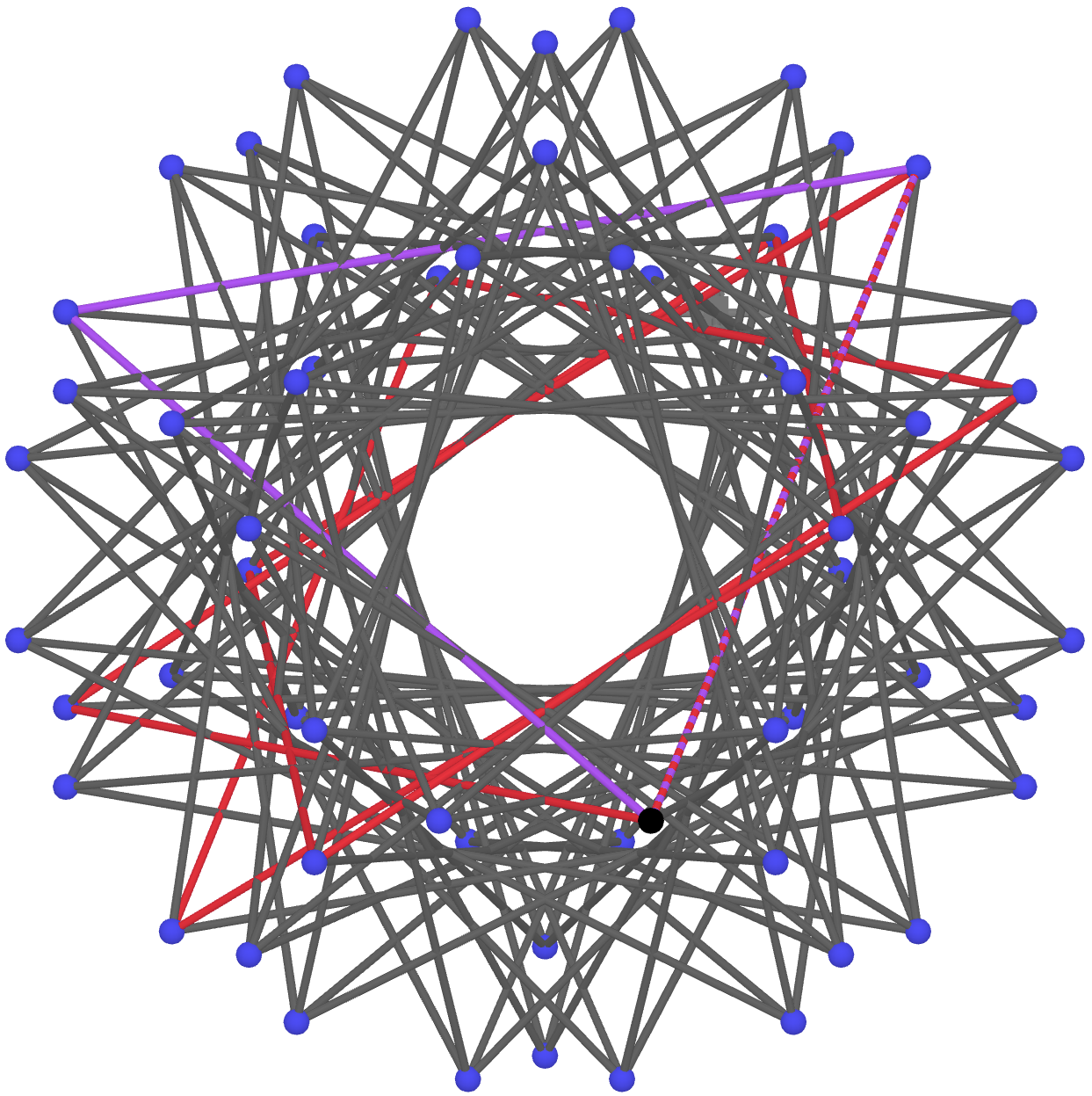}
    & $\mathcal{S}_{\{\frac{10}{3},3\}}(1,\frac{1}{10},0)$ \\
    & $(\frac{10}{3})_s.3.(\frac{10}{3})_s.3$ \\
    & Crossed quadrilateral (butterfly) \\
    & $(f_0,f_1, f_2^1, f_2^2) = (60,120,12,40)$ \\ 
\end{tblr}

\end{table}

\begin{figure}[H]
\centering
\begin{minipage}{.5\textwidth}
  \centering
  \includegraphics[height=3cm]{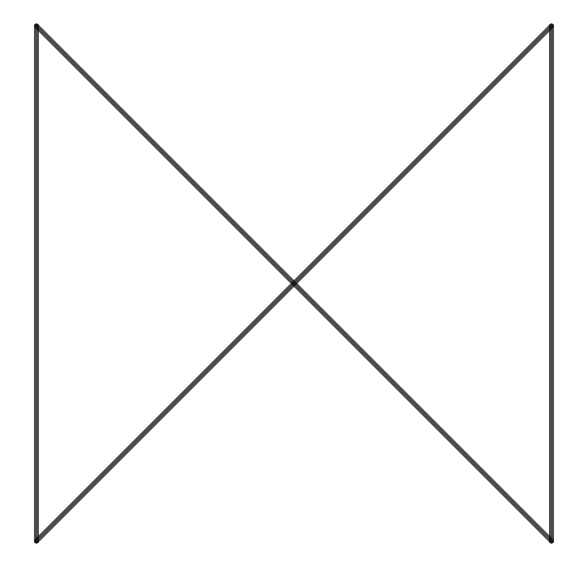}
  \captionof{figure}{Crossed quadrilateral (bow tie)}
  \label{fig:bow tie}
\end{minipage}%
\begin{minipage}{.5\textwidth}
  \centering
  \includegraphics[height=3cm]{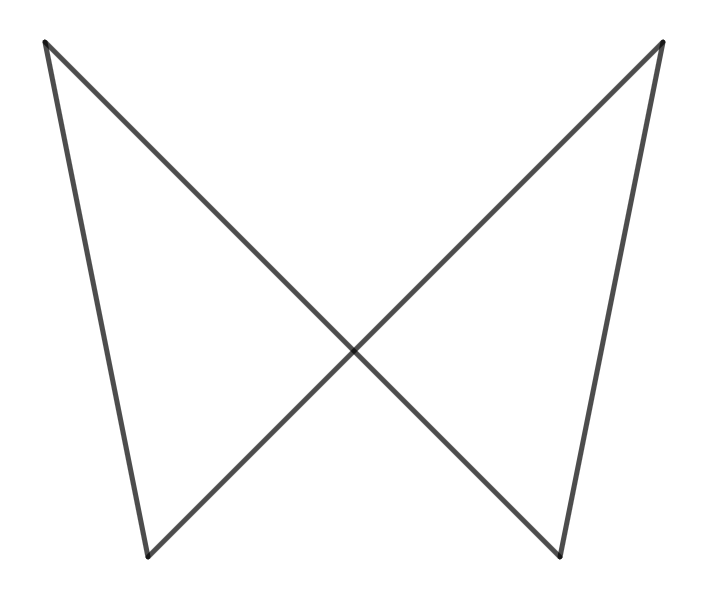}
  \captionof{figure}{Crossed quadrilateral (butterfly)}
  \label{fig:butterfly}
\end{minipage}
\end{figure}

\subsection*{Acknowledgment}
We are grateful to Gareth Jones and Marston Conder for helpful discussions with regards to the existence of two-fold smooth orientable regular coverings of non-orientable regular maps, and to the former for alerting us to his article~\cite{Jones}.

\subsection*{Data Availability Statement}
No new data were created or analyzed in this study. Data sharing is
not applicable to this article.

\end{document}